\crefname{thm}{Theorem}{Theorems}
\crefname{pro}{Proposition}{Propositions}
\crefname{lem}{Lemma}{Lemmas}
\crefname{rmk}{Remark}{Remarks}
\crefname{cor}{Corollary}{Corollaries}
\crefname{dfn}{Definition}{Definitions}
\crefname{ex}{Example}{Examples}
\crefname{section}{Section}{Sections}
\crefname{subsection}{Subsection}{Subsections}
\newcommand{\eps}{\varepsilon}
\newcommand{\To}{\rightarrow}
\newcommand{\as}{{\rm d}\mathbb{P}\times{\rm d} t-a.e.}
\newcommand{\ass}{{\rm d}\mathbb{P}\times{\rm d} s-a.e.}
\newcommand{\ps}{\mathbb{P}-a.s.}
\newcommand{\F}{\mathcal{F}}
\newcommand{\E}{\mathbb{E}}
\newcommand{\s}{\mathcal{S}}
\newcommand{\lcal}{\mathcal{L}}
\newcommand{\mcal}{\mathcal{M}}
\newcommand{\T}{[0,T]}
\newcommand{\R}{{\mathbb R}}
\newcommand{\RE}{\forall}
\newcommand {\Lim}{\lim\limits_{n\rightarrow\infty}}
\newcommand {\Dis}{\displaystyle}
\newtheorem{thm}{Theorem}[section]
\newtheorem{pro}[thm]{Proposition}
\newtheorem{rmk}[thm]{Remark}
\newtheorem{dfn}[thm]{Definition}
\newtheorem{ex}[thm]{Example}
\journal{}
\begin{document}
\begin{frontmatter}

\title{{Existence, uniqueness, comparison theorem and stability theorem for unbounded solutions of scalar BSDEs with sub-quadratic generators}\tnoteref{found}}
\tnotetext[found]{Shengjun Fan is supported by the State Scholarship Fund from the China Scholarship Council (No. 201806425013). Ying Hu is partially supported by Lebesgue center of mathematics ``Investissements d'avenir" program-ANR-11-LABX-0020-01, by CAESARS-ANR-15-CE05-0024 and by MFG-ANR-16-CE40-0015-01.\vspace{0.2cm}}


\author{Shengjun Fan\vspace{-0.5cm}\corref{cor1}}
\author{\ \ \ \ Ying Hu\corref{cor2}}
\cortext[cor1]{School of Mathematics, China University of Mining and Technology, Xuzhou 221116, China. E-mail: f\_s\_j@126.com \vspace{0.2cm}}

\cortext[cor2]{Univ. Rennes, CNRS, IRMAR-UMR6625, F-35000, Rennes, France. E-mail: ying.hu@univ-rennes1.fr}

\begin{abstract}
We first establish the existence of an unbounded solution to a backward stochastic differential equation (BSDE) with generator $g$ allowing a general growth in the state variable $y$ and a sub-quadratic growth in the state variable $z$, like $|z|^\alpha$ for some $\alpha\in (1,2)$, when the terminal condition satisfies a sub-exponential moment integrability condition like $\exp\left(\mu L^{2/\alpha^*}\right)$ for the conjugate $\alpha^*$ of $\alpha$ and a positive parameter $\mu>\mu_0$ with a certain value $\mu_0$, which is clearly weaker than the usual $\exp(\mu L)$ integrability and stronger than $L^p\ (p>1)$ integrability. Then, we prove the uniqueness and comparison theorem for the unbounded solutions of the preceding BSDEs under the additional assumptions that the terminal conditions have sub-exponential moments of any order and the generators are convex or concave in $(y,z)$. Afterwards, we extend the uniqueness and comparison theorem to the non-convexity and non-concavity case, and establish a general stability result for the unbounded solutions of the preceding BSDEs. Finally, with these tools in hands, we derive the nonlinear Feynman-Kac formula in this context.\vspace{0.2cm}
\end{abstract}

\begin{keyword}
Backward stochastic differential equation \sep Existence and uniqueness \sep \\ \hspace*{2cm} Sub-quadratic growth \sep $\exp\left(\mu L^{2 /\alpha^*}\right)$-integrability \sep Comparison theorem \sep \\
\hspace*{2cm} \vspace{0.2cm}Stability theorem \sep Feynman-Kac formula.

\MSC[2010] 60H10\vspace{0.2cm}
\end{keyword}

\end{frontmatter}
\vspace{-0.4cm}

\section{Notations and introduction}
\label{sec:1-Introduction}
\setcounter{equation}{0}

Let us fix a positive real number $T>0$ and a positive integer $d$, and let $x\cdot y$ represent the usual scalar inner product for $x,y\in \R^d$. Let $(B_t)_{t\in\T}$ be a standard $\R^d$-valued Brownian motion defined on some complete probability space $(\Omega, \F, \mathbb{P})$ with $(\F_t)_{t\in\T}$ being its natural filtration augmented by all $\mathbb{P}$-null sets of $\F$. All the measurability with respect to processes will refer to this filtration. Let us recall that a progressively measurable scalar process $(X_t)_{t\in\T}$ belongs to class (D) if the family of random variables $\{X_\tau\}$, $\tau$ running all $(\F_t)$-stopping times valued in $\T$, is uniformly integrable.

Denote $\R_+:=[0,+\infty)$, ${\bf 1}_A(x)=1$ when $x\in A$ otherwise $0$, and ${\rm sgn}(x):={\bf 1}_{x>0}-{\bf 1}_{x\leq 0}$. Let $a\wedge b$ denote the minimum of two real $a$ and $b$, $a^-:=-(a\wedge 0)$ and $a^+:=(-a)^-$. For each $\alpha\in (1,2)$, let $\alpha^*$ stand for the conjugate of $\alpha$, that is, $1/\alpha+1/\alpha^*=1$ or
$$
\alpha^*:=\frac{\alpha}{\alpha-1}>2.
$$
For any real $p\geq 1$, let $L^p$ represent the set of (equivalent classes of) all $\F_T$-measurable scalar random variables $\xi$ such that $\E[|\xi|^p]<+\infty$, $\lcal^p$ the set of (equivalent classes of) all progressively measurable scalar processes $(X_t)_{t\in\T}$ such that
$$
\|X\|_{\lcal^p}:=\left\{\E\left[\left(\int_0^T |X_t|{\rm d}t\right)^p\right]\right\}^{1/p}<+\infty,\vspace{0.1cm}
$$
$\s^p$ the set of (equivalent classes of) all progressively measurable and continuous scalar processes $(Y_t)_{t\in\T}$ such that
$$\|Y\|_{{\s}^p}:=\left(\E[\sup_{t\in\T} |Y_t|^p]\right)^{1/p}<+\infty,\vspace{0.1cm}$$
and $\mcal^p$ the set of (equivalent classes of) all progressively measurable $\R^d$-valued processes $(Z_t)_{t\in\T}$ such that
$$
\|Z\|_{\mcal^p}:=\left\{\E\left[\left(\int_0^T |Z_t|^2{\rm d}t\right)^{p/2}\right] \right\}^{1/p}<+\infty.
$$

We study the following backward stochastic differential equation (BSDE for short):
\begin{equation}\label{eq:1.1}
  Y_t=\xi+\int_t^T g(s,Y_s,Z_s){\rm d}s-\int_t^T Z_s \cdot {\rm d}B_s, \ \ t\in\T,
\end{equation}
where $\xi$ is an $\F_T$-measurable scalar random variable called the terminal condition, the function
$g(\omega, t, y, z):\Omega\times\T\times\R\times\R^d \mapsto \R $
is progressively measurable for each $(y,z)$ and continuous in $(y,z)$ called the generator, and the pair of processes $(Y_t,Z_t)_{t\in\T}$ valued in $\R\times\R^d$ is called the solution of eq. \eqref{eq:1.1}, which is progressively measurable such that $\ps$, $t\mapsto Y_t$ is continuous, $t\mapsto Z_t$ is square-integrable, $t\mapsto g(t,Y_t,Z_t)$ is integrable, and verifies \eqref{eq:1.1}.

In this paper, we always assume that $\xi$ is a terminal condition and $g$ is a generator which is continuous in $(y,z)$, and we use BSDE$(\xi,g)$ to denote the BSDE with terminal condition $\xi$ and generator $g$. We consider the BSDE with generator $g$ satisfying $\as$,
\begin{equation}\label{eq:1.2}
\RE\ (y,z)\in \R\times\R^d,\ \ \  |g(\omega,t,y,z)|\leq |g(\omega,t,0,0)|+\beta|y|+\gamma |z|^\alpha
\end{equation}
with $\alpha>0$, $\beta\geq 0$ and $\gamma>0$. We usually say that $g$ has a linear growth in the state variable $z$ when $\alpha=1$, a sub-linear growth in $z$ when $\alpha\in (0,1)$, a quadratic growth in $z$ when $\alpha=2$, a superquadratic growth in $z$ when $\alpha>2$, and a sub-quadratic growth in $z$ when $\alpha\in (1,2)$. Our attention focuses on the last case. Let us first recall some related results in previous four cases, which have been intensively studied. For narrative convenience, we denote $g_0:=g(\cdot,0,0)$.

Assume first that the generator $g$ has a linear growth in $(y,z)$, i.e., \eqref{eq:1.2} with $\alpha=1$ holds for $g$. It is well known that for $(\xi,g_0)\in L^p\times\lcal^p$ with some $p>1$, BSDE$(\xi,g)$ admits a solution in  $\s^p\times\mcal^p$, and the solution is unique when $g$ further satisfies the uniformly Lipschitz condition in $(y,z)$. Readers are referred to \citet{PardouxPeng1990SCL,ElKarouiPengQuenez1997MF,LepeltierSanMartin1997SPL,
BriandDelyonHu2003SPA} and \citet{FanJiang2012JAMC} for more details. Recently,  \citet{HuTang2018ECP,BuckdahnHuTang2018ECP} and \citet{FanHu2019ECP} extended this result and established the existence and uniqueness of an unbounded solution to BSDE$(\xi,g)$ with linear-growth generator $g$ by assuming that $(\xi,g_0)$ satisfies an $L\exp\left(\mu\sqrt{2\ln(1+L)}\right)$-integrability condition for $\mu\geq \gamma \sqrt{T}$, which is weaker than the usual $L^p\ (p>1)$ integrability and stronger than $L\ln L$ integrability.

Secondly, assume that the generator $g$ has a linear growth in $y$ and a sub-linear growth in $z$, i.e., eq. \eqref{eq:1.2} with $\alpha\in (0,1)$ is satisfied for $g$. It follows from \citet{BriandDelyonHu2003SPA} that for $(\xi,g_0)\in L^1\times\lcal^1$, BSDE$(\xi,g)$ admits a solution $(Y_\cdot,Z_\cdot)$ such that $Y_\cdot$ belongs to class (D), and the solution is unique when $g$ further satisfies the uniformly Lipschitz condition in $(y,z)$. See for example \citet{BriandHu2006PTRF} and \citet{Fan2016SPA,Fan2018JOTP} for more details on this topic.

Thirdly, from \citet{DelbaenHuBao2011PTRF} it is well known that superquadratic
BSDEs, i.e., \cref{eq:1.2} with $\alpha>2$ holds for the generator $g$, are not solvable in general.
Some solvability results under the Markovian setting can be founded in \citet{DelbaenHuBao2011PTRF},  \citet{MasieroRichou2013EJP}, \citet{Richou2012SPA} and \citet{CheriditoNam2014JFA}.

Finally, we assume that the generator $g$ has a linear growth in $y$ and a quadratic growth in $z$, i.e., eq. \eqref{eq:1.2} with $\alpha=2$ is satisfied for $g$. It is also well known from \citet{Kobylanski2000AP} that if both $\xi$ and $\int_0^T |g_0| {\rm d}t$ are bounded, then BSDE$(\xi,g)$ admits a solution $(Y_\cdot,Z_\cdot)$ such that $Y_\cdot$ is a bounded process and $Z_\cdot\in \mcal^2$, and the solution is unique if $g$ further satisfies the uniformly Lipschitz condition in $y$ and a locally Lipschitz condition in $z$. Readers are referred to  \citet{BriandElie2013SPA,HuTang2016SPA} and \citet{Fan2016SPA} for further research on the bounded solution of quadratic BSDEs. Later, \citet{BriandHu2006PTRF,BriandHu2008PTRF} and \citet{DelbaenHuRichou2011AIHPPS} extended this result and established the existence and uniqueness of an unbounded solution to BSDE$(\xi,g)$ with generator $g$ having a quadratic growth in $z$ by assuming that $(\xi,g_0)$ has only $\gamma e^{\beta T}$-order exponential moment integrability, where the generator $g$ need to be uniformly Lipschitz with respect to the variable $y$ and convex (concave) with respect to the variable $z$ for the uniqueness of the solution, see also \citet{BarrieuKaroui2013AoP} for more details.

In this paper, we study the existence, uniqueness, comparison theorem and stability theorem for unbounded solutions of BSDE$(\xi,g)$ with generator $g$ having a linear growth in $y$ and a sub-quadratic growth in $z$, namely, eq. \eqref{eq:1.2} with $\alpha\in (1,2)$ is satisfied for $g$. We prove that if $(\xi,g_0)$ satisfies an $\exp\left(\mu L^{2 /\alpha^*}\right)$-integrability condition for a positive parameter $\mu>\mu_0$ with a certain value $\mu_0$, which is clearly weaker than the usual $\exp(\mu L)$ integrability and stronger than $L^p\ (p>1)$ integrability, then BSDE$(\xi,g)$ admits a solution $(Y_\cdot,Z_\cdot)$ such that $Y_\cdot$ belongs to class (D) and $Z_\cdot\in \mcal^2$, and the solution is unique and the comparison theorem and stability theorem hold when $g$ further satisfies a (extended) convexity or concavity condition with respect to the variables $(y,z)$, and $(\xi,g_0)$ satisfies the $\exp\left(\mu L^{2/ \alpha^*}\right)$-integrability condition for all $\mu>0$. We remark that in our final results, the linear growth condition of the generator $g$ in $y$ is also weakened to a one-sided growth condition, see \ref{H1"} in \cref{rmk:3.5} at the end of \cref{sec:3-Main results}.

The paper is organized as follows. In the next section, we introduce the whole idea of this paper and make an important preparation (see \cref{Pro:2.1}) for the proof of the main results. In Section 3 we establish the existence result and in Section 4 we prove the uniqueness and comparison theorem under the convexity or concavity condition of the generator. Afterwards, in Section 5 we extend the uniqueness and comparison theorem to the non-convexity and non-concavity case, and in Section 6 we establish a stability result for unbounded solutions for the preceding BSDEs under general assumptions. Finally, with these tools in hands, in Section 7 we derive the nonlinear Feynman-Kac formula in this context.\vspace{-0.1cm}

\section{The whole idea}
\label{sec:2-Whole idea}
\setcounter{equation}{0}

For the existence of an unbounded solution to BSDE$(\xi,g)$ with generator $g$ satisfying \eqref{eq:1.2} with $\alpha\in (1,2)$, our whole strategy is to establish some uniform a priori estimate on the first process $Y^{n,p}_\cdot$ in the solution of the usual approximated BSDEs (see the definition in \eqref{eq:3.11} of \cref{sec:3-Main results}) and to apply the localization procedure put forward initially in \citet{BriandHu2006PTRF}. In order to obtain the a priori estimate, the idea consists in searching for an appropriate function $\phi(s,x)$ and applying It\^{o}-Tanaka's formula to $\phi(s,|Y^{n,p}_s|)$ on the time interval $s\in [t,\tau_m]$ with $(\F_t)$-stopping time $\tau_m$ valued in $[t,T]$ for $t\in \T$. More precisely, we need to find a positive real number $\delta>0$, and a positive and smooth function $\phi(s,x):\T\times\R_+\To \R_+$ satisfying that $\phi_x(s,x)>0$, $\phi_{xx}(s,x)>\delta$ and
\begin{equation}\label{eq:2.1}
\begin{array}{c}
\Dis -\phi_x(s,x) \left(\beta x+\gamma |z|^\alpha\right) +{1\over 2}(\phi_{xx}(s,x)-\delta)|z|^2+\phi_s(s,x)\geq 0, \vspace{0.1cm}\\
\Dis \ (s,x,z)\in \T\times \R_+\times\R^d,
\end{array}
\end{equation}
where and hereafter, $\phi_s(\cdot,\cdot)$ stands for the first-order partial derivative of $\phi(\cdot,\cdot)$ with respect to the first variable, and $\phi_x(\cdot,\cdot)$ and $\phi_{xx}(\cdot,\cdot)$ respectively the first-order and second order partial derivative of $\phi(\cdot,\cdot)$ with respect to the second variable.

Observe from Young's inequality that
$$
{\gamma\phi_x(s,x)\over \phi_{xx}(s,x)-\delta} |z|^\alpha\leq c_{\alpha,\gamma} \left({\phi_x(s,x)\over \phi_{xx}(s,x)-\delta}\right)^{\frac{2}{2-\alpha}}+\frac{1}{2}|z|^2
$$
and then
$$
\begin{array}{lll}
&\Dis -\gamma \phi_x(s,x)|z|^\alpha+{1\over 2}(\phi_{xx}(s,x)-\delta)|z|^2 \vspace{0.3cm}\\
=& \Dis (\phi_{xx}(s,x)-\delta)\left(-{\gamma\phi_x(s,x)\over \phi_{xx}(s,x)-\delta}|z|^\alpha+{1\over 2}|z|^2\right)\vspace{0.3cm}\\
\geq & \Dis -c_{\alpha,\gamma}{\left(\phi_x(s,x)\right)^{\frac{2}{2-\alpha}}\over \left(\phi_{xx}(s,x)-\delta\right)^{\frac\alpha{2-\alpha}}},
\end{array}
$$
where
$$
c_{\alpha,\gamma}:=\frac{2-\alpha}{2}\alpha^{\frac\alpha{2-\alpha}}
\gamma^{\frac{2}{2-\alpha}}.
$$
It is clear that \eqref{eq:2.1} holds if the function $\phi(\cdot,\cdot)$ satisfies the following condition:
\begin{equation}\label{eq:2.2}
-\beta\phi_x(s,x) x -c_{\alpha,\gamma}{\left(\phi_x(s,x)\right)^{\frac{2}{2-\alpha}}\over \left(\phi_{xx}(s,x)-\delta\right)^{\frac\alpha{2-\alpha}}}+\phi_s(s,x)\geq 0,\ \ (s,x)\in \T\times\R_+.\vspace{0.2cm}
\end{equation}

Now, let $\mu_s:\T\To\R_+$ be a nondecreasing and continuously differentiable function with $\mu_0=\eps$ for some $\eps>0$, and let\vspace{-0.1cm}
\begin{equation}\label{eq:2.3}
k_{\alpha,\eps}:=\left(\frac{(1+\eps)^{\frac{2-\alpha}\alpha}}{2(\alpha-1) \eps \left((1+\eps)^{\frac{2-\alpha}\alpha}-1 \right)} \right)^{\frac\alpha{2(\alpha-1)}}.
\end{equation}
We choose the following function
\begin{equation}\label{eq:2.4}
\phi(s,x;\eps):=\exp\left(\mu_s \left(x+k_{\alpha,\eps}\right)^{\frac{2}{\alpha^*}}\right)=\exp\left(\mu_s \left(x+k_{\alpha,\eps}\right)^{\frac{2(\alpha-1)}{\alpha}}\right), \ \ \ (s,x)\in \T\times \R_+
\end{equation}
to explicitly solve the inequality \eqref{eq:2.2}. For each $(s,x)\in \T\times \R_+$, a simple computation gives
\begin{equation}\label{eq:2.5}
\phi_x(s,x;\eps)=\phi(s,x;\eps)\frac{2(\alpha-1)\mu_s}{\alpha(x+k_{\alpha,\eps})^{\frac{2-\alpha}
\alpha}}>0,
\end{equation}
\begin{equation}\label{eq:2.6}
\phi_{xx}(s,x;\eps)=\phi(s,x;\eps)\frac{2(\alpha-1)\mu_s
\left[2(\alpha-1)\mu_s(x+k_{\alpha,\eps})^{
\frac{2(\alpha-1)}\alpha}-1+(\alpha-1)\right]}{\alpha^2(x+k_{\alpha,\eps})^{
\frac{2}\alpha}}>0
\end{equation}
and
\begin{equation}\label{eq:2.7}
\phi_s(s,x;\eps)=\phi(s,x;\eps)(x+k_{\alpha,\eps})^{\frac{2(\alpha-1)}
\alpha}\mu'_s>0.\vspace{0.1cm}
\end{equation}
Furthermore, for each $(s,x)\in \T\times \R_+$, in view of the fact of $\mu_s\geq \mu_0=\eps$ and \eqref{eq:2.3}, we have
\begin{equation}\label{eq:2.8}
2(\alpha-1)\mu_s(x+k_{\alpha,\eps})^{
\frac{2(\alpha-1)}\alpha}\geq 2(\alpha-1)\eps \left(k_{\alpha,\eps}\right)^{
\frac{2(\alpha-1)}\alpha}\geq \frac{(1+\eps)^{\frac{2-\alpha}\alpha}}{(1+\eps)^{\frac{2-\alpha}\alpha}-1}
\end{equation}
and
\begin{equation}\label{eq:2.9}
\phi(s,x;\eps)\frac{2(\alpha-1)^2\mu_s}{\alpha^2(x+k_{\alpha,\eps})^{
\frac{2}\alpha}}\geq \Dis \exp\left(\eps \left(x+k_{\alpha,\eps}\right)^{
\frac{2(\alpha-1)}\alpha}\right)\frac{2(\alpha-1)^2\eps}{\alpha^2(x+k_{\alpha,\eps})^{
\frac{2}\alpha}}.\vspace{0.2cm}
\end{equation}
Observe that the function in the right hand side of \eqref{eq:2.9} is positive and continuous in $\R_+$, and tends to infinity as $x\To +\infty$. It follows that there exists a constant $\delta_{\alpha,\eps}>0$ depending only on $(\alpha,\eps)$ such that
\begin{equation}\label{eq:2.10}
\phi(s,x;\eps)\frac{2(\alpha-1)^2\mu_s}{\alpha^2(x+k_{\alpha,\eps})^{
\frac{2}\alpha}}\geq \delta_{\alpha,\eps},\ \ \ (s,x)\in \T\times \R_+.\vspace{0.2cm}
\end{equation}
Combining \eqref{eq:2.6}, \eqref{eq:2.8} and \eqref{eq:2.10} yields that for each $(s,x)\in \T\times \R_+$,\vspace{0.2cm}
\begin{equation}\label{eq:2.11}
\begin{array}{lll}
\Dis \phi_{xx}(s,x;\eps)-\delta_{\alpha,\eps}&\geq &\Dis \phi(s,x;\eps)\frac{2(\alpha-1)\mu_s\left[2(\alpha-1)\mu_s(x+k_{\alpha,\eps})^{
\frac{2(\alpha-1)}\alpha}\right]}{(1+\eps)^{\frac{2-\alpha}\alpha}\alpha^2
(x+k_{\alpha,\eps})^{
\frac{2}\alpha}}\vspace{0.2cm}\\
&=& \Dis \phi(s,x;\eps)\frac{4
(\alpha-1)^2\mu_s^2}{(1+\eps)^{\frac{2-\alpha}\alpha}\alpha^2
(x+k_{\alpha,\eps})^{\frac{2(2-\alpha)}\alpha}}.\vspace{0.2cm}
\end{array}
\end{equation}

In the sequel, we substitute \eqref{eq:2.5}, \eqref{eq:2.7} and \eqref{eq:2.11} into the left side of \eqref{eq:2.2} with $\delta=\delta_{\alpha,\eps}$ to obtain that for each $(s,x)\in \T\times\R_+$,
$$
\begin{array}{ll}
&\Dis -\beta\phi_x(s,x;\eps) x -c_{\alpha,\gamma}{\left(\phi_x(s,x;\eps)\right)^{\frac{2}{2-\alpha}}\over \left(\phi_{xx}(s,x;\eps)-\delta_{\alpha,\eps}\right)^{\frac\alpha{2-\alpha}}}
+\phi_s(s,x)\vspace{0.2cm}\\
\geq & \Dis -\beta\frac{2(\alpha-1)\mu_s\phi(s,x;\eps)(x+k_{\alpha,\eps})}{\alpha
(x+k_{\alpha,\eps})^{\frac{2-\alpha}
\alpha}}-c_{\alpha,\gamma}\frac{\left(\frac{2(\alpha-1)\mu_s}\alpha\phi(s,x;\eps)
\right)^{\frac{2}{2-\alpha}}(x+k_{\alpha,\eps})^{\frac{2(\alpha-1)}\alpha}}
{\left(\frac{4(\alpha-1)^2\mu_s^2}
{(1+\eps)^{\frac{2-\alpha}\alpha}\alpha^2}
\phi(s,x;\eps)\right)^{\frac\alpha{2-\alpha}}}\vspace{0.1cm}\\
& \Dis +\phi(s,x;\eps)
(x+k_{\alpha,\eps})^{\frac{2(\alpha-1)}
\alpha}\mu'_s\vspace{0.2cm}\\
=& \Dis  \phi(s,x;\eps)
(x+k_{\alpha,\eps})^{\frac{2(\alpha-1)}
\alpha}\left(-\frac{2(\alpha-1)\beta}\alpha\mu_s-c_{\alpha,\gamma}
\frac{1+\eps}{\left(\frac{2(\alpha-1)}\alpha\mu_s \right)^{\frac{2(\alpha-1)}{2-\alpha}}}+\mu'_s\right).\vspace{0.1cm}
\end{array}
$$
Thus, \eqref{eq:2.2} holds if the function $\mu_s, s\in \T$ satisfies the following ODE:
\begin{equation}\label{eq:2.12}
\mu'_s=\frac{2(\alpha-1)\beta}\alpha\mu_s+C_{\alpha,\gamma}\frac{1+\eps}
{\mu_s^{\frac{2(\alpha-1)}{2-\alpha}}},\ \ s\in \T
\end{equation}
with $\mu_0=\eps$ and
$$
C_{\alpha,\gamma}:=\frac{c_{\alpha,\gamma}}{\left(\frac{2(\alpha-1)}\alpha \right)^{\frac{2(\alpha-1)}{2-\alpha}}}=\frac{(2-\alpha)
\alpha^{\frac\alpha{2-\alpha}}
}{2\left(\frac{2(\alpha-1)}\alpha\right)^{\frac{2(\alpha-1)}{2-\alpha}}}
\gamma^{\frac{2}{2-\alpha}}.\vspace{0.2cm}
$$

It remains to solve ODE \eqref{eq:2.12}. We need to distinguish two different cases $\beta=0$ and $\beta>0$. We first consider the case of $\beta=0$. After separating variables for \eqref{eq:2.12} we integrate on the time interval $[0,s]$ to get that
$$
\frac{2-\alpha}\alpha\left(\mu_s^{\frac\alpha{2-\alpha}}-
\eps^{\frac\alpha{2-\alpha}}\right)=\left.\frac{2-\alpha}\alpha
\mu_r^{\frac\alpha{2-\alpha}}\right|_0^s =\int_0^s \mu_r^{\frac{2(\alpha-1)}{2-\alpha}} {\rm d}\mu_r =C_{\alpha,\gamma}(1+\eps)s
$$
and then
$$
\mu_s=\left(\frac\alpha{2-\alpha}C_{\alpha,\gamma}(1+\eps)s+
\eps^{\frac\alpha{2-\alpha}}
\right)^{\frac{2-\alpha}\alpha}, \ \ s\in \T.\vspace{0.2cm}
$$
For the case of $\beta>0$, after separating variables for \eqref{eq:2.12} again we integrate on interval $[0,s]$ to get that\vspace{0.1cm}
$$
\left.\frac{2-\alpha}{2(\alpha-1)\beta}\ln\left(\frac{2(\alpha-1)\beta}\alpha
\mu_r^{\frac\alpha{2-\alpha}}+C_{\alpha,\gamma}(1+\eps)\right)\right|_0^s   =\int_0^s \frac{\mu_r^{\frac{2(\alpha-1)}{2-\alpha}}{\rm d}\mu_r}{\frac{2(\alpha-1)\beta}\alpha
\mu_r^{\frac\alpha{2-\alpha}}+C_{\alpha,\gamma}(1+\eps)}=s
$$
and then
$$
\mu_s=\left\{\left(
\eps^{\frac\alpha{2-\alpha}}+\frac{C_{\alpha,\gamma}\alpha
(1+\eps)}{2(\alpha-1)\beta}\right)
\exp\left(\frac{2(\alpha-1)\beta}{2-\alpha}s\right)-\frac{C_{\alpha,\gamma}\alpha
(1+\eps)}{2(\alpha-1)\beta}\right\}^{\frac{2-\alpha}\alpha},\ \ s\in\T.\vspace{0.3cm}
$$

We summarize the preceding arguments into the following proposition, which will play a crucial role in the proof of the main results of this paper later.

\begin{pro}\label{Pro:2.1}
Given $\alpha\in (1,2)$ and $\beta,\gamma>0$. For each $(s,x)\in \T\times\R_+$ and $\eps>0$, define
\begin{equation}\label{eq:2.13}
\tilde \varphi(s,x;\eps):=\exp\left(\tilde\mu_{\alpha,\gamma,\eps}(s) \left(x+k_{\alpha,\eps}\right)^{\frac{2}{\alpha^*}}\right)=\exp\left(\tilde\mu_{\alpha,\gamma,\eps}(s) \left(x+k_{\alpha,\eps}\right)^{\frac{2(\alpha-1)}{\alpha}}\right)
\end{equation}
and
\begin{equation}\label{eq:2.14}
\bar \varphi(s,x;\eps):=\exp\left(\bar\mu_{\alpha,\beta,\gamma,\eps}(s) \left(x+k_{\alpha,\eps}\right)^{\frac{2}{\alpha^*}}\right)=\exp\left(\bar\mu_{\alpha,\beta,\gamma,\eps}(s) \left(x+k_{\alpha,\eps}\right)^{\frac{2(\alpha-1)}{\alpha}}\right),
\end{equation}
where
\begin{equation}\label{eq:2.15}
\tilde \mu_{\alpha,\gamma,\eps}(s):=\left(\tilde c_{\alpha,\gamma}(1+\eps)s+\eps^{\frac\alpha{2-\alpha}}
\right)^{\frac{2-\alpha}\alpha},\ \ \ k_{\alpha,\eps}:=\left(\frac{(1+\eps)^{\frac{2-\alpha}\alpha}}{2(\alpha-1) \eps \left((1+\eps)^{\frac{2-\alpha}\alpha}-1 \right)} \right)^{\frac\alpha{2(\alpha-1)}}
\end{equation}
and
\begin{equation}\label{eq:2.16}
\bar \mu_{\alpha,\beta,\gamma,\eps}(s):=\left\{\left(
\eps^{\frac\alpha{2-\alpha}}+(1+\eps)\bar c_{\alpha,\beta,\gamma}\right)
\exp\left(\frac{2(\alpha-1)\beta}{2-\alpha}s\right)-(1+\eps)\bar c_{\alpha,\beta,\gamma}\right\}^{\frac{2-\alpha}\alpha}
\end{equation}
with
\begin{equation}\label{eq:2.17}
\tilde c_{\alpha,\gamma}:=\frac{
(\alpha\gamma)^{\frac{2}{2-\alpha}}
}{2\left(\frac{2(\alpha-1)}\alpha\right)^{\frac{2(\alpha-1)}{2-\alpha}}}\ \ \ {\rm and}\ \ \
\bar c_{\alpha,\beta,\gamma}:=\frac{(2-\alpha)(\alpha\gamma)^{\frac{2}{2-\alpha}}
}{4\alpha\beta\left(\frac{2(\alpha-1)}\alpha\right)^{\frac{2(\alpha-1)}{2-\alpha}}}.\vspace{0.2cm}
\end{equation}
Then, there exists a constant $\delta_{\alpha,\eps}>0$ depending only on $(\alpha,\eps)$ such that for each $(s,x,z)\in \T\times \R_+\times\R^d$, we have
\begin{equation}\label{eq:2.18}
-\gamma \tilde\varphi_x(s,x;\eps)|z|^\alpha+{1\over 2}(\tilde\varphi_{xx}(s,x;\eps)-\delta_{\alpha,\eps})|z|^2+\tilde\varphi_s(s,x;\eps)\geq 0
\end{equation}
and
\begin{equation}\label{eq:2.19}
-\bar\varphi_x(s,x;\eps) \left(\beta x+\gamma |z|^\alpha\right) +{1\over 2}(\bar\varphi_{xx}(s,x;\eps)-\delta_{\alpha,\eps})|z|^2+\bar\varphi_s(s,x;\eps)\geq 0.\vspace{0.3cm}
\end{equation}
\end{pro}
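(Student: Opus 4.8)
The plan is to realize both $\tilde\varphi$ and $\bar\varphi$ as instances of the function $\phi(s,x;\eps)$ from \eqref{eq:2.4}, with $\mu_s$ taken to be the explicit solution of the ODE \eqref{eq:2.12} --- the $\beta=0$ solution for $\tilde\varphi$ and the $\beta>0$ solution for $\bar\varphi$ --- and then to traverse, in rigorous form, the chain of reductions already set out in this section. First, I would fix $\mu_\cdot$ to be a nondecreasing, continuously differentiable function with $\mu_0=\eps$ (to be pinned down later) and record the elementary differentiations \eqref{eq:2.5}--\eqref{eq:2.7}, which follow directly from the exponential form of $\phi$. The pivotal step is the construction of $\delta_{\alpha,\eps}$: using only $\mu_s\ge\mu_0=\eps$ together with the precise value of $k_{\alpha,\eps}$ in \eqref{eq:2.15}, one obtains the lower bounds \eqref{eq:2.8} and \eqref{eq:2.9}; since the function on the right-hand side of \eqref{eq:2.9} is continuous, strictly positive on $\R_+$ and tends to $+\infty$ as $x\to+\infty$, it has a strictly positive infimum over $x\in\R_+$, and this infimum --- which depends only on $(\alpha,\eps)$ since $k_{\alpha,\eps}$ does, and is independent of $s$ and of $\mu_\cdot$ because \eqref{eq:2.9} used nothing beyond $\mu_s\ge\eps$ --- is the desired $\delta_{\alpha,\eps}$, giving \eqref{eq:2.10}. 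Combining \eqref{eq:2.6}, \eqref{eq:2.8} and \eqref{eq:2.10} yields the key lower bound \eqref{eq:2.11}, which in particular guarantees $\phi_{xx}(s,x;\eps)-\delta_{\alpha,\eps}>0$ on $\T\times\R_+$.

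With positivity of $\phi_{xx}-\delta_{\alpha,\eps}$ in hand, the Young-type estimate displayed just after \eqref{eq:2.1} is legitimate, and it reduces \eqref{eq:2.18} (respectively \eqref{eq:2.19}) to verifying \eqref{eq:2.2} for $\phi=\tilde\varphi$ with the $\beta$-term dropped (respectively for $\phi=\bar\varphi$). Substituting \eqref{eq:2.5}, \eqref{eq:2.7} and \eqref{eq:2.11} into the left-hand side of \eqref{eq:2.2} with $\delta=\delta_{\alpha,\eps}$ and simplifying, every factor $\phi(s,x;\eps)$ and every power of $(x+k_{\alpha,\eps})$ cancels, and \eqref{eq:2.2} is seen to hold provided $\mu_s$ satisfies the scalar ODE \eqref{eq:2.12} with $\mu_0=\eps$ (with the $\beta$-term omitted in the $\tilde\varphi$ case).

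It then remains to solve \eqref{eq:2.12} in closed form. The substitution $u_s:=\mu_s^{\alpha/(2-\alpha)}$ turns \eqref{eq:2.12} into the linear first-order ODE $u_s'=\tfrac{2(\alpha-1)\beta}{2-\alpha}\,u_s+\tfrac{\alpha}{2-\alpha}C_{\alpha,\gamma}(1+\eps)$ with $u_0=\eps^{\alpha/(2-\alpha)}$: for $\beta=0$, direct integration over $[0,s]$ gives $u_s=\eps^{\alpha/(2-\alpha)}+\tilde c_{\alpha,\gamma}(1+\eps)s$, hence $\mu_s=\tilde\mu_{\alpha,\gamma,\eps}(s)$ of \eqref{eq:2.15}; for $\beta>0$, the variation-of-constants formula gives $\mu_s=\bar\mu_{\alpha,\beta,\gamma,\eps}(s)$ of \eqref{eq:2.16} once the constant appearing in the bracket is identified with $(1+\eps)\bar c_{\alpha,\beta,\gamma}$ from \eqref{eq:2.17}. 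Finally one checks that $\tilde\mu_{\alpha,\gamma,\eps}$ and $\bar\mu_{\alpha,\beta,\gamma,\eps}$ are indeed nondecreasing and continuously differentiable on $\T$ with value $\eps$ at $s=0$ --- and, for $\bar\mu$, that the bracket in \eqref{eq:2.16} stays strictly positive on $\T$, the exponential term dominating the subtracted constant --- so that all the computations above are valid for $\phi=\tilde\varphi$ and $\phi=\bar\varphi$.

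The routine but error-prone part is the exponent bookkeeping in the cancellation that converts the left-hand side of \eqref{eq:2.2} into the ODE \eqref{eq:2.12}: one must carry the fractional powers $\tfrac{2}{2-\alpha}$, $\tfrac{\alpha}{2-\alpha}$ and $\tfrac{2(\alpha-1)}{\alpha}$ through $\phi_x$, $\phi_{xx}-\delta_{\alpha,\eps}$ and $\phi_s$ and check that all of them, and all powers of $\phi$ and of $x+k_{\alpha,\eps}$, collapse to the clean form \eqref{eq:2.12}. The only other delicate point is the closed-form integration of \eqref{eq:2.12} in the $\beta>0$ case together with the positivity of the bracket defining $\bar\mu$; everything else consists of direct substitutions of the formulas established earlier in this section.
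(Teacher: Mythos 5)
Your proposal is correct and follows essentially the same route as the paper: the same choice of $\phi$, the same construction of $\delta_{\alpha,\eps}$ as a positive lower bound for the right-hand side of \eqref{eq:2.9}, the same Young-inequality reduction of \eqref{eq:2.1} to \eqref{eq:2.2}, and the same reduction to the ODE \eqref{eq:2.12}. The only (immaterial) difference is that you solve \eqref{eq:2.12} by the linearizing substitution $u_s=\mu_s^{\alpha/(2-\alpha)}$ and variation of constants, whereas the paper separates variables and integrates directly; both yield \eqref{eq:2.15} and \eqref{eq:2.16}.
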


Finally, for the uniqueness, comparison theorem and stability theorem of unbounded solutions to BSDE$(\xi,g)$ with generator $g$ satisfying \eqref{eq:1.2} with $\alpha\in (1,2)$, some stronger assumptions than those needed for the existence are required as usual. We first assume in addition that $(\xi,g_0)$ has sub-exponential moments integrability of any order and the generator $g$ is convex or concave with respect to the state variables $(y,z)$, which appears a natural assumption for a non-linear growth function (see e.g. \citet{BriandHu2008PTRF} and \citet{DelbaenHuRichou2011AIHPPS}), and then relax the convexity (concavity) assumption. The main idea is to use the $\theta$-technique developed in \citet{BriandHu2008PTRF} to prove these results. More specifically, in order to take
advantage of the (extended) convexity condition, we will estimate $Y^1_\cdot-\theta Y^2_\cdot$, for each $\theta\in (0,1)$, instead of estimating the difference between the processes $Y^1_\cdot$ and $Y^2_\cdot$. Moreover, it turns out that the uniform a priori estimate is also the key to solve the uniqueness, comparison theorem and stability theorem of the solutions.\vspace{-0.1cm}

\section{Existence of the solution}
\label{sec:3-Main results}
\setcounter{equation}{0}

In this section, we assume that $\xi$ is a terminal condition and $g$ is a generator which is continuous in $(y,z)$, and satisfies the following assumption:
\begin{enumerate}
\renewcommand{\theenumi}{(H1)}
\renewcommand{\labelenumi}{\theenumi}
\item\label{H1} There exist three constants $\alpha\in (1,2)$, $\beta\geq 0$ and $\gamma>0$ such that $\as$,
    $$
    |g(\omega,t,y,z)|\leq |g(\omega,t,0,0)|+\beta|y|+\gamma |z|^\alpha,\ \ \ (y,z)\in \R\times\R^d.
    $$
\end{enumerate}

Define the function\vspace{-0.1cm}
\begin{equation}\label{eq:3.1}
\psi(x,\mu):=\exp\left(\mu~x^{\frac{2}{\alpha^*}}\right)=\exp\left(\mu~x^{\frac{2(\alpha-1)}{\alpha}}\right),\ \ (x,\mu)\in \R_+\times \R_+,\vspace{-0.1cm}
\end{equation}
and the two constants
\begin{equation}\label{eq:3.2}
\begin{array}{l}
\Dis \tilde\mu^0_{\alpha,\gamma,T}:=(\tilde c_{\alpha,\gamma}T)^{\frac{2-\alpha}\alpha}=
\frac{\alpha^2}
{2(\alpha-1)^{\frac{2(\alpha-1)}\alpha}}\gamma^{\frac{2}\alpha}
T^{\frac{2-\alpha}\alpha},\vspace{0.2cm}\\
\Dis \bar\mu^0_{\alpha,\beta,\gamma,T}:=\left\{\bar c_{\alpha,\beta,\gamma}\exp\left(\frac{2(\alpha-1)\beta}{2-\alpha}T \right)-\bar c_{\alpha,\beta,\gamma}\right\}^{\frac{2-\alpha}\alpha},\vspace{0.1cm}
\end{array}
\end{equation}
where $\tilde c_{\alpha,\gamma}$ and $\bar c_{\alpha,\beta,\gamma}$ are defined in \eqref{eq:2.17}. Note that
$\tilde\mu^0_{\alpha,\gamma,T}=\lim_{\eps\To 0^+} \tilde\mu_{\alpha,\gamma,\eps}(T)$ and $\bar\mu^0_{\alpha,\beta,\gamma,T}=\lim_{\eps\To 0^+} \bar\mu_{\alpha,\beta,\gamma,\eps}(T)$,
where $\tilde\mu_{\alpha,\gamma,\eps}(\cdot)$ and $\bar\mu_{\alpha,\beta,\gamma,\eps}(\cdot)$ are defined in \eqref{eq:2.15} and \eqref{eq:2.16} \vspace{0.1cm}respectively.

The following existence theorem is the main result of this section.\vspace{-0.1cm}

\begin{thm}\label{thm:3.1}
Assume that $\xi$ is a terminal condition, $g$ is a generator which is continuous with respect to $(y,z)$ and satisfies assumption \ref{H1} with parameters $\alpha$, $\beta$ and $\gamma$, and the function $\psi(x,\mu)$ and the constants $\tilde\mu^0_{\alpha,\gamma,T}$ and $\bar\mu^0_{\alpha,\beta,\gamma,T}$ are defined respectively in \eqref{eq:3.1} and \eqref{eq:3.2}. \vspace{0.1cm}

(i) Let $\beta=0$ and $\tilde \mu_{\alpha,\gamma,\eps}(\cdot)$ be defined in \eqref{eq:2.15}. If there exists a constant $\mu>\tilde\mu^0_{\alpha,\gamma,T}$ such that
\begin{equation}\label{eq:3.3}
\E\left[\psi\left(|\xi|+\int_0^T|g(t,0,0)|{\rm d}t,\ \mu\right)\right]=\E\left[\exp\left\{\mu \left(|\xi|+\int_0^T|g(t,0,0)|{\rm d}t\right)^{\frac{2}{\alpha^*}} \right\}\right]<+\infty,\vspace{0.1cm}
\end{equation}
then BSDE$(\xi,g)$ admits a solution $(Y_t,Z_t)_{t\in\T}$ such that $\left(\psi\left(|Y_t|,\ \tilde\mu_{\alpha,\gamma,\eps}(t)\right)\right)_{t\in\T}$ belongs to class (D) for some $\eps>0$ and $Z_\cdot\in \mcal^2$. Moreover, for some constant $\delta_{\alpha,\eps}>0$ depending only on $(\alpha,\eps)$ we have, $\ps$,
\begin{equation}\label{eq:3.4}
\begin{array}{lll}
&&\Dis \psi\left(|Y_t|,\ \tilde\mu_{\alpha,\gamma,\eps}(t)\right)+\frac{\delta_{\alpha,\eps}}{2}
\E\left[\left.\int_t^T|Z_s|^2{\rm d}s\right|\F_t\right]\vspace{0.2cm} \\
&\leq & \Dis C_{\mu,\alpha,\eps}\E\left[\left.\psi\left(|\xi|+\int_0^T|g(s,0,0)|{\rm d}s,\  \tilde\mu_{\alpha,\gamma,\eps}(T)\right)\right|\F_t\right]\vspace{0.2cm}\\
&= & \Dis C_{\mu,\alpha,\eps}\E\left[\left.\psi\left(|\xi|+\int_0^T|g(s,0,0)|{\rm d}s, \ \mu\right)\right|\F_t\right],\ \ \ \ \ t\in \T,
\end{array}
\end{equation}
where $C_{\mu,\alpha,\eps}$ is a positive constant depending only on $(\mu,\alpha,\eps)$.\vspace{0.2cm}

(ii) Let $\beta>0$ and $\bar\mu_{\alpha,\beta,\gamma,\eps}(\cdot)$ be defined in \eqref{eq:2.16}. If there exists a constant $\mu>\bar\mu^0_{\alpha,\beta,\gamma,T}$ such that \eqref{eq:3.3} holds, then BSDE$(\xi,g)$ admits a solution $(Y_t,Z_t)_{t\in\T}$ such that $\left(\psi\left(|Y_t|,\ \bar\mu_{\alpha,\beta,\gamma,\eps}(t)\right)\right)_{t\in\T}$ belongs to class (D) for some $\eps>0$ and $Z_\cdot\in \mcal^2$. Moreover, for some $\delta_{\alpha,\eps}>0$ depending only on $(\alpha,\eps)$ we have, $\ps$,
\begin{equation}\label{eq:3.5}
\begin{array}{lll}
&&\Dis\psi\left(|Y_t|,\ \bar\mu_{\alpha,\beta,\gamma,\eps}(t)\right)+\frac{\delta_{\alpha,\eps}}{2}
\E\left[\left.\int_t^T|Z_s|^2{\rm d}s\right|\F_t\right]\vspace{0.2cm} \\
&\leq & \Dis C_{\mu,\alpha,\eps}\E\left[\left.\psi\left(|\xi|+\int_0^T|g(s,0,0)|{\rm d}s, \ \bar\mu_{\alpha,\beta,\gamma,\eps}(T)\right)\right|\F_t\right]\vspace{0.2cm}\\
&= & \Dis C_{\mu,\alpha,\eps}\E\left[\left.\psi\left(|\xi|+\int_0^T|g(s,0,0)|{\rm d}s, \ \mu\right)\right|\F_t\right],\ \ \ t\in\T,
\end{array}
\end{equation}
where the constant $C_{\mu,\alpha,\eps}$ is the same as (i).
\end{thm}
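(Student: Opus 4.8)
The plan is the classical three–step route for unbounded BSDEs: regularize the data, extract from \cref{Pro:2.1} a \emph{uniform} a~priori estimate, and pass to the limit. It suffices to treat part~(ii) ($\beta>0$); part~(i) is obtained verbatim with $\tilde\varphi$ and \eqref{eq:2.18} in place of $\bar\varphi$ and \eqref{eq:2.19}. Fix $\mu>\bar\mu^0_{\alpha,\beta,\gamma,T}$. From the explicit form \eqref{eq:2.16}, $\eps\mapsto\bar\mu_{\alpha,\beta,\gamma,\eps}(T)$ is continuous and strictly increasing from $\bar\mu^0_{\alpha,\beta,\gamma,T}$ (at $0^+$) to $+\infty$, so there is a unique $\eps>0$ with $\bar\mu_{\alpha,\beta,\gamma,\eps}(T)=\mu$; fix it and write $\varphi:=\bar\varphi(\cdot,\cdot;\eps)$, $\bar\mu_s:=\bar\mu_{\alpha,\beta,\gamma,\eps}(s)$, $k:=k_{\alpha,\eps}$, $\delta:=\delta_{\alpha,\eps}$. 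First I would introduce, in the usual way, a doubly indexed approximating sequence: Lipschitz-in-$(y,z)$ generators $g_{n,p}$ (obtained e.g.\ by truncating the $z$-variable to reduce to linear growth and then applying the Lepeltier--San~Martin sup/inf-convolution) with $g_{n,p}(\cdot,0,0)$ bounded and $|g_{n,p}(\cdot,0,0)|\le|g_0|$, satisfying \ref{H1} with the same constants $\alpha,\beta,\gamma$, converging to $g$ locally uniformly (and monotone in $n$ if desired), together with truncations $\xi^p\in L^\infty$ of $\xi$. Since \eqref{eq:3.3} forces $|\xi|$ and $\int_0^T|g_0|\,{\rm d}s$ to have finite moments of every order (so $g_0\in\lcal^q$ for all $q$), the classical Lipschitz theory yields unique solutions $(Y^{n,p},Z^{n,p})\in\s^\infty\times\mcal^2$ of BSDE$(\xi^p,g_{n,p})$, and the comparison theorem is available.

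The core is the uniform a~priori estimate, and this is where \cref{Pro:2.1} does its work. For fixed $(n,p)$, $t\in\T$ and $\tau_m:=\inf\{s\ge t:|Y^{n,p}_s|\ge m\}\wedge T$, apply It\^o--Tanaka to $s\mapsto\varphi(s,|Y^{n,p}_s|)$ on $[t,\tau_m]$ (legitimate since $\varphi\in C^{1,2}(\T\times\R_+)$ and $\varphi_x>0$ by \eqref{eq:2.5}): using ${\rm d}|Y^{n,p}_s|=-\,{\rm sgn}(Y^{n,p}_s)g_{n,p}(s,Y^{n,p}_s,Z^{n,p}_s)\,{\rm d}s+{\rm sgn}(Y^{n,p}_s)Z^{n,p}_s\cdot{\rm d}B_s+{\rm d}L^0_s$, discarding the nonnegative local-time term, estimating the drift by \ref{H1}, writing $\tfrac{1}{2}\varphi_{xx}|z|^2=\tfrac{\delta}{2}|z|^2+\tfrac{1}{2}(\varphi_{xx}-\delta)|z|^2$, and invoking \eqref{eq:2.19} (at $x=|Y^{n,p}_s|$) to cancel the $\beta x+\gamma|z|^\alpha$ part of the drift against the convexity and time-derivative terms, one obtains
\[
\varphi(t,|Y^{n,p}_t|)+\tfrac{\delta}{2}\int_t^{\tau_m}|Z^{n,p}_s|^2\,{\rm d}s\le\varphi(\tau_m,|Y^{n,p}_{\tau_m}|)+\int_t^{\tau_m}\varphi_x(s,|Y^{n,p}_s|)\,|g_0(s)|\,{\rm d}s-(M_{\tau_m}-M_t),
\]
with $M$ the stochastic integral. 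The residual $g_0$-term is then controlled by splitting according to whether $|Y^{n,p}_s|$ dominates $|g_0(s)|$ (using that $\varphi_x$ stays bounded near the origin, so that on $\{|Y^{n,p}_s|\lesssim|g_0(s)|\}$ this term is $\lesssim e^{c|g_0(s)|^{2/\alpha^*}}|g_0(s)|$, which is integrable thanks to \eqref{eq:3.3}), the slack afforded by the strict inequality $\mu>\bar\mu^0$ being used precisely here. Taking $\E[\,\cdot\,|\F_t]$ (the stopped $M$ being a true martingale), letting $m\to\infty$ ($\tau_m\to T$ a.s.\ and $Y^{n,p}$ bounded, so dominated convergence applies), and using $\bar\mu_T=\mu$ together with the elementary bounds $a^{2/\alpha^*}\le(a+b)^{2/\alpha^*}\le a^{2/\alpha^*}+b^{2/\alpha^*}$ (valid because $2/\alpha^*=2(\alpha-1)/\alpha\in(0,1)$), one gets exactly \eqref{eq:3.5} for $(Y^{n,p},Z^{n,p})$ with $C_{\mu,\alpha,\eps}$ \emph{independent of $n,p$}. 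In particular $\sup_{n,p}\|Z^{n,p}\|_{\mcal^2}<\infty$; re-running the argument with $\mu'\in(\bar\mu^0_{\alpha,\beta,\gamma,T},\mu)$ gives $\psi(|Y^{n,p}_t|,\bar\mu'_t)\le C\,\E[\psi(|\xi|+\int_0^T|g_0|,\mu')\mid\F_t]$ with $\psi(\cdot,\mu')\in L^r$ for some $r>1$, whence Doob's $L^r$-inequality yields $\sup_{n,p}\|Y^{n,p}\|_{\s^q}<\infty$ for every $q$, and It\^o applied to $|Y^{n,p}|^2$ together with Burkholder--Davis--Gundy and Young's inequality $\gamma|z|^\alpha\le\eta|z|^2+C_\eta$ (available because $\alpha<2$) gives $\sup_{n,p}\|Z^{n,p}\|_{\mcal^q}<\infty$ for every $q$.

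For the passage to the limit, monotonicity in $n$ and the comparison theorem give monotone (hence pointwise) convergence of $Y^{n,p}$; in general, applying It\^o to $|Y^{n,p}-Y^{n',p'}|^2$ on $[0,T]$ and bounding the cross term via \ref{H1}, the uniform $\s^q/\mcal^q$ bounds and Young's inequality, one shows that $(Z^{n,p})$ is Cauchy in $\mcal^2$ and $(Y^{n,p})$ in every $\s^q$; let $(Y,Z)$ be the limit. Since $\int_0^T|g_{n,p}(s,Y^{n,p}_s,Z^{n,p}_s)|\,{\rm d}s\le\int_0^T(|g_0|+\beta|Y^{n,p}_s|+\gamma|Z^{n,p}_s|^\alpha)\,{\rm d}s$ is bounded in $L^r$ for some $r>1$, hence uniformly integrable, and $g$ is continuous, one gets $g_{n,p}(\cdot,Y^{n,p}_\cdot,Z^{n,p}_\cdot)\to g(\cdot,Y_\cdot,Z_\cdot)$ in $\lcal^1$ along a subsequence, and passing to the limit in BSDE$(\xi^p,g_{n,p})$ shows $(Y,Z)$ solves BSDE$(\xi,g)$ with $Y$ continuous. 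Estimate \eqref{eq:3.5} for $(Y,Z)$ follows from Fatou's lemma on the left and the convergences on the right; then $(\psi(|Y_t|,\bar\mu_t))_{t\in\T}$ is of class~(D) because it is a nonnegative process dominated by the uniformly integrable martingale $(C_{\mu,\alpha,\eps}\,\E[\psi(|\xi|+\int_0^T|g_0|,\mu)\mid\F_t])_{t\in\T}$, and $Z\in\mcal^2$ is immediate.

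I expect the main obstacle to be the uniform a~priori estimate — specifically, making the It\^o--Tanaka/localization argument fully rigorous and, above all, controlling the residual $g_0$-term under the mere $\exp(\mu L^{2/\alpha^*})$-integrability, where a Gronwall-type treatment would be far too lossy — together with securing, in the limit, the uniform integrability of the generators. Both rely decisively on the sub-quadratic exponent $\alpha<2$: it is exactly what makes $\gamma|z|^\alpha$ subordinate, through Young's inequality, to the $|z|^2$ produced by the strict convexity $\varphi_{xx}>\delta$ of \cref{Pro:2.1} and to the $\mcal^2$-bounds on $Z$, and it is the structural reason why the whole scheme — engineered in \cref{sec:2-Whole idea} — closes.
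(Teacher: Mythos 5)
Your overall skeleton (bounded approximations, a uniform a priori estimate drawn from \cref{Pro:2.1}, passage to the limit) is the paper's strategy, but two of your key steps have genuine gaps. The first is the treatment of the data term. You apply It\^o--Tanaka to $\varphi(s,|Y^{n,p}_s|)$ and are then left with the residual $\int_t^{\tau_m}\varphi_x(s,|Y^{n,p}_s|)\,|g_0(s)|\,{\rm d}s$, which you propose to control by splitting on whether $|Y^{n,p}_s|\lesssim|g_0(s)|$. This does not close. On the set $\{|Y^{n,p}_s|\lesssim|g_0(s)|\}$ your bound is of the form $e^{c|g_0(s)|^{2/\alpha^*}}|g_0(s)|$, but \eqref{eq:3.3} only gives exponential moments of the \emph{integral} $\int_0^T|g_0(s)|\,{\rm d}s$, not of the pointwise values $|g_0(s)|$: take $g_0$ equal to $M$ on an interval of length $1/M$ (or $g_0(s)\sim (T-s)^{-1/2}$) and the quantity $\int_0^T e^{c|g_0(s)|^{2/\alpha^*}}|g_0(s)|\,{\rm d}s$ is unbounded (or infinite) while $\psi\bigl(|\xi|+\int_0^T|g_0|,\mu\bigr)$ stays bounded; so this term is neither finite in general nor dominated by the right-hand side of \eqref{eq:3.4}--\eqref{eq:3.5}. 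On the complementary set $\{|Y^{n,p}_s|\gtrsim|g_0(s)|\}$ you create an extra drift of order $\varphi_x(s,|Y^{n,p}_s|)\,|Y^{n,p}_s|$, i.e.\ a linear-in-$y$ term that the ODE \eqref{eq:2.12} was not built to absorb (for part (i) it was solved with $\beta=0$); absorbing it would force a larger $\beta$ and hence a larger threshold than $\tilde\mu^0_{\alpha,\gamma,T}$, destroying the statement. The paper's fix is structural and is exactly what you are missing: work with the shifted process $\bar Y_t:=|Y_t|+\int_0^t|g_0(s)|\,{\rm d}s$, $\bar Z_t:={\rm sgn}(Y_t)Z_t$, and apply It\^o--Tanaka to $\varphi(s,\bar Y_s;\eps)$. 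The $|g_0|$ in the drift of $\bar Y$ cancels the data term coming from \ref{H1}, so \eqref{eq:2.19} applies with no residual, and the terminal value $\bar Y_T=|\xi|+\int_0^T|g_0|$ is precisely why that combination appears inside $\psi$ in \eqref{eq:3.4}--\eqref{eq:3.5} (this is \cref{Pro:3.3}).

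The second gap is the limit passage. With generators that are merely continuous in $(y,z)$, applying It\^o to $|Y^{n,p}-Y^{n',p'}|^2$ and invoking growth bounds cannot produce a Cauchy estimate: the cross term involves increments of $g$ that you cannot convert into differences of $(Y,Z)$ without a Lipschitz modulus, and the Lipschitz constants of your inf-convolutions blow up in $(n,p)$, so no uniform Gronwall argument is available. The paper instead truncates the \emph{values}, $\xi^{n,p}:=\xi^+\wedge n-\xi^-\wedge p$ and $g^{n,p}:=g^+\wedge n-g^-\wedge p$ (which preserves continuity and \ref{H1} with the same constants), takes the maximal bounded solutions of Lepeltier--San~Martin, uses comparison to get monotonicity in $n$ and $p$, and then runs the localization procedure of Briand--Hu (2006): on stochastic intervals where the dominating martingale $\E[\psi(|\xi|+\int_0^T|g_0|,\mu)\mid\F_t]$ is below a level, the $Y^{n,p}$ are uniformly bounded, monotone stability yields a limit pair $(Y:=\inf_p\sup_n Y^{n,p},Z)$ solving BSDE$(\xi,g)$ with $Z^{n,p}\to Z$ ${\rm d}\mathbb{P}\times{\rm d}t$-a.e., and \eqref{eq:3.4}--\eqref{eq:3.5}, the class (D) property and $Z\in\mcal^2$ then follow from \eqref{eq:3.13}-type bounds by Fatou. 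Your choice of $\eps$ with $\bar\mu_{\alpha,\beta,\gamma,\eps}(T)=\mu$ and the final domination argument for class (D) are fine, but without the shift trick and without a monotone-stability/localization argument the proof as written does not go through.
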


\begin{rmk}\label{rmk:3.2}
It is not very hard to check that $\tilde\mu^0_{\alpha,\gamma,T}$ and $\bar\mu^0_{\alpha,\beta,\gamma,T}$ defined in \eqref{eq:3.2} tends respectively to $2\gamma $ and $2\gamma e^{\beta T}$ as $\alpha \To 2$, which is a direct correspondence of the known result for the quadratic growth case in \citet{BriandHu2006PTRF,BriandHu2008PTRF}. From
this point of view, the condition \eqref{eq:3.3} in \cref{thm:3.1} is seemingly the reasonably weakest possible one guaranteeing the existence of the solution. However, by now we can not prove it.
\end{rmk}

In order to prove \cref{thm:3.1}, we need the following proposition, which establishes some a priori estimate for solutions to BSDEs with bounded terminal conditions and sub-quadratic growth generators.

\begin{pro}\label{Pro:3.3}
Assume that $\xi$ is a terminal condition, $g$ is a generator which is continuous in the state variables $(y,z)$ and satisfies assumption \ref{H1} with parameters $\alpha$, $\beta$ and $\gamma$, and the functions $\tilde \mu_{\alpha,\gamma,\eps}(s)$, $\bar \mu_{\alpha,\beta,\gamma,\eps}(s)$ and $\psi(x,\mu)$ together with the constant $k_{\alpha,\eps}$ are respectively defined in \eqref{eq:2.15}, \eqref{eq:2.16} and \eqref{eq:3.1}.\vspace{0.2cm}

Let $|\xi|+\int_0^T|g(t,0,0)|{\rm d}t$ be a bounded random variable, and $(Y_t,Z_t)_{t\in\T}$ a solution to BSDE$(\xi,g)$ such that $Y_\cdot$ is a bounded process (and $Z_\cdot\in \mcal^2$). Then for any $\eps>0$, there exists a constant $\delta_{\alpha,\eps}>0$ depending only on $(\alpha,\eps)$ such that $\ps$, for each $t\in\T$, the inequality
\begin{equation}\label{eq:3.6}
\begin{array}{lll}
&&\Dis\psi\left(|Y_t|,\ \tilde\mu_{\alpha,\gamma,\eps}(t)\right)+\frac{\delta_{\alpha,\eps}}{2}
\E\left[\left.\int_t^T|Z_s|^2{\rm d}s\right|\F_t\right]\vspace{0.2cm} \\
&\leq & \Dis \exp\left(\tilde\mu_{\alpha,\gamma,\eps}(T) \ k_{\alpha,\eps}^{\frac{2}{\alpha^*}}\right) \E\left[\left.\psi\left(|\xi|+\int_0^T|g(s,0,0)|{\rm d}s,\  \tilde\mu_{\alpha,\gamma,\eps}(T)\right)\right|\F_t\right]\vspace{0.2cm}
\end{array}
\end{equation}
holds for $\beta=0$, and the inequality
\begin{equation}\label{eq:3.7}
\begin{array}{lll}
&&\Dis
\psi\left(|Y_t|,\ \bar\mu_{\alpha,\beta,\gamma,\eps}(t)\right)+\frac{\delta_{\alpha,\eps}}{2}
\E\left[\left.\int_t^T|Z_s|^2{\rm d}s\right|\F_t\right]\vspace{0.2cm} \\
&\leq & \Dis \exp\left(\bar\mu_{\alpha,\beta,\gamma,\eps}(T) \ k_{\alpha,\eps}^{\frac{2}{\alpha^*}}\right) \E\left[\left.\psi\left(|\xi|+\int_0^T|g(s,0,0)|{\rm d}s, \ \bar\mu_{\alpha,\beta,\gamma,\eps}(T)\right)\right|\F_t\right]
\end{array}
\end{equation}
holds for $\beta>0$.
\end{pro}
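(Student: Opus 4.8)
The plan is to apply It\^o--Tanaka's formula to the process $\tilde\varphi\big(s,\,|Y_s|+\int_0^s|g(r,0,0)|{\rm d}r;\,\eps\big)$ when $\beta=0$, resp.\ to $\bar\varphi\big(s,\,|Y_s|+\int_0^s|g(r,0,0)|{\rm d}r;\,\eps\big)$ when $\beta>0$, on $[t,T]$, and then to read off the pointwise inequality \eqref{eq:2.18}, resp.\ \eqref{eq:2.19}, of \cref{Pro:2.1} together with the growth assumption \ref{H1}. The reason for feeding $\int_0^s|g(r,0,0)|{\rm d}r$, rather than just $|Y_s|$, into the spatial slot is twofold: it absorbs the $|g(\cdot,0,0)|$ term produced by \ref{H1}, and, through the subadditivity of $x\mapsto x^{2/\alpha^*}$, it accounts for the factor $\exp\big(\tilde\mu_{\alpha,\gamma,\eps}(T)\,k_{\alpha,\eps}^{2/\alpha^*}\big)$ in front of the terminal term in \eqref{eq:3.6}--\eqref{eq:3.7}. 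The boundedness of $|\xi|+\int_0^T|g(t,0,0)|{\rm d}t$ and of $Y_\cdot$ makes every integrability requirement below automatic.

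Fix $\eps>0$, let $\delta_{\alpha,\eps}>0$ be the constant of \cref{Pro:2.1}, set $A_s:=\int_0^s|g(r,0,0)|{\rm d}r$ and $X_s:=|Y_s|+A_s$; this is a continuous, nonnegative, bounded semimartingale. By Tanaka's formula and \eqref{eq:1.1}, ${\rm d}|Y_s|={\rm sgn}(Y_s)\big(-g(s,Y_s,Z_s){\rm d}s+Z_s\cdot{\rm d}B_s\big)+{\rm d}L_s$ with $L_\cdot$ continuous and nondecreasing, so ${\rm d}\langle X\rangle_s=|Z_s|^2{\rm d}s$ and, since $\tilde\varphi(\cdot,\cdot;\eps)\in C^{1,2}(\T\times\R_+)$ (recall $k_{\alpha,\eps}>0$) with $\tilde\varphi_x>0$, It\^o's formula applied to $\tilde\varphi(s,X_s;\eps)$ yields, after discarding the nonpositive term $-\int_t^T\tilde\varphi_x(s,X_s;\eps)\,{\rm d}L_s$,
\[
\tilde\varphi(t,X_t;\eps)\ \leq\ \tilde\varphi(T,X_T;\eps)-\int_t^T I_s\,{\rm d}s-\int_t^T \tilde\varphi_x(s,X_s;\eps)\,{\rm sgn}(Y_s)\,Z_s\cdot{\rm d}B_s,
\]
where $I_s:=\tilde\varphi_s+\tilde\varphi_x\big(|g(s,0,0)|-{\rm sgn}(Y_s)g(s,Y_s,Z_s)\big)+\tfrac12\tilde\varphi_{xx}|Z_s|^2$, all derivatives of $\tilde\varphi$ being evaluated at $(s,X_s;\eps)$.

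The key step is the lower bound for $I_s$. When $\beta=0$, \ref{H1} gives $-{\rm sgn}(Y_s)g(s,Y_s,Z_s)\geq-|g(s,0,0)|-\gamma|Z_s|^\alpha$, hence (using $\tilde\varphi_x>0$) $I_s\geq\tilde\varphi_s-\gamma\tilde\varphi_x|Z_s|^\alpha+\tfrac12\tilde\varphi_{xx}|Z_s|^2\geq\tfrac{\delta_{\alpha,\eps}}{2}|Z_s|^2$, the last inequality being \eqref{eq:2.18} at $x=X_s$, $z=Z_s$; when $\beta>0$ one repeats this with $\bar\varphi$, using $-{\rm sgn}(Y_s)g(s,Y_s,Z_s)\geq-|g(s,0,0)|-\beta|Y_s|-\gamma|Z_s|^\alpha$ and then $|Y_s|\leq X_s$, so that $I_s\geq\bar\varphi_s-\bar\varphi_x(\beta X_s+\gamma|Z_s|^\alpha)+\tfrac12\bar\varphi_{xx}|Z_s|^2\geq\tfrac{\delta_{\alpha,\eps}}{2}|Z_s|^2$ by \eqref{eq:2.19}. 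Substituting into the last display and rearranging,
\[
\tilde\varphi(t,X_t;\eps)+\frac{\delta_{\alpha,\eps}}{2}\int_t^T|Z_s|^2{\rm d}s\ \leq\ \tilde\varphi(T,X_T;\eps)-\int_t^T\tilde\varphi_x(s,X_s;\eps)\,{\rm sgn}(Y_s)\,Z_s\cdot{\rm d}B_s.
\]
Since $X_\cdot$ is bounded and $s\mapsto\tilde\varphi_x(s,\cdot;\eps)$ is continuous, $\tilde\varphi_x(s,X_s;\eps)$ is bounded, and $Z_\cdot\in\mcal^2$ forces $\E\int_0^T|Z_s|^2{\rm d}s<\infty$, so the stochastic integral is a martingale and vanishes under $\E[\,\cdot\,|\F_t]$. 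Finally $\psi(|Y_t|,\tilde\mu_{\alpha,\gamma,\eps}(t))\leq\tilde\varphi(t,X_t;\eps)$ (because $|Y_t|\leq X_t$ and $\tilde\mu_{\alpha,\gamma,\eps}(\cdot)\geq 0$), while $X_T=|\xi|+\int_0^T|g(r,0,0)|{\rm d}r$ and the subadditivity of $x\mapsto x^{2/\alpha^*}$ on $\R_+$ (valid since $2/\alpha^*=2(\alpha-1)/\alpha\in(0,1)$ for $\alpha\in(1,2)$) yield
\[
\tilde\varphi(T,X_T;\eps)\ \leq\ \exp\!\left(\tilde\mu_{\alpha,\gamma,\eps}(T)\,k_{\alpha,\eps}^{\frac{2}{\alpha^*}}\right)\psi\!\left(|\xi|+\int_0^T|g(s,0,0)|{\rm d}s,\ \tilde\mu_{\alpha,\gamma,\eps}(T)\right).
\]
Putting these together proves \eqref{eq:3.6}, and the computation with $\bar\varphi$ and $\bar\mu_{\alpha,\beta,\gamma,\eps}$ gives \eqref{eq:3.7} in precisely the same fashion.

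I do not expect a genuine obstacle here: the proposition is the deterministic inequality of \cref{Pro:2.1} grafted onto It\^o--Tanaka's formula. The two points that require care are the sign bookkeeping --- arranging that the local-time term $\int_t^T\tilde\varphi_x\,{\rm d}L_s$ and the drift contribution of the generator both fall on the favourable side of the inequality, which is exactly what $\tilde\varphi_x>0$ and the construction of $\tilde\varphi$ in \cref{Pro:2.1} are designed for --- and the a priori integrability needed to drop the stochastic integral, which is guaranteed here by the standing hypotheses that $Y_\cdot$ is bounded and $Z_\cdot\in\mcal^2$.
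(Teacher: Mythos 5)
Your proof is correct and follows essentially the same route as the paper: apply It\^o--Tanaka to $\tilde\varphi$ (resp.\ $\bar\varphi$) of $|Y_s|+\int_0^s|g(r,0,0)|{\rm d}r$, discard the local-time term, invoke \eqref{eq:2.18}--\eqref{eq:2.19} of \cref{Pro:2.1} for the drift bound, and finish with the sandwich between $\tilde\varphi$ and $\psi$ coming from subadditivity of $x\mapsto x^{2/\alpha^*}$. The only cosmetic difference is that you use boundedness of $X_\cdot$ together with $Z_\cdot\in\mcal^2$ to see the stochastic integral is a true martingale, whereas the paper localizes with stopping times and passes to the limit via Fatou and dominated convergence; both are valid here.
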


\begin{proof}
We first consider the case of $\beta=0$. Define
$$
\bar Y_t:=|Y_t|+\int_0^t |g(s,0,0)|{\rm d}s\ \ \ \
{\rm and}\ \ \ \ \bar Z_t:={\rm sgn}(Y_t)Z_t,\ \ \ \ t\in \T.
$$
It follows from It\^{o}-Tanaka's formula that
$$
\bar Y_t=\bar Y_T+\int_t^T \left({\rm sgn}(Y_s)g(s,Y_s,Z_s)-|g(s,0,0)|\right){\rm d}s-\int_t^T \bar Z_s \cdot {\rm d}B_s-\int_t^T {\rm d}L_s, \ \ \ t\in\T,
$$
where $L_\cdot$ stands for the local time of $Y_\cdot$ at $0$. Now, we fix $\eps>0$ and apply It\^{o}-Tanaka's formula to the process $\tilde\varphi(s, \bar Y_s; \eps)$, where the function $\tilde\varphi(s,x;\eps)$ is defined in \eqref{eq:2.13}, to derive, in view of assumption \ref{H1} with $\beta=0$,
$$
\begin{array}{lll}
\Dis {\rm d}\tilde\varphi(s,\bar Y_s;\eps)&=&\Dis \tilde\varphi_x(s,\bar Y_s;\eps)
\left(-{\rm sgn}(Y_s)g(s,Y_s,Z_s)+|g(s,0,0)|\right){\rm d}s+\tilde\varphi_x(s,\bar Y_s;\eps)\bar Z_s \cdot {\rm d}B_s\vspace{0.1cm}\\
&&\Dis +\tilde\varphi_x(s,\bar Y_s;\eps){\rm d}L_s+{1\over 2}\tilde\varphi_{xx}(s,\bar Y_s;\eps)|Z_s|^2{\rm d}s+\tilde\varphi_s(s,\bar Y_s;\eps){\rm d}s
\vspace{0.1cm}\\
&\geq &\left[-\gamma \tilde\varphi_x(s,\bar Y_s;\eps)|Z_s|^\alpha+{1\over 2}\tilde\varphi_{xx}(s,\bar Y_s;\eps)|Z_s|^2+\tilde\varphi_s(s,\bar Y_s;\eps)\right]{\rm d}s\vspace{0.1cm}\\
&&\Dis +\tilde\varphi_x(s,\bar Y_s;\eps)\bar Z_s \cdot {\rm d}B_s.
\end{array}
$$
Thus, from \eqref{eq:2.18} in \cref{Pro:2.1} we know the existence of a positive constant $\delta_{\alpha,\eps}>0$ depending only on $\alpha,\eps$ such that
\begin{equation}\label{eq:3.8}
{\rm d}\tilde\varphi(s,\bar Y_s;\eps)\geq \frac{1}{2}\delta_{\alpha,\eps}|Z_s|^2 {\rm d}s+\tilde\varphi_x(s,\bar Y_s;\eps)\bar Z_s \cdot {\rm d}B_s,\ \ s\in \T.
\end{equation}
Let us denote, for each $t\in\T$ and each integer $m\geq 1$, the following stopping time
$$
\tau_m:=\inf\left\{s\in [t,T]: \int_t^s \left(\tilde\varphi_x(r,\bar Y_r;\eps)\right)^2|\bar Z_r|^2{\rm d}r\geq m \right\}\wedge T
$$
with the convention $\inf\emptyset=+\infty$. It follows from the inequality \eqref{eq:3.8} and the definition of $\tau_m$ that for each $t\in \T$ and $m\geq 1$,
$$
\tilde \varphi(t,\bar Y_t;\eps)+ \frac{\delta_{\alpha,\eps}}{2}\E\left[\left.\int_t^{\tau_m}|Z_s|^2 {\rm d}s\right|\F_t\right] \leq \E\left[\left. \tilde\varphi(\tau_m,\bar Y_{\tau_m};\eps)\right|\F_t\right].
$$
Furthermore, in view of the definition of $\tau_m$ again, by sending $m$ to infinity and using Fatou's lemma and Lebesgue's dominated convergence theorem in above inequality we get
\begin{equation}\label{eq:3.9}
\tilde \varphi(t,\bar Y_t;\eps)+ \frac{\delta_{\alpha,\eps}}{2}\E\left[\left.\int_t^T |Z_s|^2 {\rm d}s\right|\F_t\right] \leq \E\left[\left. \tilde\varphi(T,\bar Y_T;\eps)\right|\F_t\right],\ \ \ t\in\T.
\end{equation}
And, from the definitions of $\tilde\varphi(s,x;\eps)$ and $\psi(x,\mu)$ with the inequality $(a+b)^\lambda\leq a^\lambda+b^\lambda$ for $a,b\geq 0$ and $\lambda\in (0,1)$, observe that for each $x\in\R_+$, $t\in\T$ and $\eps>0$,
\begin{equation}\label{eq:3.10}
\psi(x,\tilde\mu_{\alpha,\gamma,\eps}(t)) \leq \tilde\varphi(t,x;\eps)\leq \exp\left(\tilde\mu_{\alpha,\gamma,\eps}(t) \ k_{\alpha,\eps}^{\frac{2}{\alpha^*}}\right) \psi(x,\tilde\mu_{\alpha,\gamma,\eps}(t)).
\end{equation}
The desired inequality \eqref{eq:3.6} follows immediately from \eqref{eq:3.9} and \eqref{eq:3.10}.\vspace{0.1cm}

Finally, in the case of $\beta>0$, by a similar argument as above we can use the functions $\bar\varphi(s,x;\eps)$ and $\bar\mu_{\alpha,\beta,\gamma,\eps}(t)$ defined respectively in \eqref{eq:2.14} and \eqref{eq:2.16} of \cref{Pro:2.1} instead of $\tilde\varphi(s,x;\eps)$ and $\tilde\mu_{\alpha,\gamma,\eps}(t)$, and apply \eqref{eq:2.19} in \cref{Pro:2.1} to get the desired inequality \eqref{eq:3.7}. The proof is then completed.\vspace{0.1cm}
\end{proof}

\begin{rmk}\label{rmk:3.4}
From the above proof, it is easy to see that in \cref{Pro:3.3}, if   $|Y_\cdot|$ and $|\xi|$ are replaced with $Y^+_\cdot$ and $\xi^+$ respectively, and \ref{H1} is replaced with the following assumption \ref{H1'}:
\begin{enumerate}
\renewcommand{\theenumi}{(H1')}
\renewcommand{\labelenumi}{\theenumi}
\item\label{H1'} There exist three constants $\alpha\in (1,2)$, $\beta\geq 0$ and $\gamma>0$ such that $\as$,
  \[
    g(\omega,t,y,z){\bf 1}_{y>0} \leq |g(\omega,t,0,0)|+\beta|y|+\gamma |z|^\alpha,\ \ \ (y,z)\in \R\times\R^d,
  \]
\end{enumerate}
then the conclusions of \cref{Pro:3.3} still hold for $Y^+_\cdot$ and $\xi^+$, but the term $|Z_s|^2$ in \eqref{eq:3.6} and \eqref{eq:3.7} needs to be replaced with ${\bf 1}_{Y_s>0}|Z_s|^2$. For this, in the above proof one needs to respectively use $Y_\cdot^+$, ${\bf 1}_{Y_\cdot>0} Y_\cdot$ and ${1\over 2}L_\cdot$ instead of $|Y_\cdot|$, ${\rm sgn} (Y_\cdot)$ and $L_\cdot$.\vspace{0.1cm}
\end{rmk}

Now, we can give the proof of \cref{thm:3.1}.

\begin{proof}[The proof of \cref{thm:3.1}]
For any given positive integers $n,p\geq 1$, set
$$
\xi^{n,p}:=\xi^+\wedge n-\xi^-\wedge p\ \ \ \ {\rm and}\ \ \ \ g^{n,p}(\omega,t,y,z):=g^+(\omega,t,y,z)\wedge n-g^-(\omega,t,y,z)\wedge p.
$$
As both the terminal condition $\xi^{n,p}$ and the generator $g^{n,p}$ are bounded and $g^{n,p}(t,y,z)$ remains to be continuous in $(y,z)$, in view of the existence result in \citet{LepeltierSanMartin1997SPL}, the following BSDE$(\xi^{n,p},g^{n,p})$ admits a maximal bounded solution  $(Y^{n,p}_t,Z^{n,p}_t)_{t\in\T}$ such that $Y^{n,p}_\cdot$ is a bounded process and $Z^{n,p}_\cdot\in \mcal^2$:
\begin{equation}\label{eq:3.11}
  Y^{n,p}_t=\xi^{n,p}+\int_t^T g^{n,p}(s,Y^{n,p}_s,Z^{n,p}_s){\rm d}s-\int_t^T Z^{n,p}_s \cdot {\rm d}B_s, \ \ t\in\T.
\end{equation}
And, by virtue of the comparison theorem, $Y^{n,p}_\cdot$ is nondecreasing in $n$ and non-increasing in $p$.\vspace{0.2cm}

We now assume that $\beta=0$ and there exists a constant $\mu>\tilde\mu^0_{\alpha,\gamma,T}$ such that \eqref{eq:3.3} holds. Observe that the function\vspace{-0.1cm}
$$
\tilde \mu_{\alpha,\gamma,\eps}(t):=\left(\tilde c_{\alpha,\gamma}(1+\eps)t+\eps^{\frac\alpha{2-\alpha}}
\right)^{\frac{2-\alpha}\alpha}
$$
defined in \eqref{eq:2.15} is strictly increasing with respect to the variables $t\in\T$ and $\eps>0$, $\tilde \mu_{\alpha,\gamma,\eps}(T)\To\tilde\mu^0_{\alpha,\gamma,T}$ when $\eps\To 0^+$ and $\tilde \mu_{\alpha,\gamma,\eps}(T)\To +\infty$ when $\eps\To +\infty$. Since $\mu>\tilde\mu^0_{\alpha,\gamma,T}$, we can conclude that there must exist a positive $\eps_0>0$ such that for each $t\in\T$,
\begin{equation}\label{eq:3.12}
\eps_0=\tilde \mu_{\alpha,\gamma,\eps_0}(0)\leq \tilde \mu_{\alpha,\gamma,\eps_0}(t)\leq \tilde \mu_{\alpha,\gamma,\eps_0}(T)=\mu.
\end{equation}
Thus, we can apply \eqref{eq:3.6} in \cref{Pro:3.3} with $\eps=\eps_0$ for BSDE \eqref{eq:3.11} to get that there exists a positive constant $\delta_{\alpha,\eps_0}>0$ depending only on $(\alpha,\eps_0)$ such that $\ps$, for each $t\in\T$ and $n,p\geq 1$,
\begin{equation}\label{eq:3.13}
\begin{array}{ll}
&\Dis\psi\left(|Y_t^{n,p}|,\ \eps_0\right)\vspace{0.1cm}\\
\leq &\Dis\psi\left(|Y_t^{n,p}|,\ \tilde\mu_{\alpha,\gamma,\eps_0}(t)\right)+\frac{\delta_{\alpha,\eps_0}}{2}
\E\left[\left.\int_t^T|Z_s^{n,p}|^2{\rm d}s\right|\F_t\right]\vspace{0.2cm} \\
\leq & \Dis \exp\left(\tilde\mu_{\alpha,\gamma,\eps_0}(T) \ k_{\alpha,\eps_0}^{\frac{2}{\alpha^*}}\right) \E\left[\left.\psi\left(|\xi^{n,p}|+\int_0^T|g^{n,p}(s,0,0)|{\rm d}s,\  \tilde\mu_{\alpha,\gamma,\eps_0}(T)\right)\right|\F_t\right]\vspace{0.2cm} \\
\leq & \Dis \exp\left(\mu\ k_{\alpha,\eps_0}^{\frac{2}{\alpha^*}}\right) \E\left[\left.\psi\left(|\xi|+\int_0^T|g(s,0,0)|{\rm d}s,\  \tilde\mu_{\alpha,\gamma,\eps_0}(T)\right)\right|\F_t\right]\vspace{0.2cm} \\
= & \Dis \exp\left(\mu\ k_{\alpha,\eps_0}^{\frac{2}{\alpha^*}}\right) \E\left[\left.\psi\left(|\xi|+\int_0^T|g(s,0,0)|{\rm d}s,\  \mu\right)\right|\F_t\right]<+\infty.\vspace{0.1cm}
\end{array}
\end{equation}
In previous inequality, we have used \eqref{eq:3.12} together with definitions of $\xi^{n,p}$ and $g^{n,p}$. Now, in view of assumption \ref{H1} and the fact that, by \eqref{eq:3.13},
$$
\begin{array}{lll}
\Dis |Y_t^{n,p}|&=& \Dis \left(\frac{1}{\eps_0}\ln\left(\psi\left(|Y_t^{n,p}|, \eps_0\right)\right)\right)^{\frac{\alpha^*}{2}}\vspace{0.2cm}\\
&\leq &\Dis \left(\frac{1}{\eps_0}\mu k_{\alpha,\eps_0}^{\frac{2}{\alpha^*}}+\frac{1}{\eps_0}\ln\left\{ \E\left[\left.\psi\left(|\xi|+\int_0^T|g(s,0,0)|{\rm d}s, \mu\right)\right|\F_t\right]\right\}\right)^{\frac{\alpha^*}{2}},
\vspace{0.2cm}
\end{array}
$$
we can apply the localization procedure developed initially in
\citet{BriandHu2006PTRF} to obtain the existence of a progressively measurable process $(Z_t)_{t\in\T} $ such that $\as$, $Z^{n,p}_\cdot$ tends to $Z_\cdot$ as $n, p$ tends to infinity and the pair of $(Y_\cdot:=\inf_p\sup_n Y^{n,p}_\cdot, \ Z_\cdot)$ is a solution to BSDE$(\xi,g)$. Moreover, we can send $n$ and $p$ to infinity in \eqref{eq:3.13} and use Fatou's lemma to get the inequality \eqref{eq:3.4}, and then $\left(\psi\left(|Y_t|, \tilde\mu_{\alpha,\gamma,\eps_0}(t)\right)\right)_{t\in\T}$ belongs to class (D), and $Z_\cdot\in\mcal^2$.\vspace{0.2cm}

Finally, in the case of $\beta>0$, by a similar argument as above we can use $\bar\mu_{\alpha,\beta,\gamma,\eps}(t)$ defined in \eqref{eq:2.16} of \cref{Pro:2.1} instead of $\tilde\mu_{\alpha,\gamma,\eps}(t) $, and apply \eqref{eq:3.7} with $\eps=\eps_0$ in \cref{Pro:3.3} instead of \eqref{eq:3.6} to get the desired inequality \eqref{eq:3.5}. The theorem is then proved.
\end{proof}

\begin{rmk}\label{rmk:3.5}
From the above proof, it is not very difficult to see that the sub-quadratic growth assumption \ref{H1} in \cref{thm:3.1} and \cref{Pro:3.3} can be relaxed to the following one-sided sub-quadratic growth assumption, which will be used in \cref{sec:5} and \cref{sec:6},
\begin{enumerate}
\renewcommand{\theenumi}{(H1")}
\renewcommand{\labelenumi}{\theenumi}
\item\label{H1"}
There exist four real constants $\alpha\in (1,2)$, $\beta\geq 0$, $\gamma>0$ and $c>0$, and a progressively measurable $\R_+$-valued process $(f_t)_{t\in \T}$ such that $\as$, for each $(y,z)\in \R\times\R^d$,\vspace{0.1cm}
$$
{\rm sgn}(y)g(\omega,t,y,z)\leq f_t(\omega)+\beta|y|+\gamma |z|^\alpha\ \ \ \ {\rm and}\ \ \ \ |g(\omega,t,y,z)|\leq f_t(\omega)+ h(|y|)+c|z|^2,
$$
where $h(\cdot)$ is a nondecreasing, continuous and deterministic
function with $h(0)=0$.
\end{enumerate}
In this case, one only needs to replace the process $|g(t,0,0)|$ in the conditions of \cref{thm:3.1} and \cref{Pro:3.3} with the process $f_t$.
\end{rmk}\vspace{-0.2cm}

\section{Uniqueness and comparison theorem of the solutions}
\label{sec:4}
\setcounter{equation}{0}

In this section, we will prove the uniqueness and comparison theorem for the unbounded solutions of BSDE \eqref{eq:1.1} with the terminal condition $\xi$ and the generator $g$ satisfying assumption \ref{H1} with parameters $\alpha,\beta$ and $\gamma$, and the following two assumptions \ref{H2} and \ref{H3}:
\begin{enumerate}
\renewcommand{\theenumi}{(H2)}
\renewcommand{\labelenumi}{\theenumi}
\item\label{H2}
$\as$, \ the generator $g$ is convex or concave with respect to the variables $(y,z)$.
\end{enumerate}

\begin{enumerate}
\renewcommand{\theenumi}{(H3)}
\renewcommand{\labelenumi}{\theenumi}
\item\label{H3}
The terminal condition $\xi+\int_0^T |g(t,0,0)|{\rm d}t$ has sub-exponential moments of any order, i.e., for any $p>0$, we have
\begin{equation}\label{eq:4.1}
\begin{array}{lll}
\hspace*{-0.5cm}\Dis \E\left[\psi\left(|\xi|+\int_0^T|g(t,0,0)|{\rm d}t,\ p\right)\right]& =& \Dis \E\left[\exp\left\{p \left(|\xi|+\int_0^T|g(t,0,0)|{\rm d}t\right)^{\frac{2}{\alpha^*}} \right\}\right]\vspace{0.1cm}\\
&<& +\infty.\vspace{0.1cm}
\end{array}
\end{equation}
\end{enumerate}

\begin{thm}\label{thm:4.1}
Assume that $\xi$ is a terminal condition, $g$ is a generator which is continuous in $(y,z)$ and satisfies assumption \ref{H1} with parameters $\alpha$, $\beta$ and $\gamma$.\vspace{0.1cm}

If the generator and the terminal condition further satisfy assumptions \ref{H2} and \ref{H3}, then BSDE$(\xi,g)$ admits a unique solution $(Y_t,Z_t)_{t\in\T}$ such that $\sup_{t\in\T}|Y_t|$ has sub-exponential moments of any order, i.e.,
\begin{equation}\label{eq:4.2}
\RE p>0,\ \ \ \E\left[\psi(\sup\limits_{t\in\T}|Y_t|,\ p)\right]=\E\left[\exp\left\{p \left(\sup\limits_{t\in\T}|Y_t|\right)^{\frac{2}{\alpha^*}} \right\}\right]<+\infty.
\end{equation}
Furthermore, $Z_\cdot\in \mcal^p$ for all $p>0$.
\end{thm}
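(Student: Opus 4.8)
The proof has two parts: existence of a solution with the stated integrability, which is essentially a corollary of \cref{thm:3.1}, and uniqueness, for which I would run the $\theta$-technique on the a priori estimate of \cref{Pro:3.3}.

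\smallskip
\emph{Existence and the moment bounds.} Since \ref{H3} gives \eqref{eq:3.3} for every $\mu>0$, \cref{thm:3.1} already yields a solution $(Y_\cdot,Z_\cdot)$ with $Z_\cdot\in\mcal^2$ satisfying, for every admissible $\mu$ and the corresponding $\eps>0$ (so that $\tilde\mu_{\alpha,\gamma,\eps}(T)=\mu$, resp.\ $\bar\mu_{\alpha,\beta,\gamma,\eps}(T)=\mu$, and $\tilde\mu_{\alpha,\gamma,\eps}(\cdot)\ge\eps$), the bound $\psi(|Y_t|,\eps)\le C_{\mu,\alpha,\eps}\,\E[\psi(L,\mu)\mid\F_t]$, $t\in\T$, where $L:=|\xi|+\int_0^T|g(s,0,0)|\,{\rm d}s$. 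As $\psi(\cdot,\eps)$ is continuous and increasing and $t\mapsto Y_t$ is continuous, taking suprema gives $\psi(\sup_t|Y_t|,\eps)\le C_{\mu,\alpha,\eps}\sup_t\E[\psi(L,\mu)\mid\F_t]$; choosing $\mu$ close enough to the threshold that $\eps<p$ and applying Doob's $L^{p/\eps}$‑inequality together with $\E[\psi(L,\mu)^{p/\eps}]=\E[\exp\{(p\mu/\eps)L^{2/\alpha^*}\}]<+\infty$ (by \ref{H3}) produces \eqref{eq:4.2}. For the integrability of $Z$, I would apply It\^o's formula to $|Y_t|^2$, bound $g$ via \ref{H1}, use $\int_0^T|Z_s|^\alpha{\rm d}s\le T^{1-\alpha/2}(\int_0^T|Z_s|^2{\rm d}s)^{\alpha/2}$ and Young's inequality to absorb half of $\int_0^T|Z_s|^2{\rm d}s$, and estimate $\int_0^T Y_sZ_s\,{\rm d}B_s$ by the Burkholder--Davis--Gundy inequality; after localising by $\sigma_N:=\inf\{t:\int_0^t|Z_s|^2{\rm d}s\ge N\}$ to make the computation rigorous, one obtains $\|\int_0^T|Z_s|^2{\rm d}s\|_{L^{p/2}}$ controlled by powers of $\|\sup_t|Y_t|\|_{L^p}$, $\|\xi\|_{L^p}$ and $\|\int_0^T|g_0|\,{\rm d}s\|_{L^p}$, all finite for every $p$ by \eqref{eq:4.2} and \ref{H3}; hence $Z_\cdot\in\mcal^p$ for all $p>0$. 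The same computation shows that \emph{any} solution obeying \eqref{eq:4.2} has $Z_\cdot\in\mcal^p$ for all $p$.

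\smallskip
\emph{Uniqueness, set-up.} Replacing $(g,\xi,Y,Z)$ by $(\hat g,-\xi,-Y,-Z)$ with $\hat g(\omega,t,y,z):=-g(\omega,t,-y,-z)$ turns a concave generator into a convex one while preserving \ref{H1}, \ref{H3} and \eqref{eq:4.2}, so we assume $g$ convex. Let $(Y^1,Z^1)$ and $(Y^2,Z^2)$ be two solutions obeying \eqref{eq:4.2} (hence $Z^i\in\mcal^p$ for all $p$). Fix $\theta\in(0,1)$ and $\eps_0>0$, and put $\delta^\theta Y:=Y^1-\theta Y^2$, $\delta^\theta Z:=Z^1-\theta Z^2$, $\gamma':=\gamma(1-\theta)^{1-\alpha}$, $L':=\xi^++\int_0^T|g(s,0,0)|\,{\rm d}s$. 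Convexity of $g$ gives, $\ass$,
$$
g(s,Y^1_s,Z^1_s)-\theta g(s,Y^2_s,Z^2_s)\le(1-\theta)\,g\Big(s,\tfrac{\delta^\theta Y_s}{1-\theta},\tfrac{\delta^\theta Z_s}{1-\theta}\Big)\le(1-\theta)|g(s,0,0)|+\beta|\delta^\theta Y_s|+\gamma'|\delta^\theta Z_s|^\alpha ,
$$
so the It\^o--Tanaka argument behind \cref{Pro:3.3} and its one-sided variant in \cref{rmk:3.4} applies to $(\delta^\theta Y)^+$, with $\xi^+$ replaced by $(1-\theta)\xi^+$, $|g_0|$ by $(1-\theta)|g_0|$ and $\gamma$ by $\gamma'$; the localisation in this unbounded setting is legitimate because $\sup_t|\delta^\theta Y_t|$ has sub-exponential moments of all orders and $\delta^\theta Z\in\mcal^p$ for all $p$. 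Writing $\mu^\theta:=\tilde\mu_{\alpha,\gamma',\eps_0}$ when $\beta=0$ and $\mu^\theta:=\bar\mu_{\alpha,\beta,\gamma',\eps_0}$ when $\beta>0$, this gives, for $t\in\T$,
$$
\exp\!\big(\mu^\theta(t)\,((\delta^\theta Y_t)^+)^{2/\alpha^*}\big)\le\E\Big[\exp\!\big(\mu^\theta(T)\,((1-\theta)L'+k_{\alpha,\eps_0})^{2/\alpha^*}\big)\,\Big|\,\F_t\Big].
$$

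\smallskip
\emph{The double limit (the hard part).} From \eqref{eq:2.15}--\eqref{eq:2.17} one reads the exact scaling $\tilde c_{\alpha,\gamma'}=\tilde c_{\alpha,\gamma}(1-\theta)^{-2(\alpha-1)/(2-\alpha)}$, whence $\mu^\theta(t)\ge\tilde\mu^0_{\alpha,\gamma,t}\,(1-\theta)^{-2/\alpha^*}$ for $t>0$ while $\mu^\theta(T)(1-\theta)^{2/\alpha^*}\to C_{\eps_0}:=(\tilde c_{\alpha,\gamma}(1+\eps_0)T)^{(2-\alpha)/\alpha}$ as $\theta\to1$ (and the analogous facts with $\bar\mu$, $\bar\mu^0_{\alpha,\beta,\gamma,t}$ when $\beta>0$). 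Using $(a+b)^{2/\alpha^*}\le a^{2/\alpha^*}+b^{2/\alpha^*}$ on the right-hand side, dividing by $\mu^\theta(t)$, dominating $\exp((C_{\eps_0}+1)L'^{2/\alpha^*})$ by \ref{H3}, and letting $\theta\to1$ (so that $(\delta^\theta Y_t)^+\to(Y^1_t-Y^2_t)^+$ a.s.), one is left, with $k_{\alpha,\eps_0}^{2/\alpha^*}=\frac{(1+\eps_0)^{(2-\alpha)/\alpha}}{2(\alpha-1)\eps_0((1+\eps_0)^{(2-\alpha)/\alpha}-1)}$, with the deterministic bound
$$
\big((Y^1_t-Y^2_t)^+\big)^{2/\alpha^*}\le\frac{C_{\eps_0}\,k_{\alpha,\eps_0}^{2/\alpha^*}}{\tilde\mu^0_{\alpha,\gamma,t}},\qquad t\in(0,T].
$$
Because $C_{\eps_0}\,k_{\alpha,\eps_0}^{2/\alpha^*}\sim\frac{(\tilde c_{\alpha,\gamma}T)^{(2-\alpha)/\alpha}}{2(\alpha-1)}\,\eps_0^{-2(\alpha-1)/\alpha}\to0$ as $\eps_0\to+\infty$, we conclude $Y^1_t\le Y^2_t$ on $(0,T]$, hence on $\T$ by continuity; applying the same computation to $Y^2-\theta Y^1$ (again only convexity of $g$ is used) gives $Y^2\le Y^1$, so $Y^1\equiv Y^2$, and then $\int_t^T(Z^1_s-Z^2_s)\,{\rm d}B_s$ is a continuous local martingale of finite variation, whence $Z^1=Z^2$ almost everywhere. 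The real obstacle is this double limit: in contrast with the quadratic case the sub-quadratic a priori estimate unavoidably carries the shift $k_{\alpha,\eps_0}$, so that the vanishing of $(1-\theta)\xi$ does not by itself kill the exponential constant; one must exploit quantitatively that $\mu^\theta(t)$ already blows up like $(1-\theta)^{-2/\alpha^*}$ for $t>0$ and that the normalised shift constant $C_{\eps_0}k_{\alpha,\eps_0}^{2/\alpha^*}$ tends to $0$ as $\eps_0\to\infty$ — both being consequences of the explicit formulae in \cref{Pro:2.1}.
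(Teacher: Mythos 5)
Your proposal is correct, but it departs from the paper's route in two places, both worth comparing. For the moment bound \eqref{eq:4.2} you argue exactly as the paper does (take the a priori estimate of \cref{Pro:3.3}/\cref{thm:3.1} with the parameter $\eps$ tied to the order $p$, then Doob's maximal inequality under \ref{H3}), so there is nothing to add there. For $Z_\cdot\in\mcal^p$ the paper re-uses the exponential functional: it integrates the inequality \eqref{eq:3.8} for $\tilde\varphi(\cdot,\bar Y_\cdot;1)$, applies BDG together with $\tilde\varphi_x\leq K\tilde\varphi$, and absorbs by Young; your argument instead applies It\^{o} to $|Y_t|^2$ and absorbs $|Y_s||Z_s|^\alpha$ via H\"older and Young at the cost of a power $(\sup_t|Y_t|)^{2/(2-\alpha)}$ — this is more elementary and works precisely because $\alpha<2$ and \eqref{eq:4.2} supplies all polynomial moments, though it exploits the sub-quadratic structure rather than the convex test function. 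The genuine divergence is uniqueness: the paper obtains it as a corollary of the comparison \cref{thm:4.2}, whose $\theta$-technique works with the \emph{normalized} difference $\delta_\theta U=\frac{Y-\theta Y'}{1-\theta}$; the normalization keeps the original constants $(\beta,\gamma)$ in the one-sided bound \eqref{eq:4.11}, makes the terminal datum $\delta_\theta U_T^+\leq\xi^+$ uniformly in $\theta$, and so yields $(Y_t-\theta Y'_t)^+\leq(1-\theta)\times(\text{a finite quantity independent of }\theta)$, after which a single limit $\theta\To 1$ finishes. You keep the un-normalized difference $Y^1-\theta Y^2$, pay the blown-up growth constant $\gamma(1-\theta)^{1-\alpha}$, and then must run the quantitative double limit ($\theta\To1$, then $\eps_0\To\infty$) using the explicit scalings of \cref{Pro:2.1} ($\tilde c_{\alpha,\gamma'}=\tilde c_{\alpha,\gamma}(1-\theta)^{-2(\alpha-1)/(2-\alpha)}$, $k_{\alpha,\eps_0}^{2/\alpha^*}\sim(2(\alpha-1)\eps_0)^{-1}$); I checked these computations and the limits close as you claim (the localization being justified, as you note, by \eqref{eq:4.2} for both solutions and \ref{H3}), so your proof is valid. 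What the paper's normalization buys is that the ``shift constant'' difficulty you single out as the hard part simply never appears, and the same one-pass argument delivers the full comparison theorem between two different pairs $(\xi,g)$ and $(\xi',g')$ (needed later, e.g.\ in \cref{sec:5,sec:6,sec:7}), whereas your version, as written, yields uniqueness only — though it would extend to comparison by replacing $(1-\theta)\xi^+$ with the bound on $(\xi-\theta\xi')^+$.
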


\begin{proof} Firstly, since the generator $g$ satisfies \ref{H1} and \eqref{eq:4.1} holds, it follows from \cref{thm:3.1} together with its proof that BSDE$(\xi,g)$ admits a solution $(Y_t,Z_t)_{t\in\T}$ such that for each $\eps>0$, $\psi\left(|Y_t|,\ \tilde\mu_{\alpha,\gamma,\eps}(t)\right)$ belongs to class (D) for $\beta=0$, $\psi\left(|Y_t|,\ \bar\mu_{\alpha,\beta,\gamma,\eps}(t)\right)$ belongs to class (D) for $\beta>0$, and $Z_\cdot\in \mcal^2$.\vspace{0.1cm}

Now, we show \eqref{eq:4.2}. Indeed, since \eqref{eq:4.1} holds, by virtue of \cref{Pro:3.3} and \cref{thm:3.1} together with their proofs we can conclude that for each $p>0$, $\ps$, for each $t\in\T$,
$$
\psi\left(|Y_t|,\ p\right)\leq \psi\left(|Y_t|,\ \tilde\mu_{\alpha,\gamma,p}(t)\right)\leq \tilde C_{\alpha,\gamma,p,T}\E\left[\left.\psi\left(|\xi|+\int_0^T|g(s,0,0)|{\rm d}s,\  \tilde\mu_{\alpha,\gamma,p}(T)\right)\right|\F_t\right]
$$
holds for $\beta=0$, and
$$
\psi\left(|Y_t|,\ p\right)\leq
\psi\left(|Y_t|,\ \bar\mu_{\alpha,\beta,\gamma,p}(t)\right)\leq \bar C_{\alpha,\beta,\gamma,p,T}\E\left[\left.\psi\left(|\xi|+\int_0^T|g(s,0,0)|{\rm d}s, \ \bar\mu_{\alpha,\beta,\gamma,p}(T)\right)\right|\F_t\right]
$$
holds for $\beta>0$, where $\tilde\mu_{\alpha,\gamma,p}(\cdot)$, $k_{\alpha,p}$ and $\bar\mu_{\alpha,\beta,\gamma,p}(\cdot)$ are respectively defined in \eqref{eq:2.15} and \eqref{eq:2.16},
\begin{equation}\label{eq:4.3}
\tilde C_{\alpha,\gamma,p,T}:= \exp\left(\tilde\mu_{\alpha,\gamma,p}(T) \ k_{\alpha,p}^{\frac{2}{\alpha^*}}\right) \ \ \ {\rm and}\ \ \ \bar C_{\alpha,\beta,\gamma,p,T}:=\exp\left(\bar\mu_{\alpha,\beta,\gamma,p}(T) \ k_{\alpha,p}^{\frac{2}{\alpha^*}}\right).
\end{equation}
Consequently, in the case of $\beta=0$, for each $t\in\T$ and $p>0$, we can derive
\begin{equation}\label{eq:4.4}
\psi\left(\sup\limits_{t\in\T}|Y_t|,\ p\right)\leq \tilde C_{\alpha,\gamma,p,T}\sup\limits_{t\in\T}\left\{\E\left[\left.\psi
\left(|\xi|+\int_0^T|g(s,0,0)|{\rm d}s,\  \tilde\mu_{\alpha,\gamma,p}(T)\right)\right|\F_t\right]\right\},
\end{equation}
and in the case of $\beta>0$,
\begin{equation}\label{eq:4.5}
\psi\left(\sup\limits_{t\in\T}|Y_t|,\ p\right)\leq \bar C_{\alpha,\beta,\gamma,p,T}\sup\limits_{t\in\T}\left\{\E\left[\left.\psi
\left(|\xi|+\int_0^T|g(s,0,0)|{\rm d}s,\  \bar\mu_{\alpha,\beta,\gamma,p}(T)\right)\right|\F_t\right]\right\}.\vspace{0.2cm}
\end{equation}
Thus, with the help of Doob's maximal inequality on martingale, the desired inequality \eqref{eq:4.2} follows from inequalities \eqref{eq:4.4}, \eqref{eq:4.5} and \eqref{eq:4.1}.\vspace{0.1cm}

In the sequel, we prove that $Z_\cdot\in \mcal^p$ for all $p>0$. We only prove the case of $\beta=0$, and the case of $\beta>0$ can be proved in the same way. Let the function $\tilde\varphi(s,x;\eps)$ be defined in \eqref{eq:2.13}. In the case of $\beta=0$, it follows from \eqref{eq:3.8} that there exists a constant $\delta>0$ depending only on $\alpha$ such that for each integer $m\geq 1$,
$$
{\delta\over 2}\int_0^{\sigma_m}|Z_s|^2{\rm d}s\leq \tilde\varphi(\sigma_m,\bar Y_{\sigma_m};1)-\tilde\varphi(0,\bar Y_0;1)+\int_0^{\sigma_m}\tilde\varphi_x(s,\bar Y_s;1){\rm sgn}(Y_s)Z_s\cdot {\rm d}B_s,
$$
where $\bar Y_t:=|Y_t|+\int_0^t|g(s,0,0)|{\rm d}s$ and $\sigma_m$ is a stopping time defined by
$$
\sigma_m:=\inf\left\{ s\in [0,T]:\ \int_0^s \left(\tilde\varphi_x(r,\bar Y_r;1)\right)^2 |Z_r|^2{\rm d}r\geq m\right\}\wedge T.
$$
Then for each real $p>0$, we have
$$
\begin{array}{ll}
&\Dis \left(\int_0^{\sigma_m}|Z_s|^2{\rm d}s\right)^{p\over 2}\vspace{0.2cm}\\
\leq &\Dis \left({4\over \delta}\right)^{p\over 2}\left[\left(\tilde\varphi(\sigma_m,\bar Y_{\sigma_m};1)\right)^{p\over 2}+\sup_{t\in \T}\left|\int_0^{t\wedge\sigma_m}\tilde\varphi_x(s,\bar Y_s;1){\rm sgn}(Y_s)Z_s\cdot {\rm d}B_s\right|^{p\over 2}\right].
\end{array}
$$
In view of inequality $(a+b)^\lambda\leq a^\lambda+b^\lambda$ for $a,b\geq 0$ and $\lambda\in (0,1)$ and H\"{o}lder's inequality together with \eqref{eq:2.13} , \eqref{eq:4.1} and \eqref{eq:4.2}, we get that for each $q>1$,
$$
\begin{array}{ll}
&\Dis \E\left[\left(\sup_{t\in\T}\tilde\varphi(t,\bar Y_t;1)\right)^q\right]\vspace{0.2cm}\\
\leq &\Dis \left(\tilde C_{\alpha,\gamma,1,T}\right)^q\E\left[\psi\left(\sup_{t\in\T}|Y_t|+\int_0^T|g(s,0,0)|{\rm d}s,\  q\tilde\mu_{\alpha,\gamma,1}(T)\right)\right]<+\infty,
\end{array}
$$
where $\tilde\mu_{\alpha,\gamma,1}(\cdot)$ and $\tilde C_{\alpha,\gamma,1,T}$ are respectively defined in \eqref{eq:2.15} and \eqref{eq:4.3}. Note from \eqref{eq:2.5} that for each $s\in \T$ and $x\geq 0$, we have $\tilde\varphi_x(s,x;1)\leq K \tilde\varphi(s,x;1)$ with
$$K:={2(\alpha-1)\tilde\mu_{\alpha,\gamma,1}(T)\over  \alpha k_{\alpha,1}^{{2-\alpha\over \alpha}} },$$
where $k_{\alpha,1}$ is defined in \eqref{eq:2.15}. It follows from the BDG inequality that there exists a constant $C>0$ depending only on $(p,\alpha)$ such that for each $m\geq 1$,
$$
\begin{array}{ll}
& \Dis \left({4\over \delta}\right)^{p\over 2}\E\left[\sup_{t\in \T}\left|\int_0^{t\wedge\sigma_m}\tilde\varphi_x(s,\bar Y_s;1){\rm sgn}(Y_s)Z_s\cdot {\rm d}B_s\right|^{p\over 2}\right]\vspace{0.1cm}\\
\leq & \Dis C
\E\left[\left(K\sup_{t\in\T}\tilde\varphi(t,\bar Y_t;1)\right)^{p\over 2}   \left(\int_0^{\sigma_m}|Z_s|^2{\rm d}s\right)^{p\over 4}\right]\vspace{0.1cm}\\
\leq & \Dis {1\over 2}\E\left[\left(\int_0^{\sigma_m}|Z_s|^2{\rm d}s\right)^{p\over 2}\right]+ {C^2K^p\over 2}
\E\left[\left(\sup_{t\in\T}\tilde\varphi(t,\bar Y_t;1)\right)^p \right].
\end{array}
$$
Combining the previous three inequalities yields the existence of a constant $\bar C>0$ depending only on $(p,\alpha,\gamma,T)$ such that for each $m\geq 1$,
$$
\E\left[\left(\int_0^{\sigma_m}|Z_s|^2{\rm d}s\right)^{p\over 2}\right]\leq \bar C \E\left[\psi\left(\sup_{t\in\T}|Y_t|+\int_0^T|g(s,0,0)|{\rm d}s,\  p\tilde\mu_{\alpha,\gamma,1}(T)\right)\right]<+\infty,
$$
from which the conclusion that $Z\in \mcal^p$ for all $p>0$ follows using Fatou's lemma.

Finally, the uniqueness part is a direct consequence of the following comparison theorem---\cref{thm:4.2}. \cref{thm:4.1} is then proved.
\end{proof}

Let us turn to the comparison theorem of the unbounded solutions.

\begin{thm}\label{thm:4.2}
Let $\xi$ and $\xi'$ be two terminal conditions, $g$ and $g'$ be two generators which are continuous with respect to the state variables $(y,z)$, and $(Y_t, Z_t)_{t\in\T}$ and $(Y'_t, Z'_t)_{t\in\T}$ be respectively a solution to BSDE$(\xi, g)$ and BSDE$(\xi', g')$ such that
\begin{equation}\label{eq:4.6}
\RE p>0,\ \ \ \E\left[\psi\left(\sup\limits_{t\in\T}(|Y_t|+|Y'_t|)+\int_0^T
\left(|g(t,0,0)|+|g'(t,0,0)|\right){\rm d}t,\ p\right)\right]<+\infty.
\end{equation}

Assume that $\ps$, $\xi\leq \xi'$. If $g$ (resp. $g'$) verifies assumptions \ref{H1} and \ref{H2}, and
\begin{equation}\label{eq:4.7}
\as,\ \ \ g(t,Y'_t,Z'_t)\leq g'(t,Y'_t,Z'_t)\ \ \ ({\rm resp.}\  \ g(t,Y_t,Z_t)\leq g'(t,Y_t,Z_t)\ ),
\end{equation}
then $\ps$, for each $t\in\T$, $Y_t\leq Y'_t$.
\end{thm}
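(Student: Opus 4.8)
The plan is to run the $\theta$-technique of \citet{BriandHu2008PTRF} on top of the a priori estimate of \cref{Pro:3.3}. By exchanging the two equations and, if needed, replacing $(Y,Z,\xi,g)$ by $(-Y,-Z,-\xi,\tilde g)$ with $\tilde g(t,y,z):=-g(t,-y,-z)$ --- which preserves \ref{H1}, interchanges convexity and concavity, and reverses the relevant inequalities --- it suffices to treat the case in which $g$ satisfies \ref{H1} and \ref{H2} with $g$ \emph{convex} in $(y,z)$ and such that $\as$, $g(t,Y'_t,Z'_t)\le g'(t,Y'_t,Z'_t)$. Fix $\theta\in(0,1)$ and put $U^\theta_\cdot:=Y_\cdot-\theta Y'_\cdot$, $V^\theta_\cdot:=Z_\cdot-\theta Z'_\cdot$, so that $U^\theta_T=\xi-\theta\xi'$ and ${\rm d}U^\theta_s=-(g(s,Y_s,Z_s)-\theta g'(s,Y'_s,Z'_s)){\rm d}s+V^\theta_s\cdot{\rm d}B_s$. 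Combining the comparison inequality, then the convexity of $g$ (writing $(Y_s,Z_s)$ as the convex combination $\theta\,(Y'_s,Z'_s)+(1-\theta)\,\big(\tfrac{U^\theta_s}{1-\theta},\tfrac{V^\theta_s}{1-\theta}\big)$), and finally \ref{H1}, one gets $\as$
$$
g(s,Y_s,Z_s)-\theta g'(s,Y'_s,Z'_s)\le g(s,Y_s,Z_s)-\theta g(s,Y'_s,Z'_s)\le(1-\theta)\,g\!\left(s,\tfrac{U^\theta_s}{1-\theta},\tfrac{V^\theta_s}{1-\theta}\right)\le(1-\theta)|g(s,0,0)|+\beta|U^\theta_s|+\gamma_\theta|V^\theta_s|^\alpha,
$$
where $\gamma_\theta:=\gamma(1-\theta)^{-(\alpha-1)}>0$.

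Next I would repeat, essentially verbatim, the It\^{o}--Tanaka computation from the proofs of \cref{Pro:3.3} and \cref{rmk:3.4}, now applied to the nonnegative process $\bar U^\theta_s:=(U^\theta_s)^++(1-\theta)\int_0^s|g(r,0,0)|{\rm d}r$ and to $\tilde\varphi(s,\cdot\,;\eps)$ if $\beta=0$ (resp. $\bar\varphi(s,\cdot\,;\eps)$ if $\beta>0$) of \cref{Pro:2.1}, but with $\gamma$ replaced throughout by $\gamma_\theta$; the display above is exactly the growth bound (on $\{U^\theta_s>0\}$, where it matters) that makes \eqref{eq:2.18} (resp. \eqref{eq:2.19}) applicable after localization by the stopping times $\sigma_m:=\inf\{s:\int_0^s(\tilde\varphi_x(r,\bar U^\theta_r;\eps))^2|V^\theta_r|^2{\rm d}r\ge m\}\wedge T$, the local-time term having the favorable sign as in \cref{rmk:3.4}. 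Since $\xi\le\xi'$ yields $\bar U^\theta_T\le(1-\theta)W$ with $W:=(\xi')^++\int_0^T|g(r,0,0)|{\rm d}r$, and $W$ (like $\sup_t(|Y_t|+|Y'_t|)+\int_0^T(|g(t,0,0)|+|g'(t,0,0)|){\rm d}t$) has sub-exponential moments of every order by \eqref{eq:4.6}, every quantity entering the localization argument is integrable, and Fatou's lemma plus dominated convergence give, for $\beta=0$ and all $t\in\T$, $\ps$,
$$
\psi\!\big((U^\theta_t)^+,\ \tilde\mu_{\alpha,\gamma_\theta,\eps}(t)\big)\ \le\ \exp\!\Big(\tilde\mu_{\alpha,\gamma_\theta,\eps}(T)\,k_{\alpha,\eps}^{\frac{2}{\alpha^*}}\Big)\,\E\!\left[\left.\psi\!\Big(W,\ \tilde\mu_{\alpha,\gamma_\theta,\eps}(T)(1-\theta)^{\frac{2}{\alpha^*}}\Big)\,\right|\,\F_t\right],
$$
and the obvious analogue with $\bar\mu_{\alpha,\beta,\gamma_\theta,\eps}(\cdot)$ when $\beta>0$.

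The conclusion then comes from a double limit. Fix $\eps>0$ and $t\in(0,T]$ and abbreviate $\mu_\theta(\cdot)=\tilde\mu_{\alpha,\gamma_\theta,\eps}(\cdot)$ (resp. $\bar\mu_{\alpha,\beta,\gamma_\theta,\eps}(\cdot)$). By \eqref{eq:2.15}--\eqref{eq:2.17}, $\tilde c_{\alpha,\gamma_\theta}=\tilde c_{\alpha,\gamma}(1-\theta)^{-\frac{2(\alpha-1)}{2-\alpha}}\to+\infty$ (resp. $\bar c_{\alpha,\beta,\gamma_\theta}\to+\infty$) as $\theta\to1$, whence elementary computations give (i) $\mu_\theta(t)\to+\infty$; (ii) $\mu_\theta(T)/\mu_\theta(t)\to(T/t)^{\frac{2-\alpha}{\alpha}}$ (resp. to $((e^{cT}-1)/(e^{ct}-1))^{\frac{2-\alpha}{\alpha}}$ with $c=\tfrac{2(\alpha-1)\beta}{2-\alpha}$); and (iii) $\mu_\theta(T)(1-\theta)^{\frac{2}{\alpha^*}}\le M_\eps$ for some $M_\eps<+\infty$ not depending on $\theta\in(0,1)$. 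Taking logarithms in the displayed estimate, dividing by $\mu_\theta(t)$, bounding $\psi(W,\mu_\theta(T)(1-\theta)^{2/\alpha^*})\le\psi(W,M_\eps)$ with $\E[\psi(W,M_\eps)|\F_t]<+\infty$ $\ps$, and letting $\theta\to1$ (so that $(U^\theta_t)^+\to(Y_t-Y'_t)^+$ pathwise), one arrives at
$$
\big((Y_t-Y'_t)^+\big)^{\frac{2}{\alpha^*}}\ \le\ \big(T/t\big)^{\frac{2-\alpha}{\alpha}}\,k_{\alpha,\eps}^{\frac{2}{\alpha^*}}\qquad\ps
$$
(with the corresponding bound when $\beta>0$). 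Since $k_{\alpha,\eps}\to0$ as $\eps\to+\infty$, sending $\eps\to+\infty$ forces $(Y_t-Y'_t)^+=0$ $\ps$ for every $t\in(0,T]$, and a density argument together with the pathwise continuity of $Y$ and $Y'$ then yields $Y_t\le Y'_t$ for all $t\in\T$, $\ps$

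The hard part is this last step. As $\theta\uparrow1$ the effective sub-quadratic constant $\gamma_\theta$ --- hence the integrability order $\mu_\theta(T)$ required by \cref{Pro:3.3} --- blows up, which is exactly why \eqref{eq:4.6} is imposed with sub-exponential moments of \emph{every} order rather than of a fixed one. One must verify that the divergence of $\mu_\theta(T)$ in the prefactor $\exp(\mu_\theta(T)k_{\alpha,\eps}^{2/\alpha^*})$ is precisely absorbed by the divergence of $\mu_\theta(t)$ in the left-hand exponent, leaving only the harmless residual $k_{\alpha,\eps}^{2/\alpha^*}$, which is then eliminated by the further limit $\eps\to+\infty$; the uniform bound (iii) is what keeps the conditional expectation on the right-hand side finite and controlled along the whole passage $\theta\to1$.
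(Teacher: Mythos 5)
Your core argument is sound and reaches the conclusion by a genuinely different execution of the same $\theta$-technique. The paper works with the normalized difference $\delta_\theta U=\frac{Y-\theta Y'}{1-\theta}$, whose generator satisfies \ref{H1'} with the \emph{original} constants $(\beta,\gamma)$ uniformly in $\theta$ (inequality \eqref{eq:4.11}); it then invokes \cref{rmk:3.4}/\cref{Pro:3.3} once, with $\eps=1$ fixed, and the factor $1-\theta$ reappears only when translating back, giving $(Y_t-\theta Y'_t)^+\le(1-\theta)\left(\ln\{\cdots\}\right)^{\alpha^*/2}$, so a single limit $\theta\to1$ finishes the proof at every $t\in\T$ at once. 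You instead keep the unnormalized difference $U^\theta=Y-\theta Y'$, which inflates the sub-quadratic constant to $\gamma_\theta=\gamma(1-\theta)^{-(\alpha-1)}$ and forces $\theta$-dependent weights $\mu_\theta(\cdot)$; you then need the asymptotics (i)--(iii), a double limit ($\theta\to1$, then $\eps\to+\infty$ using $k_{\alpha,\eps}\to0$), a restriction to $t>0$ and a continuity/density step to recover $t=0$. Your computations are correct as far as I can check them: the bound $\mu_\theta(T)(1-\theta)^{2/\alpha^*}\le\tilde\mu_{\alpha,\gamma,\eps}(T)$ (and its $\beta>0$ analogue), the ratio limits, and the use of \eqref{eq:4.6} with all orders $p$ to justify the localization for the $\theta$-blown-up integrability requirement are all valid. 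But the paper's normalization buys all of this for free; your route proves the same statement with noticeably heavier bookkeeping.

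The one genuine flaw is the opening reduction. The transformation $\hat Y:=-Y'$, $\hat Y':=-Y$, $\hat\xi:=-\xi'$, $\hat\xi':=-\xi$, $\hat g(t,y,z):=-g'(t,-y,-z)$, $\hat g'(t,y,z):=-g(t,-y,-z)$ maps the case ``$g$ convex, condition $g(t,Y'_t,Z'_t)\le g'(t,Y'_t,Z'_t)$'' to the case ``$g'$ concave, condition $g(t,Y_t,Z_t)\le g'(t,Y_t,Z_t)$'' and back, and it maps ``$g$ concave, condition at $(Y',Z')$'' to ``$g'$ convex, condition at $(Y,Z)$'' and back; exchanging the equations outright reverses the desired conclusion and is not admissible. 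Hence the pair of cases where $g$ is concave (equivalently, where $g'$ carries \ref{H1}--\ref{H2} convex) is \emph{not} reduced to the case you treat, contrary to your claim. The repair is routine: either rerun your chain using the convexity of $g'$ and the condition at $(Y_t,Z_t)$, namely $g(s,Y_s,Z_s)-\theta g'(s,Y'_s,Z'_s)\le g'(s,Y_s,Z_s)-\theta g'(s,Y'_s,Z'_s)\le(1-\theta)\,g'\!\left(s,\tfrac{U^\theta_s}{1-\theta},\tfrac{V^\theta_s}{1-\theta}\right)$, or, as the paper does for concave $g$, estimate $\theta Y-Y'$ instead of $Y-\theta Y'$ (cf. \eqref{eq:4.15}--\eqref{eq:4.16}). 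With that repair your proof is complete.
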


\begin{proof}
We first consider the case that the generator $g$ satisfies \ref{H1} with parameters $\alpha$, $\beta$ and $\gamma$, and is convex in $(y,z)$, and $\as$, $g(t,Y'_t,Z'_t)\leq g'(t,Y'_t,Z'_t)$. In order to utilize the convexity of $g$, we use the $\theta$-technique developed in \citet{BriandHu2008PTRF}. For each fixed $\theta\in (0,1)$, define
\begin{equation}\label{eq:4.8}
\delta_\theta  U_\cdot:=\frac{Y_\cdot-\theta Y'_\cdot}{1-\theta}\ \  {\rm and} \ \ \delta_\theta  V_\cdot:=\frac{Z_\cdot-\theta Z'_\cdot}{1-\theta}.
\end{equation}
Then the pair $(\delta_\theta  U_\cdot,\delta_\theta  V_\cdot)$ verifies the following BSDE:
\begin{equation}\label{eq:4.9}
  \delta_\theta  U_t=\delta_\theta  U_T +\int_t^T \delta_\theta g (s,\delta_\theta  U_s,\delta_\theta  V_s) {\rm d}s-\int_t^T \delta_\theta  V_s \cdot {\rm d}B_s, \ \ \ \ t\in\T,
\end{equation}
where $\ass$, for each $(y,z)\in \R\times\R^d$,
\begin{equation}\label{eq:4.10}
\begin{array}{lll}
\Dis \delta_\theta g(s,y,z)&:=& \Dis \frac{1}{1-\theta}\left[\  g(s,(1-\theta)y+\theta Y'_s,(1-\theta)z+\theta Z'_s)-\theta g(s, Y'_s, Z'_s)\ \right]\vspace{0.2cm}\\
&& \Dis +\frac{\theta}{1-\theta}\left[\  g(s,Y'_s, Z'_s)-g'(s,Y'_s, Z'_s)\ \right].
\end{array}
\end{equation}
It follows from the assumptions that $\ass$, for each $(y,z)\in \R\times \R^d$,
\begin{equation}\label{eq:4.11}
\delta_\theta g(s,y,z){\bf 1}_{y>0}\leq g(s,y,z){\bf 1}_{y>0}\leq |g(s,0,0)|+\beta |y|+\gamma |z|^\alpha,
\end{equation}
which means that the generator $\delta_\theta g$ satisfies assumption \ref{H1'} defined in \cref{rmk:3.4} . Thus, in view of \eqref{eq:4.6} and \eqref{eq:4.11} and by virtue of \cref{rmk:3.4} together with the proof of \cref{Pro:3.3}, we can conclude for BSDE \eqref{eq:4.9} that $\ps$, for each $t\in\T$, the inequality
\begin{equation}\label{eq:4.12}
\psi\left(\delta_\theta  U_t^+,\ 1\right)\leq \tilde C_{\alpha,\gamma,1,T}\E\left[\left.\psi\left(\delta_\theta  U_T^+ +\int_0^T|g(s,0,0)|{\rm d}s,\  \tilde\mu_{\alpha,\gamma,1}(T)\right)\right|\F_t\right]
\end{equation}
holds for $\beta=0$, and the inequality
\begin{equation}\label{eq:4.13}
\psi\left(\delta_\theta  U_t^+,\ 1\right)\leq
\bar C_{\alpha,\beta,\gamma,1,T}\E\left[\left.\psi\left(\delta_\theta  U_T^+ +\int_0^T|g(s,0,0)|{\rm d}s, \ \bar\mu_{\alpha,\beta,\gamma,1}(T)\right)\right|\F_t\right]
\end{equation}
holds for $\beta>0$, where $\tilde\mu_{\alpha,\gamma,1}(\cdot)$, $\bar\mu_{\alpha,\beta,\gamma,1}(\cdot)$,
$\tilde C_{\alpha,\gamma,1,T}$ and $\bar C_{\alpha,\beta,\gamma,1,T}$ are respectively defined in \eqref{eq:2.15}, \eqref{eq:2.16} and \eqref{eq:4.3}.
Moreover, in view of the fact that
\begin{equation}\label{eq:4.14}
\delta_\theta  U_T^+=\frac{(\xi-\theta \xi')^+}{1-\theta}=\frac{\left[\xi-\theta \xi+\theta(\xi-\xi')\right]^+}{1-\theta}\leq \xi^+,
\end{equation}
by \eqref{eq:4.12} and \eqref{eq:4.13} we derive that $\ps$, for each $t\in\T$, for $\beta=0$,
$$
\left(Y_t-\theta Y'_t\right)^+ \leq (1-\theta)
\left(\ln\left\{\tilde C_{\alpha,\gamma,1,T}\E\left[\left.\psi\left(\xi^+ +\int_0^T|g(s,0,0)|{\rm d}s, \ \tilde\mu_{\alpha,\gamma,1}(T)\right)\right|\F_t\right]\right\}
\right)^{\frac{\alpha^*}{2}},
$$
and for $\beta>0$,
$$
\left(Y_t-\theta Y'_t\right)^+\leq (1-\theta)
\left(\ln\left\{\bar C_{\alpha,\beta,\gamma,1,T} \E\left[\left.\psi\left(\xi^+ +\int_0^T|g(s,0,0)|{\rm d}s, \ \bar\mu_{\alpha,\beta,\gamma,1}(T)\right)\right|\F_t\right]
\right\}\right)^{\frac{\alpha^*}{2}}.
$$
Consequently, the desired conclusion follows by sending $\theta\To 1$ in the previous two\vspace{0.2cm} inequalities.

For the case that the generator $g$ is concave with respect to the state variables $(y,z)$, we need to use $\theta Y_\cdot-Y'_\cdot$ and $\theta Z_\cdot-Z'_\cdot$ instead of $Y_\cdot-\theta Y'_\cdot$ and $Z_\cdot-\theta Z'_\cdot$ in \eqref{eq:4.8} respectively. And, in this case the generator $\delta_\theta  g$ in \eqref{eq:4.10} should be replaced with
\begin{equation}\label{eq:4.15}
\begin{array}{lll}
\Dis \delta_\theta g(s,y,z)&:=& \Dis \frac{1}{1-\theta}\left[\  \theta g(s, Y_s, Z_s) -g(s,-(1-\theta)y+\theta Y_s,-(1-\theta)z+\theta Z_s)\ \right]\vspace{0.2cm}\\
&& \Dis +\frac{1}{1-\theta}\left[\  g(s,Y'_s, Z'_s)-g'(s,Y'_s, Z'_s)\ \right].
\end{array}
\end{equation}
Since $g$ is concave in $(y,z)$, we have, $\ass$, for each $(y,z)\in \R\times \R^d$,
$$
g(s,-(1-\theta)y+\theta Y_s,-(1-\theta)z+\theta Z_s)\geq \theta g(s,Y_s,Z_s)+(1-\theta)g(t,-y,-z),
$$
and then, \eqref{eq:4.11} needs to be replaced by
\begin{equation}\label{eq:4.16}
\delta_\theta g(s,y,z){\bf 1}_{y>0}\leq -g(s,-y,-z){\bf 1}_{y>0}\leq |g(s,0,0)|+\beta |y|+\gamma |z|^\alpha,
\end{equation}
which means that the generator $\delta_\theta g$ still satisfies assumption \ref{H1'}. Consequently, both \eqref{eq:4.12} and \eqref{eq:4.13} still hold.
Moreover, we use
\[
\delta_\theta U_T^+=\frac{(\theta \xi-\xi')^+}{1-\theta}=\frac{\left[\theta \xi- \xi+(\xi-\xi')\right]^+}{1-\theta}\leq (-\xi)^+=\xi^-,
\]
instead of \eqref{eq:4.14}, and by virtue of \eqref{eq:4.12} and \eqref{eq:4.13}, derive that $\ps$, for each $t\in\T$, for $\beta=0$,
$$
\left(\theta Y_t-Y'_t\right)^+ \leq (1-\theta)
\left(\ln\left\{\tilde C_{\alpha,\gamma,1,T}\E\left[\left.\psi\left(\xi^- +\int_0^T|g(s,0,0)|{\rm d}s, \ \tilde\mu_{\alpha,\gamma,1}(T)\right)\right|\F_t\right]\right\}
\right)^{\frac{\alpha^*}{2}},
$$
and for $\beta>0$,
$$
\left(\theta Y_t-Y'_t\right)^+\leq (1-\theta)
\left(\ln\left\{\bar C_{\alpha,\beta,\gamma,1,T} \E\left[\left.\psi\left(\xi^- +\int_0^T|g(s,0,0)|{\rm d}s, \ \bar\mu_{\alpha,\beta,\gamma,1}(T)\right)\right|\F_t\right]
\right\}\right)^{\frac{\alpha^*}{2}}.
$$
Thus, the desired conclusion follows by sending $\theta\To 1$ in the previous two inequalities.\vspace{0.1cm}

Finally, in the same way as above, one can prove the desired conclusion under the conditions that the generator $g'$ satisfies assumptions \ref{H1} and \ref{H2}, and $\as$, $g(t,Y_t,Z_t)\leq g'(t,Y_t,Z_t)$. The proof of \cref{thm:4.2} is then complete.
\end{proof}

\begin{rmk}\label{rmk:4.3}
Clearly, if $\as$, for each $(y,z)\in \R\times\R^d$, $g(t,y,z)\leq g'(t,y,z)$, then the inequality \eqref{eq:4.7} holds.
\end{rmk}

\begin{rmk}\label{rmk:4.4}
From the above proofs, it is not hard to verify that the assumption \ref{H1} in \cref{thm:4.1} and \cref{thm:4.2} can also be relaxed to the weaker assumption \ref{H1"} defined in \cref{rmk:3.5}.
\end{rmk}

\section{An extension to the comparison theorem}
\label{sec:5}
\setcounter{equation}{0}

In this section, we first introduce a general non-convexity (non-convexity) assumption \ref{H2'} on the generator $g$, and then illustrate that it is strictly weaker than the assumption \ref{H2} provided that the assumption \ref{H1"} or \ref{H1} holds for $g$. Finally, we prove that Theorems \ref{thm:4.1} and \ref{thm:4.2} hold still under the weaker assumptions \ref{H1"} and \ref{H2'}.
Let us start by introducing assumption \ref{H2'}:
\begin{enumerate}
\renewcommand{\theenumi}{(H2')}
\renewcommand{\labelenumi}{\theenumi}
\item\label{H2'} There exist four real constants $\alpha\in (1,2)$, $\beta\geq 0$, $\gamma>0$ and $k>0$, and a progressively measurable $\R_+$-valued process $(f_t)_{t\in \T}$ such that $\as$, for each $(y_i,z_i)\in \R\times\R^d$, $i=1,2$ and each $\theta\in (0,1)$, it holds that
  \begin{equation}\label{eq:5.1}
  \begin{array}{ll}
  &\Dis {\bf 1}_{\{y_1-\theta y_2>0\}}\left(g(\omega,t,y_1,z_1)-\theta g(\omega,t,y_2,z_2)\right)\vspace{0.1cm}\\
   \leq &\Dis (1-\theta)\left(f_t(\omega)+k|y_2|+\beta \left|\delta_\theta y\right|+\gamma \left|\delta_\theta z\right|^\alpha\right)
   \end{array}
  \end{equation}
   or
  \begin{equation}\label{eq:5.2}
  \begin{array}{ll}
  & \Dis -{\bf 1}_{\{y_1-\theta y_2<0\}}\left(g(\omega,t,y_1,z_1)-\theta g(\omega,t,y_2,z_2)\right)\vspace{0.1cm}\\
   \leq &\Dis (1-\theta)\left(f_t(\omega)+k|y_2|+\beta \left|\delta_\theta y\right|+\gamma \left|\delta_\theta z\right|^\alpha\right),
  \end{array}
  \end{equation}
  where
  $$
  \delta_\theta y:=\frac{y_1-\theta y_2}{1-\theta},\ \ \ \delta_\theta z:=\frac{z_1-\theta z_2}{1-\theta}.\vspace{0.2cm}
  $$
\end{enumerate}

One typical example of \ref{H2'} is
$g(\omega,t,y,z):=g_1(y)+g_2(y)$,
where $g_1:\R\To\R$ is convex or concave with one-sided linear growth, and $g_2:\R\To\R$ is a Lipschitz function, i.e., $g$ is a Lipschitz perturbation of some convex (concave) function.\vspace{0.2cm}

Another typical example of \ref{H2'} is
$\bar g(\omega,t,y,z):=g_3(z)+g_4(z)$,
where $g_3:\R^d\To\R$ is convex or concave with sub-quadratic growth, and $g_4:\R^d\To\R$ is a Lipschitz funtion with bounded support, i.e., $\bar g$ is a locally Lipschitz perturbation of some convex (concave) function.\vspace{0.2cm}

More generally, we have

\begin{pro}\label{pro:5.1}
Assume that the generator $g$ is continuous in $(y,z)$ and
satisfies assumption \ref{H1"}. Then, assumption \ref{H2'} holds for $g$ if it satisfies anyone of the following conditions:
\begin{enumerate}
\item [(i)] $\as$, $g(\omega,t,\cdot,\cdot)$ is convex or concave;

\item [(ii)]$\as$, for each $(y,z)\in \R\times\R^d$, $g(\omega,t,\cdot,z)$ is Lipschitz and $g(\omega,t, y,\cdot)$ is convex or concave;

\item [(iii)] $g(t,y,z)\equiv l(y)q(z)$, where both $l:\R\To\R$ and $q:\R^d\To\R$ are bounded Lipschitz functions, and the function $q(z)$ has a bounded support.
\end{enumerate}
\end{pro}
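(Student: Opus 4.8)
The plan is to verify, in each case, the pointwise inequality \eqref{eq:5.1} (when $g$ is of ``convex type'') or \eqref{eq:5.2} (when $g$ is of ``concave type''). The common mechanism is: rewrite $g(\omega,t,y_1,z_1)-\theta g(\omega,t,y_2,z_2)$ so that the value of $g$ at the shifted point $(\delta_\theta y,\delta_\theta z)$ appears, with every other term proportional to $1-\theta$; then apply the one-sided sub-quadratic half of \ref{H1"}, namely ${\rm sgn}(y)g(\omega,t,y,z)\le f_t(\omega)+\beta|y|+\gamma|z|^\alpha$, with $y=\delta_\theta y$, on $\{\delta_\theta y>0\}=\{y_1-\theta y_2>0\}$ to obtain \eqref{eq:5.1} and on $\{\delta_\theta y<0\}=\{y_1-\theta y_2<0\}$ (where ${\rm sgn}(\delta_\theta y)=-1$) to obtain \eqref{eq:5.2}. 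Throughout I would use the identities $y_1=(1-\theta)\delta_\theta y+\theta y_2$, $y_1-y_2=(1-\theta)(\delta_\theta y-y_2)$, $y_1-\delta_\theta y=\theta(y_2-\delta_\theta y)$ (and their $z$-analogues), the fact $0<\theta<1$, and the elementary bound $|\delta_\theta z|\le 1+|\delta_\theta z|^\alpha$ (valid since $\alpha\ge 1$) to convert any linear $z$-term into $\gamma|\delta_\theta z|^\alpha$ plus a constant that can be absorbed in the coefficient process.

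For (i), when $g(\omega,t,\cdot,\cdot)$ is convex, two-point convexity at $(y_1,z_1)=(1-\theta)(\delta_\theta y,\delta_\theta z)+\theta(y_2,z_2)$ gives $g(\omega,t,y_1,z_1)-\theta g(\omega,t,y_2,z_2)\le(1-\theta)g(\omega,t,\delta_\theta y,\delta_\theta z)$; multiplying by ${\bf 1}_{\{\delta_\theta y>0\}}$ and invoking \ref{H1"} yields \eqref{eq:5.1} with $\beta,\gamma,f_t$ inherited from \ref{H1"} and any $k>0$ (the $k=0$ bound holding already). The concave case is the same computation run for $-g$ and gives \eqref{eq:5.2}. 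For (ii), assume $g$ is $k_0$-Lipschitz in $y$ uniformly and convex in $z$. Applying convexity only in the last slot, $g(\omega,t,y_1,z_1)\le(1-\theta)g(\omega,t,y_1,\delta_\theta z)+\theta g(\omega,t,y_1,z_2)$, so
\[
g(\omega,t,y_1,z_1)-\theta g(\omega,t,y_2,z_2)\le(1-\theta)g(\omega,t,y_1,\delta_\theta z)+\theta k_0|y_1-y_2|.
\]
Then the $y$-Lipschitz bound $g(\omega,t,y_1,\delta_\theta z)\le g(\omega,t,\delta_\theta y,\delta_\theta z)+k_0|y_1-\delta_\theta y|$, together with $|y_1-y_2|=(1-\theta)|\delta_\theta y-y_2|$ and $|y_1-\delta_\theta y|=\theta|y_2-\delta_\theta y|$, gives
\[
g(\omega,t,y_1,z_1)-\theta g(\omega,t,y_2,z_2)\le(1-\theta)\bigl[g(\omega,t,\delta_\theta y,\delta_\theta z)+2k_0(|\delta_\theta y|+|y_2|)\bigr];
\]
multiplying by ${\bf 1}_{\{\delta_\theta y>0\}}$ and using \ref{H1"} gives \eqref{eq:5.1} with constants $\beta+2k_0,\gamma,k=2k_0$ and process $f_t$. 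The concave-in-$z$ sub-case is symmetric and produces \eqref{eq:5.2}.

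The only genuinely new point is (iii). Write $M_l=\sup|l|$, $M_q=\sup|q|$, let $L_l,L_q$ be the Lipschitz constants, and $\mathrm{supp}(q)\subseteq\bar B(0,R)$. Split
\[
g(\omega,t,y_1,z_1)-\theta g(\omega,t,y_2,z_2)=(1-\theta)l(y_1)q(z_1)+\theta(l(y_1)-l(y_2))q(z_1)+\theta l(y_2)(q(z_1)-q(z_2)).
\]
The first term is $\le(1-\theta)M_lM_q$, and the second has absolute value $\le L_lM_q|y_1-y_2|=L_lM_q(1-\theta)|\delta_\theta y-y_2|\le L_lM_q(1-\theta)(|\delta_\theta y|+|y_2|)$; both are of the right $(1-\theta)$-shape. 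The delicate term is $\theta l(y_2)(q(z_1)-q(z_2))$, bounded by $M_l|q(z_1)-q(z_2)|$, for which a plain Lipschitz estimate would produce an illegitimate $|z_2|$-term; here the bounded support of $q$ enters. I would establish the key bound
\[
|q(z_1)-q(z_2)|\le L_q(1-\theta)\bigl(R+|\delta_\theta z|\bigr)
\]
by a dichotomy: if $|z_2|\ge R$ then $q(z_2)=0$ and $|q(z_1)-q(z_2)|=|q(z_1)|\le L_q(R-|z_1|)^+$, while $|z_1|=|\theta z_2+(1-\theta)\delta_\theta z|\ge\theta|z_2|-(1-\theta)|\delta_\theta z|\ge\theta R-(1-\theta)|\delta_\theta z|$ forces $(R-|z_1|)^+\le(1-\theta)(R+|\delta_\theta z|)$; if $|z_2|<R$ then directly $|q(z_1)-q(z_2)|\le L_q|z_1-z_2|=L_q(1-\theta)|\delta_\theta z-z_2|\le L_q(1-\theta)(|\delta_\theta z|+R)$, which carries no $|z_2|$. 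Combining the three estimates and using $|\delta_\theta z|\le1+|\delta_\theta z|^\alpha$ gives, for explicit constants $f,k,\beta,\gamma$ built from $M_l,M_q,L_l,L_q,R$,
\[
g(\omega,t,y_1,z_1)-\theta g(\omega,t,y_2,z_2)\le(1-\theta)\bigl(f+k|y_2|+\beta|\delta_\theta y|+\gamma|\delta_\theta z|^\alpha\bigr)
\]
for all arguments; multiplying by ${\bf 1}_{\{\delta_\theta y>0\}}$ (which only lowers the left side, the right side being nonnegative) yields \eqref{eq:5.1}, so \ref{H2'} holds.

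I expect the main obstacle to be exactly the elimination of the $|z_2|$-term in (iii): one must combine the vanishing of $q$ off a ball with the coercive identity $|z_1|\ge\theta|z_2|-(1-\theta)|\delta_\theta z|$, so that precisely when $z_1$ could sit deep inside $\mathrm{supp}(q)$ while $z_2$ lies outside it, the penalty $|\delta_\theta z|$ is automatically large enough to absorb the $O(1)$ contribution of $q(z_1)$. By contrast, cases (i)--(ii) are routine pointwise instances of the $\theta$-technique of \citet{BriandHu2008PTRF} and use only the one-sided sub-quadratic part of \ref{H1"}, not its auxiliary quadratic bound.
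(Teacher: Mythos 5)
Your proposal is correct, and it is worth comparing with the paper's argument case by case. For (i) you do exactly what the paper does: two-point convexity at $(y_1,z_1)=(1-\theta)(\delta_\theta y,\delta_\theta z)+\theta(y_2,z_2)$ followed by the one-sided bound of \ref{H1"} on $\{\delta_\theta y>0\}$ (resp.\ $\{\delta_\theta y<0\}$ in the concave case). For (ii) your route is a genuine, and in fact cleaner, variant: the paper uses convexity in $z$ at the fixed level $y_2$ and then passes to $g(\omega,t,0,\delta_\theta z)$, invoking the auxiliary two-sided bound $|g(\omega,t,0,z)|\le f_t+c|z|^2$ of \ref{H1"}, which produces a quadratic term in $\delta_\theta z$; you instead convexify in $z$ at the level $y_1$ and then Lipschitz-shift $y_1$ to $\delta_\theta y$, so that the one-sided sub-quadratic bound of \ref{H1"} is applied directly at $(\delta_\theta y,\delta_\theta z)$ on $\{\delta_\theta y>0\}$ and you land immediately on the $\gamma|\delta_\theta z|^\alpha$ form required by \ref{H2'}, at the modest cost of a slightly larger constant ($2k_0$ on both $|y_2|$ and $|\delta_\theta y|$). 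For (iii) your decomposition and your key estimate are genuinely different from the paper's: the paper splits $q(z_1)-\theta q(z_2)=\left(q(z_1)-q(\theta z_2)\right)+\left(q(\theta z_2)-q(z_2)\right)+(1-\theta)q(z_2)$ and kills the troublesome middle term by a dichotomy in $\theta$ (for $\theta\le 1/2$ use boundedness, $2M/(1-\theta)\le 4M$; for $\theta>1/2$ use the Lipschitz bound together with the fact that both arguments lie outside the support unless $|z_2|\le 2R$), whereas you split off $(1-\theta)l(y_1)q(z_1)$ and $\theta l(y_2)\left(q(z_1)-q(z_2)\right)$ and prove $|q(z_1)-q(z_2)|\le L_q(1-\theta)\left(R+|\delta_\theta z|\right)$ by a dichotomy in $|z_2|$ versus $R$, using $|q(z_1)|\le L_q\left(R-|z_1|\right)^+$ and the reverse triangle inequality $|z_1|\ge\theta|z_2|-(1-\theta)|\delta_\theta z|$. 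Both mechanisms are valid and yield \eqref{eq:5.1} with explicit constants; yours isolates more transparently why the bounded support is needed (it is exactly what lets $|\delta_\theta z|$ absorb the $O(1)$ contribution of $q(z_1)$ when $z_2$ is far out), while the paper's $\theta$-dichotomy is shorter to write. One half-sentence you should add in (iii): $q(z_2)=0$ on $\{|z_2|\ge R\}$ (not merely $|z_2|>R$) and the bound $|q(z_1)|\le L_q\left(R-|z_1|\right)^+$ both use the continuity of $q$, by approximating with points of norm $R+\eps$ and letting $\eps\to 0$; this is immediate but currently implicit.
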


Before giving the proof of this proposition, we first make the following important remark.

\begin{rmk}\label{rmk:5.2}
It is easy to verify that if for $i=1,2$, the generator $g_i$ is continuous in $(y,z)$ and satisfies assumption \ref{H1"} together with anyone of (i), (ii) and (iii) in \cref{pro:5.1} (with the same convexity or concavity when available), then $g_1+g_2$ also satisfies assumption \ref{H2'}. Consequently, the generator $g$ satisfying \ref{H2'} may be not necessarily convex (concave) or Lipschitz in the variables $y$ and $z$, and it can have a general growth in $y$.
\end{rmk}

\begin{proof} [Proof of \cref{pro:5.1}] Given $(y_i,z_i)\in \R\times\R^d$, $i=1,2$ and $\theta\in (0,1)$.\vspace{0.2cm}

(i) Assume that $\as$, $g(\omega,t,\cdot,\cdot)$ is convex. In view of \ref{H1"}, if $\delta_\theta y>0$, then
$$
\begin{array}{lll}
g(\omega,t,y_1,z_1)&=&\Dis g\left(\omega,t,\theta y_2+(1-\theta)\delta_\theta y,\theta z_2+(1-\theta)\delta_\theta z\right)\vspace{0.1cm}\\
&\leq & \Dis \theta g(\omega,t,y_2,z_2)+(1-\theta)g\left(\omega,t,\delta_\theta y,\delta_\theta z\right)\vspace{0.1cm}\\
&\leq & \Dis \theta g(\omega,t,y_2,z_2)+(1-\theta)\left(f_t(\omega)+\beta |\delta_\theta y|+\gamma |\delta_\theta z|^\alpha\right).
\end{array}
$$
Thus, the inequality \eqref{eq:5.1} holds with $k=0$. The concave case is similar.\vspace{0.2cm}

(ii) Assume that $\as$, for each $(y,z)\in \R\times\R^d$, $g(\omega,t,\cdot,z)$ is Lipschitz with Lipschitz constant $\beta$, and $g(\omega,t, y,\cdot)$ is convex. Then, noticing by \ref{H1"} that $|g(\omega,t,0,z)|\leq f_t+\gamma |z|^2$, we have
$$
\begin{array}{ll}
&\Dis g(\omega,t,y_1,z_1)-\theta g(\omega,t,y_2,z_2)\vspace{0.1cm}\\
\leq & \Dis |g(\omega,t,y_1,z_1)-g(\omega,t,y_2,z_1)|+g(\omega,t,y_2,z_1)-\theta g(\omega,t,y_2,z_2)\vspace{0.1cm}\\
\leq & \Dis \beta |y_1-y_2|+g(\omega,t,y_2,\theta z_2+(1-\theta)\delta_\theta z)-\theta g(\omega,t,y_2,z_2)
\vspace{0.1cm}\\
\leq & \Dis \beta (|y_1-\theta y_2|+(1-\theta)|y_2|)+(1-\theta)\left(|g(\omega,t,y_2,\delta_\theta z)-g(\omega,t,0,\delta_\theta z)|+|g(\omega,t,0,\delta_\theta z)|\right)\vspace{0.1cm}\\
\leq & \Dis (1-\theta)\left(\beta |\delta_\theta y|+2\beta |y_2|+f_t(\omega)+\gamma |\delta_\theta z|^\alpha\right).
\end{array}
$$
Thus, \eqref{eq:5.1} holds with $2\beta$ instead of $k$. The concave case is similar.\vspace{0.2cm}

(iii) With loss of generality, we assume that the functions $l(y)$ and $q(z)$ have Lipschitz constants $\beta$ and $\gamma$ together with a same bound $M>0$, and $q(z)\equiv 0$ when $|z|>R$ for some $R>0$. Noticing that
$$
\begin{array}{lll}
|q(\theta z_2)-q(z_2)|&=& \Dis |q(\theta z_2)-q(z_2)|{\bf 1}_{\theta\in (0,1/2]}+|q(\theta z_2)-q(z_2)|{\bf 1}_{\theta\in (1/2,1)}\vspace{0.1cm}\\
&\leq & \Dis (1-\theta)\frac{2M}{1-\theta}{\bf 1}_{\theta\in (0,1/2]}+(1-\theta)\gamma |z_2|{\bf 1}_{|z_2|\leq 2R}|{\bf 1}_{\theta\in (1/2,1)}\vspace{0.1cm}\\
&\leq & \Dis (1-\theta)(4M+2\gamma R),\vspace{-0.1cm}
\end{array}
$$
we have\vspace{-0.1cm}
$$
\begin{array}{ll}
& \Dis g(\omega,t,y_1,z_1)-\theta g(\omega,t,y_2,z_2)=l(y_1)q(z_1)-\theta l(y_2)q(z_2)\vspace{0.1cm}\\
\leq & \Dis |l(y_1)-l(y_2)||q(z_1)|+|l(y_2)||q(z_1)-\theta q(z_2)|\vspace{0.1cm}\\
\leq & \Dis M\beta |y_1-y_2|+M\left(|q(z_1)-q(\theta z_2)|+|q(\theta z_2)-q(z_2)|+(1-\theta)|q(z_2)|\right)\vspace{0.1cm}\\
\leq & \Dis M\beta (|y_1-\theta y_2|+(1-\theta)|y_2|)+M\left(\gamma |z_1-\theta z_2|+(1-\theta)(4M+2\gamma R)+(1-\theta)M\right)\vspace{0.1cm}\\
\leq & \Dis (1-\theta)M\left(\beta |\delta_\theta y|+\beta |y_2|+\gamma |\delta_\theta z|+5M+2\gamma R\right)\vspace{0.1cm}\\
\leq & \Dis (1-\theta)M\left(5M+2\gamma R+\gamma+\beta |y_2|+\beta |\delta_\theta y|+\gamma |\delta_\theta z|^\alpha\right).\vspace{0.1cm}
\end{array}
$$
Thus, the inequality \eqref{eq:5.1} holds with $M(5M+2\gamma R+\gamma)$ instead of $f_\cdot$, $M\beta$ instead of $k$ and $\beta$, and $M\gamma$ instead of $\gamma$ respectively. The proposition is then proved.
\end{proof}

\begin{rmk}\label{rmk:5.3}
(i) Letting $y_1=y_2=y$ and $z_1=z_2=z$ in \eqref{eq:5.1} and \eqref{eq:5.2} respectively yields that
$$
{\bf 1}_{\{y>0\}} g(\omega,t,y,z)\leq f_t(\omega)+(\beta+k)|y|+\gamma |z|^\alpha
$$
and
$$
-{\bf 1}_{\{y<0\}} g(\omega,t,y,z)\leq f_t(\omega)+(\beta+k)|y|+\gamma |z|^\alpha,
$$
whose combination implies that $g$ has a one-sided linear growth in the state variable $y$ and a sub-quadratic growth in the state variable $z$.\vspace{0.2cm}

(ii) Letting first $z_1=z_2=z$ in \eqref{eq:5.1} and \eqref{eq:5.2} and then letting $\theta\To 1$ yields that
$$
{\bf 1}_{\{y_1-y_2>0\}} \left(g(\omega,t,y_1,z)-g(\omega,t,y_2,z)\right)\leq \beta |y_1-y_2|,
$$
which means that $g$ satisfies the so-called monotonicity condition in $y$.\vspace{0.2cm}

(iii) Set $y=\frac{y_1-\theta y_2}{1-\theta}$, $z=\frac{z_1-\theta z_2}{1-\theta}$, $\bar y=y_2$ and $\bar z=z_2$. Then, \eqref{eq:5.1} and \eqref{eq:5.2} can be respectively rewritten as the following forms:
\begin{equation}\label{eq:5.3}
  \begin{array}{ll}
  &\Dis {\bf 1}_{\{y>0\}}\left(g(\omega,t,(1-\theta)y+\theta \bar y,(1-\theta)z+\theta \bar z)-\theta g(\omega,t,\bar y,\bar z)\right)\vspace{0.1cm}\\
   \leq &\Dis (1-\theta)\left(f_t(\omega)+k|\bar y|+\beta |y|+\gamma |z|^\alpha\right)
   \end{array}
  \end{equation}
and
\begin{equation}\label{eq:5.4}
  \begin{array}{ll}
  &\Dis -{\bf 1}_{\{y<0\}}\left(g(\omega,t,(1-\theta)y+\theta \bar y,(1-\theta)z+\theta \bar z)-\theta g(\omega,t,\bar y,\bar z)\right)\vspace{0.1cm}\\
   \leq &\Dis (1-\theta)\left(f_t(\omega)+k|\bar y|+\beta |y|+\gamma |z|^\alpha\right).\vspace{0.4cm}
  \end{array}
  \end{equation}
\end{rmk}

The following \cref{thm:5.4} and \cref{thm:5.5} are the main results of this section.

\begin{thm}\label{thm:5.4}
Let $\xi$ and $\xi'$ be two terminal conditions such that $\ps$, $\xi\leq \xi'$, $g$ and $g'$ be two generators which are continuous in $(y,z)$, $g$ (resp. $g'$) verifies assumption \ref{H2'} with constants $(\alpha,\beta,\gamma,k)$ and process $f_\cdot$, and $(Y_t, Z_t)_{t\in\T}$ and $(Y'_t, Z'_t)_{t\in\T}$ be respectively a solution to BSDE$(\xi, g)$ and BSDE$(\xi', g')$ such that
\begin{equation}\label{eq:5.5}
\RE p>0,\ \ \ \E\left[\psi\left(\sup\limits_{t\in\T}(|Y_t|+|Y'_t|)+\int_0^T
f_t{\rm d}t,\ p\right)\right]<+\infty.
\end{equation}

If $\as$, we have
\[
g(t,Y'_t,Z'_t)\leq g'(t,Y'_t,Z'_t)\ \ \ ({\rm resp.}\  \ g(t,Y_t,Z_t)\leq g'(t,Y_t,Z_t)\ ),
\]
then $\ps$, for each $t\in\T$, $Y_t\leq Y'_t$.
\end{thm}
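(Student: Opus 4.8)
The plan is to transpose the $\theta$-technique from the proof of \cref{thm:4.2}, with the convexity assumption \ref{H2} replaced by \ref{H2'} in its rewritten form \eqref{eq:5.3}--\eqref{eq:5.4}. There are four cases, according as $g$ or $g'$ verifies \ref{H2'}, and according as \eqref{eq:5.1} or \eqref{eq:5.2} holds; I would treat in detail only the case ``$g$ verifies \ref{H2'} via \eqref{eq:5.1}, and $\as$, $g(t,Y'_t,Z'_t)\leq g'(t,Y'_t,Z'_t)$'', the remaining cases being obtained by the same bookkeeping changes as in \cref{thm:4.2}: using $\theta Y_\cdot-Y'_\cdot,\ \theta Z_\cdot-Z'_\cdot$ in place of $Y_\cdot-\theta Y'_\cdot,\ Z_\cdot-\theta Z'_\cdot$, invoking \eqref{eq:5.4} instead of \eqref{eq:5.3}, bounding the terminal value by $\xi^-$ instead of $\xi^+$, and exchanging the roles of $g$ and $g'$.

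For $\theta\in(0,1)$ put $\delta_\theta U_\cdot:=(Y_\cdot-\theta Y'_\cdot)/(1-\theta)$ and $\delta_\theta V_\cdot:=(Z_\cdot-\theta Z'_\cdot)/(1-\theta)$, so that $(\delta_\theta U_\cdot,\delta_\theta V_\cdot)$ solves BSDE \eqref{eq:4.9} with generator $\delta_\theta g$ given by \eqref{eq:4.10}. The crucial point is that $\delta_\theta g$ satisfies assumption \ref{H1'} (as used in \cref{rmk:3.5}) with the $\R_+$-valued process $f_t+k|Y'_t|$ playing the role of $|g(\cdot,0,0)|$. Indeed, on $\{y>0\}$, applying \eqref{eq:5.3} of \cref{rmk:5.3} with $\bar y=Y'_s$, $\bar z=Z'_s$ bounds the first bracket of \eqref{eq:4.10} by $(1-\theta)\left(f_s+k|Y'_s|+\beta|y|+\gamma|z|^\alpha\right)$, while the second bracket $\theta\left(g(s,Y'_s,Z'_s)-g'(s,Y'_s,Z'_s)\right)$ is $\leq 0$ by hypothesis; hence
\[
{\bf 1}_{\{y>0\}}\,\delta_\theta g(s,y,z)\leq f_s+k|Y'_s|+\beta|y|+\gamma|z|^\alpha,\qquad\ass,\ \RE\,(y,z)\in\R\times\R^d.
\]

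Next I would re-run the proof of \cref{Pro:3.3} for BSDE \eqref{eq:4.9} in its unbounded-$Y$ form (cf. \cref{rmk:3.4}), taking $\eps=1$: this yields, $\ps$, for each $t\in\T$,
\[
\psi\left(\delta_\theta U_t^+,\ 1\right)\leq C\,\E\left[\left.\psi\left(\delta_\theta U_T^+ +\int_0^T\bigl(f_s+k|Y'_s|\bigr){\rm d}s,\ \mu_1\right)\right|\F_t\right],
\]
with $(C,\mu_1)=(\tilde C_{\alpha,\gamma,1,T},\tilde\mu_{\alpha,\gamma,1}(T))$ if $\beta=0$ and $(C,\mu_1)=(\bar C_{\alpha,\beta,\gamma,1,T},\bar\mu_{\alpha,\beta,\gamma,1}(T))$ if $\beta>0$. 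The conditional expectation is $\ps$ finite: by \eqref{eq:4.14}, $\delta_\theta U_T^+\leq\xi^+$, and $\int_0^T(f_s+k|Y'_s|){\rm d}s\leq(1+kT)\bigl(\sup_{t\in\T}(|Y_t|+|Y'_t|)+\int_0^T f_s\,{\rm d}s\bigr)$, so by $(a+b)^{2/\alpha^*}\leq a^{2/\alpha^*}+b^{2/\alpha^*}$ (valid since $2/\alpha^*\in(0,1)$) and \eqref{eq:5.5} with a suitably enlarged order, the random variable inside $\psi(\cdot,\mu_1)$ has a finite exponential-type moment. Rewriting the estimate as a bound on $(Y_t-\theta Y'_t)^+=(1-\theta)\,\delta_\theta U_t^+$, namely
\[
\bigl(Y_t-\theta Y'_t\bigr)^+\leq(1-\theta)\left(\ln\left\{C\,\E\left[\left.\psi\left(\xi^+ +\int_0^T\bigl(f_s+k|Y'_s|\bigr){\rm d}s,\ \mu_1\right)\right|\F_t\right]\right\}\right)^{\alpha^*/2},
\]
and letting $\theta\uparrow 1$ (the conditional expectation being a fixed $\ps$ finite random variable), the right-hand side tends to $0$, whence $\bigl(Y_t-Y'_t\bigr)^+=0$; the continuity of $Y_\cdot$ and $Y'_\cdot$ then upgrades this to $\ps$, $Y_t\leq Y'_t$ for every $t\in\T$.

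The main obstacle I anticipate is precisely the verification of \ref{H1'} for $\delta_\theta g$ uniformly in $\theta\in(0,1)$: one must account correctly for all the terms of \eqref{eq:4.10} (resp. \eqref{eq:4.15}) through the rewritten inequality \eqref{eq:5.3} (resp. \eqref{eq:5.4}) and exploit the sign of $g(s,Y'_s,Z'_s)-g'(s,Y'_s,Z'_s)$ on $\{y>0\}$ (resp. of $g(s,Y_s,Z_s)-g'(s,Y_s,Z_s)$ on $\{y<0\}$), and one must check that the replacement process $f_t+k|Y'_t|$ (resp. $f_t+k|Y_t|$) still possesses sub-exponential moments of every order --- which follows from \eqref{eq:5.5} --- so that the a priori estimate of \cref{Pro:3.3}, in the unbounded form of \cref{rmk:3.4}, is available. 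Once these two points are settled, the argument is a routine transcription of the proof of \cref{thm:4.2}.
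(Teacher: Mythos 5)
Your proposal is correct and follows essentially the same route as the paper: the $\theta$-technique with $\delta_\theta U,\delta_\theta V$ solving \eqref{eq:4.9}, the verification via \eqref{eq:5.3} and the sign of $g(s,Y'_s,Z'_s)-g'(s,Y'_s,Z'_s)$ that $\delta_\theta g$ satisfies \ref{H1'} with $f_\cdot+k|Y'_\cdot|$ in place of $|g(\cdot,0,0)|$, and then the a priori estimate of \cref{Pro:3.3}/\cref{rmk:3.4} combined with \eqref{eq:4.14}, \eqref{eq:5.5} and the limit $\theta\uparrow 1$, exactly as in \cref{thm:4.2}. The paper compresses this last part into ``the rest of the proof runs as that in \cref{thm:4.2}''; your write-up merely spells out those details, so there is no substantive difference.
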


\begin{proof}
We only prove the case that the generator $g$ satisfies \eqref{eq:5.1} with constants $(\alpha,\beta,\gamma,k)$ and process $f_\cdot$, and $\as$, $g(t,Y'_t,Z'_t)\leq g'(t,Y'_t,Z'_t)$. For each fixed $\theta\in (0,1)$, with the notations in \eqref{eq:4.8}, we know that the pair $(\delta_\theta  U_\cdot,\delta_\theta  V_\cdot)$ verifies BSDE \eqref{eq:4.9} with generator $g$ defined in \eqref{eq:4.10}. Then, in view of \eqref{eq:5.3}, it follows from the assumptions that $\ass$, for each $(y,z)\in \R\times \R^d$, we have
\begin{equation}\label{eq:5.6}
\delta_\theta g(s,y,z){\bf 1}_{y>0}\leq |f_s|+k|Y'_s|+\beta |y|+\gamma |z|^\alpha,
\end{equation}
which means that the generator $\delta_\theta g$ satisfies assumption \ref{H1'} with the process $|f_\cdot|+k|Y'_\cdot|$ instead of $|g(\cdot,0,0)|$. Thus, thanks to \eqref{eq:5.5}, the rest of proof runs as that in \cref{thm:4.2}.\vspace{0.1cm}
\end{proof}

\begin{thm}\label{thm:5.5}
Assume that $\xi$ is a terminal condition and $g$ is a generator which is continuous in $(y,z)$ and satisfies assumptions \ref{H1"} and \ref{H2'} with constants $(\alpha,\beta,\gamma,k)$ and process $f_\cdot$.\vspace{0.1cm}

If the terminal condition $\xi+\int_0^T f_t {\rm d}t$ has sub-exponential moments of any order, i.e., \vspace{0.1cm}
\[
\RE\ p>0,\ \ \E\left[\psi\left(|\xi|+\int_0^T f_t {\rm d}t,\ p\right)\right]=\E\left[\exp\left\{p \left(|\xi|+\int_0^T f_t {\rm d}t\right)^{\frac{2}{\alpha^*}} \right\}\right]<+\infty,\vspace{0.1cm}
\]
then BSDE$(\xi,g)$ admits a unique solution $(Y_t,Z_t)_{t\in\T}$ such that $\sup_{t\in\T}|Y_t|$ has sub-exponential moments of any order (i.e., \eqref{eq:4.2} holds). Furthermore, $Z_\cdot\in \mcal^p$ for all $p>0$.
\end{thm}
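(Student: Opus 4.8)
The plan is to obtain the statement by assembling three ingredients that are already in place: the existence construction of \cref{thm:3.1} in the form given by \cref{rmk:3.5} under the one-sided assumption \ref{H1"}, the moment-upgrading argument contained in the proof of \cref{thm:4.1}, and the comparison result \cref{thm:5.4}. First I would note that \ref{H1"} entails \ref{H1'} with the process $f_\cdot$ playing the role of $|g(\cdot,0,0)|$, so by \cref{rmk:3.5} the conclusion of \cref{thm:3.1} holds with $f_t$ replacing $|g(t,0,0)|$; since $\xi+\int_0^T f_t\,{\rm d}t$ has sub-exponential moments of every order, in particular of some order above $\tilde\mu^0_{\alpha,\gamma,T}$ (resp.\ $\bar\mu^0_{\alpha,\beta,\gamma,T}$), BSDE$(\xi,g)$ admits a solution $(Y_\cdot,Z_\cdot)$ with $\psi(|Y_t|,\tilde\mu_{\alpha,\gamma,\eps}(t))$ (resp.\ $\psi(|Y_t|,\bar\mu_{\alpha,\beta,\gamma,\eps}(t))$) of class (D), $Z_\cdot\in\mcal^2$, and the a priori bound \eqref{eq:3.4} (resp.\ \eqref{eq:3.5}) valid with $f_t$ in place of $|g(t,0,0)|$. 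Next, exactly as in \cref{thm:4.1}, for each $p>0$ I would apply \cref{Pro:3.3} (again via \cref{rmk:3.5}) with $\eps=p$ to dominate $\psi(|Y_t|,p)$ by a constant multiple of $\E[\psi(|\xi|+\int_0^T f_s\,{\rm d}s,\tilde\mu_{\alpha,\gamma,p}(T))\mid\F_t]$ (resp.\ its $\bar\mu$-version), take the supremum over $t\in\T$, and use Doob's maximal inequality together with the moment hypothesis to deduce \eqref{eq:4.2}; and $Z_\cdot\in\mcal^p$ for all $p>0$ then follows by copying verbatim the last part of the proof of \cref{thm:4.1}, namely controlling $\int_0^{\sigma_m}|Z_s|^2{\rm d}s$ via \eqref{eq:3.8} (valid under \ref{H1"}) by $\tilde\varphi(\sigma_m,\bar Y_{\sigma_m};1)$ plus a stochastic integral along a localizing sequence, and closing with a BDG plus Young estimate and Fatou's lemma.

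For uniqueness, let $(Y^1_\cdot,Z^1_\cdot)$ and $(Y^2_\cdot,Z^2_\cdot)$ be two solutions of BSDE$(\xi,g)$ with $\sup_{t\in\T}|Y^i_t|$ enjoying sub-exponential moments of every order. Since $2/\alpha^*\in(0,1)$, the subadditivity $(a+b+c)^{2/\alpha^*}\le a^{2/\alpha^*}+b^{2/\alpha^*}+c^{2/\alpha^*}$ together with H\"older's inequality shows that $\sup_{t\in\T}(|Y^1_t|+|Y^2_t|)+\int_0^T f_t\,{\rm d}t$ also has sub-exponential moments of every order (the moment of $\int_0^T f_t\,{\rm d}t$ being inherited from the hypothesis on $|\xi|+\int_0^T f_t\,{\rm d}t$), so \eqref{eq:5.5} holds. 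I would then apply \cref{thm:5.4} with $\xi'=\xi$ and $g'=g$ — for which the comparison inequality degenerates to an equality — to get $Y^1_t\le Y^2_t$ for all $t\in\T$, $\ps$; interchanging the two solutions gives the reverse inequality, whence $Y^1_\cdot\equiv Y^2_\cdot$. Subtracting the two copies of \eqref{eq:1.1} then makes $t\mapsto\int_0^t(Z^1_s-Z^2_s)\cdot{\rm d}B_s$ a continuous local martingale of finite variation, hence identically zero, so $Z^1_\cdot=Z^2_\cdot$, $\as$

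Given \cref{thm:5.4}, uniqueness is a short reduction, so the only genuinely delicate point is checking that the a priori machinery of \cref{Pro:3.3} and the approximation scheme underlying \cref{thm:3.1} really do survive the weakening of \ref{H1} to \ref{H1"}: the one-sided sub-quadratic bound is exactly what It\^{o}--Tanaka applied to $Y^+_\cdot$ needs (cf.\ \cref{rmk:3.4}), while the auxiliary bound $|g|\le f_t+h(|y|)+c|z|^2$ is what keeps the monotone truncations $g^{n,p}$ bounded and the Briand--Hu localization applicable; I would spell out that the truncated BSDEs still admit monotone bounded solutions and that the limiting step is unaffected. Extracting \eqref{eq:5.5} from \eqref{eq:4.2} is then routine once the subadditivity of $x\mapsto x^{2/\alpha^*}$ is invoked, and no genuinely new estimate beyond those already established is required.
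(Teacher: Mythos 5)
Your proposal is correct and takes essentially the same route as the paper: the paper's own proof of \cref{thm:5.5} is precisely the one-line assembly you describe, namely invoking \cref{rmk:3.5} (so that the existence construction of \cref{thm:3.1} and the a priori estimates of \cref{Pro:3.3} hold under \ref{H1"} with $f_\cdot$ in place of $|g(\cdot,0,0)|$), using \cref{thm:5.4} with $\xi'=\xi$, $g'=g$ for uniqueness, and otherwise repeating the moment-upgrading and $\mcal^p$ arguments from the proof of \cref{thm:4.1}. The extra details you supply (subadditivity of $x\mapsto x^{2/\alpha^*}$ plus H\"older to verify \eqref{eq:5.5}, and the identification $Z^1=Z^2$ via the finite-variation local martingale) are exactly the routine steps the paper leaves implicit.
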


\begin{proof}
In view of \cref{rmk:3.5} and \cref{thm:5.4}, the proof runs as that in \cref{thm:4.1}.
\end{proof}

\begin{rmk}\label{rmk:5.6}
From \cref{rmk:3.5}, \cref{pro:5.1} and \cref{rmk:5.2}, it is not difficult to see that Theorems \ref{thm:5.4} and \ref{thm:5.5} respectively extend Theorems \ref{thm:4.2} and \ref{thm:4.1} to the non-convexity and non-concavity case.
\end{rmk}

\section{A stabilty theorem of the solutions}
\label{sec:6}
\setcounter{equation}{0}

In this section, we establish the following stability result for the unbounded solutions of BSDEs under general assumptions \ref{H1"} and \ref{H2'}.

\begin{thm}\label{thm:6.1}
Assume that $\xi$ is a terminal condition, $g$ is a generator which is continuous in $(y,z)$ and satisfies assumptions \ref{H1"} and \ref{H2'} with constants $(\alpha,\beta,\gamma,k)$ and process $f_\cdot$, and $(Y_t,Z_t)_{t\in\T}$ is the (unique) solution to BSDE$(\xi,g)$ such that $\sup_{t\in\T}|Y_t|$ has sub-exponential moments of any order.\vspace{0.1cm}

Assume also that for each $n\geq 1$, $\xi^n$ is a terminal condition, $g^n$ is a generator which is continuous in $(y,z)$ and satisfies assumptions \ref{H1"} and \ref{H2'} with constants $(\alpha,\beta,\gamma,k)$ and process $f^n_\cdot$, and $(Y^n_t,Z^n_t)_{t\in\T}$ is the (unique) solution to BSDE$(\xi^n,g^n)$ such that $\sup_{t\in\T}|Y^n_t|$ has sub-exponential moments of any order.\vspace{0.1cm}

Let us assume further that for each $p>0$,
\begin{equation}\label{eq:6.1}
\E\left[\exp\left\{p \left(|\xi|+\int_0^T f_t {\rm d}t\right)^{\frac{2}{\alpha^*}} \right\}\right]+\sup_{n\geq 1}\E\left[\exp\left\{p \left(|\xi^n|+\int_0^T f^n_t {\rm d}t\right)^{\frac{2}{\alpha^*}} \right\}\right]<+\infty.
\end{equation}
If $\ps$, $\xi^n\To \xi$ as $n\To \infty$ and there exists a real $q>1$ such that
\begin{equation}\label{eq:6.2}
\Lim \E\left[\left(\int_0^T \left| g^n(s,Y_s, Z_s)-g(s,Y_s, Z_s)\right| {\rm d}s\right)^q\right]=0,
\end{equation}
then for each $p>0$, we have
\begin{equation}\label{eq:6.3}
\Lim \E\left[\exp\left\{p \left(\sup_{t\in\T}|Y^n_t-Y_t|\right)^{\frac{2}{\alpha^*}}\right\}\right]=1.
\end{equation}
And, if the function $h(\cdot)$ defined in \ref{H1"} further satisfies that for some constant $c>0$,
\begin{equation}\label{eq:6.4}
h(|x|)\leq c\exp (c|x|^{2\over \alpha^*}),\ \ x\in \R,
\end{equation}
then for each $p>0$, we have
\begin{equation}\label{eq:6.5}
\Lim \E\left[\left(\int_0^T|Z^n_s-Z_s|^2{\rm d}s\right)^{p\over 2}
\right]=0.\vspace{0.2cm}
\end{equation}
\end{thm}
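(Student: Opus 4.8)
The plan is to run the $\theta$-technique of \citet{BriandHu2008PTRF} on the differences $Y^n_\cdot - Y_\cdot$, exactly as in the proof of \cref{thm:5.4}, but now keeping track of the extra ``error'' term coming from $g^n(s,Y_s,Z_s)-g(s,Y_s,Z_s)$, and then feed the resulting a priori estimate into Doob's inequality to upgrade $L^p$-type control into the sub-exponential convergence \eqref{eq:6.3}. First I would fix $\theta\in(0,1)$ and set $\delta_\theta U^n_\cdot:=(Y^n_\cdot-\theta Y_\cdot)/(1-\theta)$, $\delta_\theta V^n_\cdot:=(Z^n_\cdot-\theta Z_\cdot)/(1-\theta)$; this pair solves a BSDE whose generator, using \ref{H2'} for $g^n$ (in the form \eqref{eq:5.3}) together with the bound $|g^n(s,Y_s,Z_s)-g(s,Y_s,Z_s)|$ absorbed into an $\R_+$-valued process, satisfies assumption \ref{H1'} with $|g(\cdot,0,0)|$ replaced by $f^n_\cdot + k|Y_\cdot| + |g^n(s,Y_s,Z_s)-g(s,Y_s,Z_s)|$. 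By \cref{rmk:3.4} together with \cref{Pro:3.3}, this gives an $\exp(\mu L^{2/\alpha^*})$-type estimate for $(\delta_\theta U^n_t)^+$; since $(\delta_\theta U^n_T)^+ = (\xi^n-\theta\xi)^+/(1-\theta) \le \xi^n{}^+ + \theta|\xi^n-\xi|/(1-\theta)$ and, letting $\theta\uparrow 1$ after using $\ln((\cdot))^{\alpha^*/2}$, one recovers a bound on $(Y^n_t - Y_t)^+$; the symmetric argument (interchanging the roles of $Y^n$ and $Y$, or using the other half of \ref{H2'}) bounds $(Y_t-Y^n_t)^+$, hence $\sup_{t}|Y^n_t-Y_t|$.

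The key point is that the bound obtained this way is, up to constants depending only on $(\alpha,\beta,\gamma,T)$ and the uniform moment bounds \eqref{eq:6.1},
$$
\psi\!\left(\sup_{t\in\T}|Y^n_t-Y_t|,\,p\right) \le C\,\sup_{t\in\T}\E\!\left[\left.\psi\!\left(|\xi^n-\xi|+\int_0^T|g^n(s,Y_s,Z_s)-g(s,Y_s,Z_s)|{\rm d}s,\,p'\right)\right|\F_t\right]
$$
for suitable $p'$ depending on $(p,\alpha,\gamma,\beta,T)$; then Doob's maximal inequality applied to the martingale on the right, followed by taking expectations, reduces everything to showing
$$
\Lim \E\!\left[\exp\!\left\{p'\Bigl(|\xi^n-\xi|+\int_0^T|g^n(s,Y_s,Z_s)-g(s,Y_s,Z_s)|{\rm d}s\Bigr)^{2/\alpha^*}\right\}\right]=1.
$$
Because $2/\alpha^* \in(0,1)$ one has $(a+b)^{2/\alpha^*}\le a^{2/\alpha^*}+b^{2/\alpha^*}$, so the exponential factorises; each factor converges to $1$ in probability by \eqref{eq:6.2} and $\xi^n\to\xi$, and uniform integrability of these exponentials is furnished by \eqref{eq:6.1} (for the $\xi$-part, via $|\xi^n-\xi|^{2/\alpha^*}\le |\xi^n|^{2/\alpha^*}+|\xi|^{2/\alpha^*}$) together with \eqref{eq:6.2} with the exponent $q>1$ giving an extra power for the $g^n$-part. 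This yields \eqref{eq:6.3}.

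For \eqref{eq:6.5} I would reproduce the $\mcal^p$-argument from the proof of \cref{thm:4.1}: apply It\^o's formula to $\tilde\varphi(s,\cdot\,;1)$ (or rather to a bracket-type functional of $Y^n_\cdot-Y_\cdot$), use assumption \ref{H1"} to control the drift by $f_s+f^n_s+h(|Y_s|)+h(|Y^n_s|)+c(|Z_s|^2+|Z^n_s|^2)$ and the already-established $\mcal^2$-bounds, then estimate the stochastic integral by the BDG inequality with the bound $\tilde\varphi_x\le K\tilde\varphi$, absorbing $\tfrac12\E[(\int|Z^n-Z|^2)^{p/2}]$ into the left side. The growth hypothesis \eqref{eq:6.4} is exactly what guarantees that $h(|Y_s|)$ and $h(|Y^n_s|)$ have moments of all orders (via \eqref{eq:6.3} and the uniform bound on $\sup_t|Y^n_t|$), so the right-hand side stays finite; convergence to $0$ then follows from \eqref{eq:6.3}, \eqref{eq:6.2} and dominated convergence, after a stopping-time localisation $\sigma_m$ and Fatou's lemma as in \cref{thm:4.1}. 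The main obstacle I expect is the bookkeeping in the first step: the generator of the $\theta$-difference BSDE involves $Y_\cdot$ (through \ref{H2'} for $g^n$) and the error process simultaneously, so one must be careful that the process playing the role of ``$|g(\cdot,0,0)|$'' still has sub-exponential moments of all orders uniformly in $n$ — this is where \eqref{eq:6.1}, the uniform bound on $\sup_t|Y_t|$, and the extra integrability from $q>1$ in \eqref{eq:6.2} must be combined, and where the precise dependence of the constants $p'$ on $p$ through $\tilde\mu_{\alpha,\gamma,p}(T)$ and $\bar\mu_{\alpha,\beta,\gamma,p}(T)$ has to be tracked.
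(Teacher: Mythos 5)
There is a genuine gap at the step you yourself flag as ``the key point''. The $\theta$-technique under \ref{H2'} does \emph{not} yield an estimate of the form $\psi\left(\sup_t|Y^n_t-Y_t|,p\right)\le C\sup_t\E\left[\left.\psi\left(|\xi^n-\xi|+\int_0^T|g^n(s,Y_s,Z_s)-g(s,Y_s,Z_s)|\,{\rm d}s,\ p'\right)\right|\F_t\right]$: that is a Lipschitz-type stability bound, and the whole reason for the $\theta$-trick is that one cannot estimate the difference $Y^n-Y$ directly. What the a priori estimate actually gives (this is the paper's inequality for $|Y^n_t-Y_t|$) is a bound of the form $(1-\theta)\bigl(|Y_t|+|Y^n_t|\bigr)$ plus $(1-\theta)$ times conditional-expectation terms involving $\xi$, $\xi^n$ and $f^n_\cdot+k|Y_\cdot|$ \emph{themselves} (not their differences), plus a term carrying the error $\int_0^T|g^n(s,Y_s,Z_s)-g(s,Y_s,Z_s)|\,{\rm d}s$ multiplied by the factor $(1-\theta)^{-(\alpha^*-2)/2}$, because $D^n_\theta=\frac{\theta}{1-\theta}(g^n-g)(s,Y_s,Z_s)$ contains $1/(1-\theta)$. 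Consequently you cannot ``let $\theta\uparrow1$'' at fixed $n$ as you propose: the error term blows up. The correct order is to fix $\theta$, bound $\mathbb{P}(\sup_t|Y^n_t-Y_t|>\eps)$ via Markov/Doob and H\"older (this is where the exponent $q>1$ in \eqref{eq:6.2} is used), send $n\to\infty$ first so that \eqref{eq:6.2} kills the error term, and only then send $\theta\to1$ to remove the residual $(1-\theta)$-terms. This gives convergence in probability of $\sup_t|Y^n_t-Y_t|$, and \eqref{eq:6.3} is then obtained not by your factorisation argument but by uniform integrability: the uniform bound $\sup_n\E\bigl[\exp\{p(\sup_t|Y^n_t|)^{2/\alpha^*}\}+(\int_0^T|Z^n_s|^2{\rm d}s)^{p/2}\bigr]<+\infty$, which follows from \eqref{eq:6.1} and the proofs of \cref{thm:5.5} and \cref{thm:4.1}, reduces the whole theorem to convergence in probability. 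Your proposal never makes this reduction, and the inequality it substitutes for it is not available under \ref{H2'}.

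Your route to \eqref{eq:6.5} is also off target. Reproducing the $\mcal^p$-argument of \cref{thm:4.1} on the difference would require the pair $(Y^n-Y,Z^n-Z)$ to solve a BSDE whose driver is sub-quadratic in the \emph{difference} variable, which is false: the crude bound $f_s+f^n_s+h(|Y_s|)+h(|Y^n_s|)+c(|Z_s|^2+|Z^n_s|^2)$ you invoke is not small and cannot be absorbed, so BDG plus the bound $\tilde\varphi_x\le K\tilde\varphi$ gives finiteness at best, not convergence to $0$. The paper instead applies It\^o's formula to $(Y^n-Y)^2$ to get $\E[\int_0^T|Z^n_s-Z_s|^2{\rm d}s]\le\E[|\xi^n-\xi|^2+2\sup_t|Y^n_t-Y_t|\int_0^T|g^n(s,Y^n_s,Z^n_s)-g(s,Y_s,Z_s)|{\rm d}s]$, uses \ref{H1"} with \eqref{eq:6.4} and the uniform moment bounds to control the integral factor in $L^2$ uniformly in $n$, and then concludes by Cauchy--Schwarz together with the already-proved \eqref{eq:6.3}; the general $p>0$ case again follows from the uniform moments. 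You correctly identify the role of \eqref{eq:6.4} (moments of $h(|Y^n_\cdot|)$), but the mechanism that produces smallness is the factor $\sup_t|Y^n_t-Y_t|\to0$, not a localisation-plus-dominated-convergence argument on the $\tilde\varphi$-functional.
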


\begin{proof}
It follows from the integrability assumption \eqref{eq:6.1} and the proof of \cref{thm:5.5} and \cref{thm:4.1} (see, in particular, inequality \eqref{eq:4.5}) that the sequence $(Y^n_t,Z^n_t)_{t\in\T}$ satisfies
\begin{equation}\label{eq:6.6}
\RE\ p>0, \ \ \sup_{n\geq 1} \E\left[\exp\left\{p \left(\sup_{t\in\T}|Y^n_t|\right)^{\frac{2}{\alpha^*}} \right\}
+\left(\int_0^T|Z^n_s|^2{\rm d}s\right)^{p\over 2}
\right]<+\infty.\vspace{0.1cm}
\end{equation}
It is thus enough to prove that
$$
\sup_{t\in\T}|Y^n_t-Y_t|+\int_0^T|Z^n_s-Z_s|^2{\rm d}s
$$
converges to $0$ in probability to get the desired conclusion.\vspace{0.2cm}

We only prove the case that $\beta=0$ and inequality \eqref{eq:5.1} holds for $g$ and $g^n$. The other cases can be proved in the same way. For each fixed $\theta\in (0,1)$, define
$$
\delta_\theta^n  U_\cdot:=\frac{Y^n_\cdot-\theta Y_\cdot}{1-\theta}\ \  {\rm and} \ \ \delta_\theta^n  V_\cdot:=\frac{Z^n_\cdot-\theta Z_\cdot}{1-\theta}.
$$
Then the pair $(\delta_\theta^n  U_\cdot,\delta_\theta^n  V_\cdot)$ verifies the following BSDE:
\begin{equation}\label{eq:6.7}
  \delta_\theta^n  U_t=\delta_\theta^n  U_T +\int_t^T \delta_\theta^n g (s,\delta_\theta^n  U_s,\delta_\theta^n  V_s) {\rm d}s-\int_t^T \delta_\theta^n  V_s \cdot {\rm d}B_s, \ \ \ \ t\in\T,
\end{equation}
where $\ass$, for each $(y,z)\in \R\times\R^d$,
\begin{equation}\label{eq:6.8}
\delta_\theta^n g(s,y,z):=\Dis \frac{1}{1-\theta}\left(g^n(s,(1-\theta)y+\theta Y_s,(1-\theta)z+\theta Z_s)-\theta g^n(s, Y_s, Z_s)\right)+D^n_\theta(s)
\end{equation}
with
$$
D^n_\theta(s):=\frac{\theta}{1-\theta}\left( g^n(s,Y_s, Z_s)-g(s,Y_s, Z_s) \right).
\vspace{0.1cm}
$$
Since \eqref{eq:5.1} holds for $g^n$, it follows from \eqref{eq:6.8} and \eqref{eq:5.3} that $\ass$, for each $(y,z)\in \R\times \R^d$,
\begin{equation}\label{eq:6.9}
\delta_\theta^n g(s,y,z){\bf 1}_{y>0}\leq f^n_s+k|Y_s|+|D^n_\theta(s)|+\gamma |z|^\alpha.
\end{equation}
Now, let the functions $\tilde\varphi(s,x;\eps)$ and $\psi(x,\mu)$ be defined respectively in \eqref{eq:2.13} and \eqref{eq:3.1}, and denote
$$
\Delta_\theta^n  U_t:=(\delta_\theta^n  U_t)^+ +\int_0^t \left(f^n_s+k|Y_s|\right){\rm d}s,\ \ t\in\T.\vspace{0.1cm}
$$
Applying It\^{o}-Tanaka's formula to the process $\tilde\varphi(s, \Delta_\theta^n  U_s; 1)$ and using \eqref{eq:6.7}, \eqref{eq:6.9} and \eqref{eq:2.18} in \cref{Pro:2.1}, a similar computation to that in the proof of \cref{Pro:3.3} yields the existence of a positive constant $\bar\delta>0$ depending only on $\alpha$ such that
\begin{equation}\label{eq:6.10}
\begin{array}{lll}
{\rm d}\tilde\varphi(s,\Delta_\theta^n  U_s;1)& \geq & \Dis -\tilde\varphi_x(s,\Delta_\theta^n  U_s;1)|D^n_\theta(s)|{\rm d}s+ \frac{\bar\delta}{2}{\bf 1}_{\delta_\theta^n  U_s>0} |\delta_\theta^n  V_s|^2 {\rm d}s\vspace{0.2cm}\\
&&\Dis+\tilde\varphi_x(s,\Delta_\theta^n  U_s;1){\bf 1}_{\delta_\theta^n  U_s>0} \delta_\theta^n  V_s \cdot {\rm d}B_s,\ \ s\in \T.\vspace{0.1cm}
\end{array}
\end{equation}
Note from \eqref{eq:2.5} that for each $s\in \T$ and $x\geq 0$, we have $\tilde\varphi_x(s,x;1)\leq K \tilde\varphi(s,x;1)$ for some positive constant $K$ depending only on $(\alpha,\gamma,T)$. It follows from the BDG inequality, Young's inequality, \eqref{eq:6.1} and \eqref{eq:6.6} that the process
$$
\left(\int_0^t \tilde\varphi_x(s,\Delta_\theta^n  U_s;1){\bf 1}_{\delta_\theta^n  U_s>0} \delta_\theta^n  V_s \cdot {\rm d}B_s\right)_{t\in\T}
$$
is a uniformly integrable martingale. Then, from \eqref{eq:6.10} we know that
$$
\begin{array}{ll}
& \Dis \tilde\varphi(t,\Delta_\theta^n  U_s;1)+\frac{\bar\delta }{2}\E\left[\left.\int_t^T {\bf 1}_{\delta_\theta^n  U_s>0} |\delta_\theta^n  V_s|^2 {\rm d}s\right|\F_t\right]\vspace{0.2cm}\\
\leq & \Dis \E\left[\left.\tilde\varphi(T,\Delta_\theta^n  U_T;1)\right|\F_t\right] +K \E\left[\left.\int_t^T \tilde\varphi(s,\Delta_\theta^n  U_s;1)|D^n_\theta(s)|{\rm d}s\right|\F_t\right],\ \ t\in \T.
\end{array}
$$
Furthermore, by the definitions of functions $\tilde\varphi$ and $\psi$ together with $\Delta_\theta^n  U_s$ we can conclude that there exists a positive constant $\bar K$ depending only on $(\alpha,\gamma,T,k)$ such that $\ps$,
$$
\begin{array}{ll}
&\Dis \psi\left(\left(\delta_\theta^n  U_t\right)^+,\ 1\right)+\frac{\bar\delta }{2}\E\left[\left.\int_t^T {\bf 1}_{\delta_\theta^n  U_s>0} |\delta_\theta^n  V_s|^2 {\rm d}s\right|\F_t\right]\vspace{0.2cm}\\
\leq &\Dis \bar K \E\left[\left.\psi\left(\Delta_\theta^n  U_T,\  \bar K\right)+ \int_0^T \psi\left(\Delta_\theta^n  U_s,\ \bar K\right)|D^n_\theta(s)|{\rm d}s \right|\F_t\right],\ \ t\in\T.
\end{array}
$$
Consequently, for each $n\geq 1$, $\theta\in (0,1)$ and $t\in\T$, we have
\begin{equation}\label{eq:6.11}
\begin{array}{ll}
&\Dis (Y^n_t-\theta Y_t)^+\vspace{0.1cm}\\
\leq &\Dis  (1-\theta)\left(\bar K\E\left[\left.\psi\left(\Delta_\theta^n  U_T,\  \bar K\right)+ \int_0^T \psi\left(\Delta_\theta^n  U_s,\ \bar K\right)|D^n_\theta(s)|{\rm d}s \right|\F_t\right]\right)^{\alpha^*\over 2}.
\end{array}\vspace{0.2cm}
\end{equation}

On the other hand, for each fixed $\theta\in (0,1)$, we define
$$
\bar\delta_\theta^n  U_\cdot:=\frac{Y_\cdot-\theta Y^n_\cdot}{1-\theta}\ \  {\rm and} \ \ \bar\delta_\theta^n  V_\cdot:=\frac{Z_\cdot-\theta Z^n_\cdot}{1-\theta}.
$$
Then the pair $(\bar\delta_\theta^n  U_\cdot,\bar\delta_\theta^n  V_\cdot)$ verifies the following BSDE:
$$
\bar\delta_\theta^n  U_t=\bar\delta_\theta^n  U_T +\int_t^T \bar\delta_\theta^n g (s,\bar\delta_\theta^n  U_s,\bar\delta_\theta^n  V_s) {\rm d}s-\int_t^T \bar\delta_\theta^n  V_s \cdot {\rm d}B_s, \ \ \ \ t\in\T,
$$
where $\ass$, for each $(y,z)\in \R\times\R^d$,
$$
\bar\delta_\theta^n g(s,y,z):=\Dis \frac{1}{1-\theta}\left(g^n(s,(1-\theta)y+\theta Y^n_s,(1-\theta)z+\theta Z^n_s)-\theta g^n(s, Y^n_s, Z^n_s)\right)+\bar D^n_\theta(s)
$$
with
$$
\bar D^n_\theta(s):=\frac{1}{1-\theta}\left(g(s,Y_s, Z_s) -g^n(s,Y_s, Z_s)\right).
$$
Since \eqref{eq:5.1} holds for $g^n$, it follows that $\ass$, for each $(y,z)\in \R\times \R^d$,
$$
\delta_\theta^n g(s,y,z){\bf 1}_{y>0}\leq f^n_s+k|Y^n_s|+|\bar D^n_\theta(s)|+\gamma |z|^\alpha.
$$
Then, let us denote
$$
\bar \Delta_\theta^n  U_t:=(\bar \delta_\theta^n  U_t)^+ +\int_0^t \left(f^n_s+k|Y^n_s|\right){\rm d}s,\ \ t\in\T.
$$
A similar computation as that from inequality \eqref{eq:6.10} to inequality \eqref{eq:6.11}
yields that for each $n\geq 1$, $t\in \T$ and $\theta\in (0,1)$,
\begin{equation}\label{eq:6.12}
\begin{array}{ll}
&\Dis (Y_t-\theta Y^n_t)^+ \vspace{0.1cm}\\
\leq & \Dis (1-\theta)\left(\bar K\E\left[\left.\psi\left(\bar\Delta_\theta^n  U_T,\  \bar K\right)+ \int_0^T \psi\left(\bar \Delta_\theta^n  U_s,\ \bar K\right)|\bar D^n_\theta(s)|{\rm d}s \right|\F_t\right]\right)^{\alpha^*\over 2}.\vspace{0.3cm}
\end{array}
\end{equation}

In the sequel, combining \eqref{eq:6.11} and \eqref{eq:6.12} together with inequalities
$$
Y^n_t-Y_t \leq (Y^n_t-\theta Y_t)^+ +(1-\theta)|Y_t|\ \ {\rm and}\ \ Y_t-Y^n_t \leq (Y_t-\theta Y^n_t)^+ +(1-\theta)|Y^n_t|,
$$
we can deduce that for each $n\geq 1$, $\theta\in (0,1)$ and $t\in\T$,
\begin{equation}\label{eq:6.13}
\begin{array}{l}
\Dis |Y^n_t-Y_t|\leq (1-\theta)(|Y_t|+|Y^n_t|)+4(1-\theta)\left(\bar K\E\left[\left.\psi\left(X^n (\theta)+G^n_T,\ \bar K\right)\right|\F_t\right] \right)^{\alpha^*\over 2}\vspace{0.3cm}\\
\hspace*{-0.2cm}\Dis +\frac{4}{(1-\theta)^{\alpha^*-2\over 2}}\left\{\bar K\E\left[\left.\psi\left(H^n (\theta)+G^n_T,\ \bar K\right)\int_0^T \left| g^n(s,Y_s, Z_s)-g(s,Y_s, Z_s)\right| {\rm d}s \right|\F_t\right]\right\}^{\alpha^*\over 2},\vspace{0.1cm}
\end{array}
\end{equation}
where
$$
X^n (\theta):=\frac{|\xi-\theta \xi^n| \vee |\xi^n-\theta \xi|}{1-\theta},\ \ H^n (\theta):=\frac{\sup_{t\in\T}\left(|Y_t|+|Y^n_t|\right)}{1-\theta}
$$
and
$$
G^n_T:=\int_0^T \left(f^n_s+k|Y^n_s|+k|Y_s|\right){\rm d}s.\vspace{0.1cm}
$$
Now, let us fix $\eps>0$. It follows from \eqref{eq:6.13} and Doob's maximal inequality on martingale that
$$
\begin{array}{ll}
&\Dis \mathbb{P}\left(\sup_{t\in\T} |Y^n_t-Y_t|>\eps\right)\vspace{0.3cm}\\
\leq &\Dis \frac{3(1-\theta)}{\eps}\E\left[\sup_{t\in \T}(|Y^n_t|+|Y_t|)\right]+\left(\frac{12(1-\theta)}{\eps}\right)^{2\over \alpha^*}\bar K\E\left[\psi\left(X^n (\theta),\ \bar K\right)\psi\left(G^n_T,\ \bar K\right)\right]\vspace{0.3cm}\\
 &\Dis + \frac{\bar K }{(1-\theta)^{\alpha^*-2\over \alpha^*}}\left(\frac{12}{\eps}\right)^{2\over \alpha^*}\E\left[\psi\left(H^n (\theta),\ \bar K\right)\psi\left(G^n_T,\ \bar K\right)\int_0^T \left| g^n(s,Y_s, Z_s)-g(s,Y_s, Z_s)\right| {\rm d}s \right].\vspace{0.1cm}
\end{array}
$$
Observe from the inequalities \eqref{eq:6.1} and \eqref{eq:6.6} that the sequences
$$
\left(\sup_{t\in \T}(|Y^n_t|+|Y_t|)\right)_{n\geq 1}, \ \ \left(\psi\left(G^n_T,\ \bar K\right)\right)_{n\geq 1}\ \  {\rm and}\ \  \left(\psi\left(H^n (\theta),\ \bar K\right)\right)_{n\geq 1}
$$
are bounded in all $L^p$ spaces. From the previous inequality together with H\"{o}lder's inequality we deduce that there exist a universal constant $C>0$ and a constant $C(\theta,q)>0$ depending only on $\theta$ and $q$ such that
\begin{equation}\label{eq:6.14}
\begin{array}{ll}
&\Dis \mathbb{P}\left(\sup_{t\in\T} |Y^n_t-Y_t|>\eps\right)\vspace{0.3cm}\\
\leq &\Dis \frac{3(1-\theta)}{\eps}C+\left(\frac{12(1-\theta)}{\eps}\right)^{2\over \alpha^*}\bar K C \left(\E\left[\psi\left(X^n (\theta),\ 2\bar K\right)\right]\right)^{1/2}\vspace{0.3cm}\\
&\Dis + \frac{\bar K C(\theta,q) }{(1-\theta)^{\alpha^*-2\over \alpha^*}}\left(\frac{12}{\eps}\right)^{2\over \alpha^*}\left(\E\left[\left(\int_0^T \left| g^n(s,Y_s, Z_s)-g(s,Y_s, Z_s)\right| {\rm d}s\right)^q\right]\right)^{1\over q}.\vspace{0.2cm}
\end{array}
\end{equation}
From inequality \eqref{eq:6.1} and the fact that $\ps$, $\xi^n\To \xi$, it follows
that as $n$ goes to $\infty$, $\E\left[\psi\left(X^n (\theta),\ 2\bar K\right)\right]$ converges to $\E\left[\psi\left(|\xi|,\ 2\bar K\right)\right]$ for each $\theta\in (0,1)$. Thus, in view of \eqref{eq:6.2}, sending first $n\To \infty$ and then $\theta\To 1$ in inequality \eqref{eq:6.14} yields that $\sup_{t\in\T}|Y^n_t-Y_t|$ converges to $0$ in probability, and the conclusion \eqref{eq:6.3} follows due to the inequality \eqref{eq:6.6}. \vspace{0.2cm}

Finally, let \eqref{eq:6.4} be further satisfied, and we show that \eqref{eq:6.5} holds. In fact, by It\^{o}'s formula we get that for each $n\geq 1$,
\begin{equation}\label{eq:6.15}
\begin{array}{ll}
&\Dis \E\left[\int_0^T |Z^n_s-Z_s|^2 {\rm d}s\right]\vspace{0.3cm}\\
\leq &\Dis \E\left[|\xi^n-\xi|^2+2\sup_{t\in\T}|Y^n_t-Y_t| \int_0^T \left| g^n(s,Y^n_s, Z^n_s)-g(s,Y_s, Z_s)\right| {\rm d}s \right].
\end{array}
\end{equation}
On the other hand, it follows from \ref{H1"} with \eqref{eq:6.4} as well as \eqref{eq:6.1} and \eqref{eq:6.6} that
\begin{equation}\label{eq:6.16}
\sup_{n\geq 1}\E\left[\left(\int_0^T \left| g^n(s,Y^n_s, Z^n_s)-g(s,Y_s, Z_s)\right| {\rm d}s\right)^2\right]<+\infty.
\end{equation}
Then, by virtue of H\"{o}lder's inequality, the desired conclusion \eqref{eq:6.5} follows from \eqref{eq:6.15}, \eqref{eq:6.16}, \eqref{eq:6.3} and \eqref{eq:6.6}. The proof is then complete.
\end{proof}

\begin{rmk}\label{rmk:6.2}
We note that if assumption \ref{H1"} for $g$ and $g^n$ in \cref{thm:6.1} is respectively replaced with the stronger assumption \ref{H1} with the process $f_t$ instead of $|g(t,0,0)|$ and $|g^n(t,0,0)|$, and assumption \eqref{eq:6.2} is replaced with the following assumption: $\as$, for each $(y,z)\in \R\times\R^d$, $g^n(t,y,z)\To g(t,y,z)$, then the conclusions \eqref{eq:6.3}  and \eqref{eq:6.5} of \cref{thm:6.1} still hold. Indeed, it is easy to check that these two assumptions together with \eqref{eq:6.1} can imply that \eqref{eq:6.2} holds for any $q>1$, and that \eqref{eq:6.4} holds for some $c>0$.
\end{rmk}

\section{Application to sub-quadratic PDEs}
\label{sec:7}
\setcounter{equation}{0}

In this section, we give an application of our results concerning BSDEs to PDEs which are sub-quadratic with respect to the gradient of the solution. More precisely, we will derive the nonlinear Feynman-Kac formula for these PDEs. Let us consider the following semilinear PDE
\begin{equation}\label{eq:7.1}
\partial_t u(t,x)+\mathcal{L} u(t,x)+g(t,x,u(t,x),\sigma^* \nabla_x u(t,x))=0,\ \ \ u(T,\cdot)=h(\cdot),
\end{equation}
where $\mathcal{L}$ is the infinitesimal generator of the diffusion solution $X^{t,x}_\cdot$ to the following SDE
\begin{equation}\label{eq:7.2}
X^{t,x}_s=x+\int_t^s b(r,X^{t,x}_r){\rm d}r+\int_t^s \sigma(r,X^{t,x}_r){\rm d}B_r,\ \ t\leq s\leq T,\ \ {\rm and}\ \ X^{t,x}_s=x,\ \ 0\leq s< t.
\end{equation}
The nonlinear Feynman-Kac formula consists in proving that the function defined by
\begin{equation}\label{eq:7.3}
\RE\ (t,x)\in \T\times \R^n,\ \ \ u(t,x):=Y^{t,x}_t,
\end{equation}
where, for each $(t_0,x_0)\in \T\times \R^n$, $(Y^{t_0,x_0}_\cdot,Z^{t_0,x_0}_\cdot)$ represents the solution to the BSDE
\begin{equation}\label{eq:7.4}
Y_t=h\left(X^{t_0,x_0}_T\right)+\int_t^T g(s,X^{t_0,x_0}_s,Y_s,Z_s){\rm d}s+\int_t^T Z_s\cdot {\rm d}B_s,\ \ t\in \T,
\end{equation}
is a solution, at least a viscosity solution, to PDE \eqref{eq:7.1}.

The objective of this section is to derive
the above probabilistic representation for the solution to PDE \eqref{eq:7.1} when the nonlinearity $g$ is sub-quadratic of $\alpha\in (1,2)$ order with respect to $\nabla_x u$ and when $h$ and $g$ have a power growth of $p<2/\alpha^*$ order with respect to $x$. Let us first recall the following definition of a viscosity solution to PDE \eqref{eq:7.1}.

\begin{dfn}\label{def:7.1}
A continuous function $u$ defined on $\T\times \R^n$ such that $u(T,\cdot)=h(\cdot)$ is said to be a viscosity super-solution (respectively sub-solution) to PDE \eqref{eq:7.1} if
$$
\partial_t u(t_0,x_0)+\mathcal{L} u(t_0,x_0)+g(t_0,x_0,u(t_0,x_0),\sigma^* \nabla_x \varphi(t_0,x_0))\leq 0\ \ \ ({\rm resp.}\ \ \geq 0)
$$
as soon as the function $u-\varphi$ has a local minimum (resp. maximum) at the point $(t_0,x_0)\in (0,T)\times \R^n$ where $\varphi$ is a smooth function. Moreover, a viscosity solution is both a viscosity super-solution and a viscosity sub-solution.
\end{dfn}

Let us now introduce our assumptions concerning the linear part of the PDE namely the coefficients of the diffusion.
\begin{enumerate}
\renewcommand{\theenumi}{(A1)}
\renewcommand{\labelenumi}{\theenumi}
\item\label{A1}
$b(t,x):\T\times \R^n\To \R^n$ and $\sigma(t,x):\T\times \R^n\To \R^{n\times d}$ are continuous functions and there exists a constant $K>0$ such that for each $(t,x,x')\in \T\times\R^n\times\R^n$,
$$
|b(t,0)|+|\sigma(t,x)|\leq K\vspace{-0.2cm}
$$
and
$$
|b(t,x)-b(t,x')|+|\sigma(t,x)-\sigma(t,x')|\leq K|x-x'|.\vspace{0.1cm}
$$
\end{enumerate}

Classical results on SDEs show that under the assumption \ref{A1}, for each $(t,x)\in \T\times \R^n$ the SDE \eqref{eq:7.2} admits a unique solution $X^{t,x}_\cdot\in \s^q$ for all $q\geq 1$. And, since $\sigma$ is  a bounded function, an argument in page 563 of \citet{BriandHu2008PTRF} has showed that for each $q\in [1,2)$ and $(t,x)\in \T\times \R^n$, we have
\begin{equation}\label{eq:7.5}
\RE\ \lambda>0,\ \ \ \E\left[\sup_{s\in [0,T]} \exp\left(\lambda |X^{t,x}_s|^q\right)\right]\leq C \exp(\lambda C |x|^q),\vspace{0.1cm}
\end{equation}
where the constant $C$ depends only on $(q,\lambda,T,K)$. Furthermore, we assume that the point sequence $\{(t_m,x_m)\}_{m=1}^\infty$ in the space $\T\times\R^n$ converges to a point $(t,x)\in \T\times\R^n$ as $m$ tends to $+\infty$. Classical results on SDEs show that
\begin{equation}\label{eq:7.6}
\RE\ \lambda>0,\ \ \ \lim\limits_{m\To \infty} \E\left[\sup\limits_{s\in \T}|X^{t_m,x_m}_s-X^{t,x}_s|^\lambda\right]=0.
\end{equation}
And, by a similar analysis as that in page 563 of \cite{BriandHu2008PTRF} we can also deduce that for each $q\in [1,2)$,
\begin{equation}\label{eq:7.7}
\RE\ \lambda>0,\ \ \ \E\left[\sup_{m\geq 1}\sup_{s\in [0,T]} \exp\left(\lambda |X^{t_m,x_m}_s|^q\right)\right]\leq C \exp(\lambda C |x|^q),
\end{equation}
where the constant $C$ is the same as in \eqref{eq:7.5}.

With these observations in hands, we can give our assumptions on the nonlinear term of the PDE, the generator $g$, and the terminal condition.
\begin{enumerate}
\renewcommand{\theenumi}{(A2)}
\renewcommand{\labelenumi}{\theenumi}
\item\label{A2} $g(t,x,y,z):\T\times \R^n\times\R\times\R^d\To \R$ and $h(x):\R^n\To \R$ are continuous functions and there exist three real constants $\alpha\in (1,2)$, $p\in [1,\alpha^*)$ and $k\geq 0$ such that for each $(t,x,y,z)\in \T\times \R^n\times \R\times\R^d$, $(y',z')\in \R\times\R^d$ and $\theta\in (0,1)$, it holds that
$$
{\rm sgn}(y)g(t,x,y,z)\leq k \left(1+|x|^p+|y|+|z|^\alpha\right),
$$
$$
|g(t,x,y,z)|+|h(x)|\leq k\left(1+|x|^p+\exp(k |y|^{2\over \alpha^*})+|z|^2\right)
$$
and
$$
  \begin{array}{ll}
  &\Dis {\bf 1}_{\{y-\theta y'>0\}}\left(g(t,x,y,z)-\theta g(t,x, y',z')\right)\vspace{0.2cm}\\
   \leq &\Dis (1-\theta) k \left(1+|x|^p+|y'|+\left|\frac{y-\theta y'}{1-\theta}\right|+\left|\frac{z-\theta z'}{1-\theta}\right|^\alpha\right)
   \end{array}
$$
   or
$$
  \begin{array}{ll}
  &\Dis -{\bf 1}_{\{y-\theta y'<0\}}\left(g(t,x,y,z)-\theta g(t,x, y',z')\right)\vspace{0.2cm}\\
   \leq &\Dis (1-\theta) k\left(1+|x|^p+|y'|+\left|\frac{y-\theta y'}{1-\theta}\right|+\left|\frac{z-\theta z'}{1-\theta}\right|^\alpha\right).\vspace{0.3cm}
   \end{array}
$$
\end{enumerate}

The following example shows that assumption \ref{A2} are more general than those used in some existing literature.

\begin{ex}
From \cref{pro:5.1} and \cref{rmk:5.2}, it is not difficult to verify that the assumption \ref{A2} holds for the following generator $g$ and  terminal function $h$:
$$
\begin{array}{l}
g(t,x,y,z):=1+|x|^p\sin|x|+y^{2m}{\rm 1}_{y\leq 0}+\sin y+|z|^{\alpha}+l(y)q(z),\vspace{0.2cm}\\
h(x):=|x|^p\cos|x|,\ \ \ (t,x,y,z)\in \T\times \R^n\times \R\times\R^d,
\end{array}\vspace{0.1cm}
$$
where $\alpha\in (1,2)$, $p\in [1,\alpha^*)$, $m$ is a positive integer, and the function $l(\cdot)$ and $q(\cdot)$ are defined in (iii) of \cref{pro:5.1}. It is clear that $g$ has a power growth in the state variables $(y,z)$ and it is non-Lipschitz continuous in $y$ and non-convex (non-concave) in $z$, and that both $g$ and $h$ have a power growth in the state variable $x$ and they are not uniformly continuous in $x$.
\end{ex}

In the sequel, due to $p\in [1,\alpha^*)$, it follows from \eqref{eq:7.5} that for each $(t_0,x_0)\in \T\times\R^n$ and each $\lambda>0$, we have
\begin{equation}\label{eq:7.8}
\Dis \E\left[\exp\left(\lambda \left(|X^{t_0,x_0}_T|^p\right)^{2\over \alpha^*}\right)\right] \leq \Dis  \E\left[\sup_{s\in [0,T]} \exp\left(\lambda |X^{t_0,x_0}_s|^{2p\over \alpha^*}\right)\right]\leq  \Dis \bar C \exp(\lambda \bar C |x_0|^{2p\over \alpha^*})<+\infty
\end{equation}
and
\begin{equation}\label{eq:7.9}
\begin{array}{lll}
\Dis \E\left[\exp\left(\lambda \left(\int_0^T |X^{t_0,x_0}_s|^p{\rm d}s\right)^{2\over \alpha^*}\right)\right] &\leq & \Dis  \E\left[\sup_{s\in [0,T]} \exp\left(\lambda T^{2\over \alpha^*} |X^{t_0,x_0}_s|^{2p\over \alpha^*}\right)\right]\vspace{0.2cm}\\
&\leq & \Dis \bar C \exp(\lambda T^{2\over \alpha^*}\bar C |x_0|^{2p\over \alpha^*})<+\infty,\vspace{0.1cm}
\end{array}
\end{equation}
where the constant $\bar C$ depends only on $(p,\alpha,\lambda,T,K)$. Then, the assumption \ref{A2} together with the inequalities \eqref{eq:7.8} and \eqref{eq:7.9} allows us to use \cref{thm:5.5} to construct a unique solution, $(Y^{t_0,x_0}_\cdot,Z^{t_0,x_0}_\cdot)$, to the BSDE \eqref{eq:7.4} such that
$$
\RE\ \lambda>0,\ \ \ \E\left[\exp\left\{\lambda\left(\sup_{t\in \T}|Y^{t_0,x_0}_t|\right)^{2\over \alpha^*}\right\}\right]<+\infty
$$
and $Z^{t_0,x_0}_\cdot\in \mcal^q$ for all $q\geq 1$. Furthermore, by a classical analysis we know that $u$ defined by the formula \eqref{eq:7.3} is a deterministic function.

Now we can state and prove the main result of this section.

\begin{thm}\label{thm:7.3}
Let the assumptions \ref{A1} and \ref{A2} hold. Then, the function $u$ defined in \eqref{eq:7.3} is continuous on $\T\times\R^n$ and there exists a constant $C>0$ such that
\begin{equation}\label{eq:7.10}
\RE\ (t,x)\in \T\times\R^n,\ \ \ |u(t,x)|\leq C(1+|x|^p).
\end{equation}
Moreover, $u$ is a viscosity solution to PDE \eqref{eq:7.1}.
\end{thm}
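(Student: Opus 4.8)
The plan is to prove the three assertions of \cref{thm:7.3} in order: the polynomial bound \eqref{eq:7.10}, the continuity of $u$, and finally the viscosity property. The substantive step is the second one, which I will obtain from the stability theorem \cref{thm:6.1}; the first is a direct consequence of the a priori estimates and the last is the classical probabilistic argument.

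\emph{Growth bound.} Fix $(t_0,x_0)$ and write $(Y,Z):=(Y^{t_0,x_0},Z^{t_0,x_0})$. By \ref{A2} the generator $(s,y,z)\mapsto g(s,X^{t_0,x_0}_s,y,z)$ satisfies \ref{H1"} (hence the hypotheses of \cref{thm:3.1} in the form allowed by \cref{rmk:3.5}) with constants depending only on $(\alpha,p,k)$ and with $f_s:=k(1+|X^{t_0,x_0}_s|^p)$, up to an additive constant, in place of $|g(\cdot,0,0)|$, while the terminal value is $h(X^{t_0,x_0}_T)$. Applying the a priori estimate behind \cref{thm:5.5} (i.e. \cref{Pro:3.3} and \cref{thm:3.1}) at time $t=t_0$, and using that $X^{t_0,x_0}_s=x_0$ for $s\le t_0$ so the conditioning on $\F_{t_0}$ is vacuous, I get $\exp(\eps|Y_{t_0}|^{2/\alpha^*})\le C\,\E[\exp(\mu(|h(X^{t_0,x_0}_T)|+\int_0^Tf_s\,{\rm d}s)^{2/\alpha^*})]$ for suitable $\eps,\mu,C>0$; bounding the right-hand side by means of $(a+b)^{2/\alpha^*}\le a^{2/\alpha^*}+b^{2/\alpha^*}$, the power growth of $h$ in \ref{A2}, H\"older's inequality and the exponential moment bounds \eqref{eq:7.8}--\eqref{eq:7.9} yields $\le C'\exp(C'|x_0|^{2p/\alpha^*})$; taking logarithms and then the $\alpha^*/2$-th power gives $|u(t_0,x_0)|=|Y_{t_0}|\le C(1+|x_0|^p)$ with $C$ independent of $(t_0,x_0)$, i.e. \eqref{eq:7.10}.

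\emph{Continuity.} Fix $(t,x)$ and a sequence $(t_m,x_m)\to(t,x)$, and view $(Y^{t_m,x_m},Z^{t_m,x_m})$ and $(Y^{t,x},Z^{t,x})$ as the solutions on $\T$ of the BSDEs with terminal values $h(X^{t_m,x_m}_T)$, $h(X^{t,x}_T)$ and generators $g^m(s,y,z):=g(s,X^{t_m,x_m}_s,y,z)$, $\bar g(s,y,z):=g(s,X^{t,x}_s,y,z)$. By \ref{A2}, \cref{pro:5.1} and \cref{rmk:5.2} these generators all satisfy \ref{H1"} and \ref{H2'} with the \emph{same} constants $(\alpha,p,k)$ and with processes $f^m_\cdot=k(1+|X^{t_m,x_m}_\cdot|^p)$, $f_\cdot=k(1+|X^{t,x}_\cdot|^p)$; by \eqref{eq:7.7}--\eqref{eq:7.9} the integrability condition \eqref{eq:6.1} holds, and by \eqref{eq:7.6} and continuity of $h$ one has $h(X^{t_m,x_m}_T)\to h(X^{t,x}_T)$ a.s. along a subsequence. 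It remains to check \eqref{eq:6.2}: since $g$ is continuous in $x$ and $\sup_s|X^{t_m,x_m}_s-X^{t,x}_s|\to0$ in probability by \eqref{eq:7.6}, one has $\int_0^T|g^m(s,Y^{t,x}_s,Z^{t,x}_s)-\bar g(s,Y^{t,x}_s,Z^{t,x}_s)|\,{\rm d}s\to0$ a.s. along a subsequence, and the growth bound in \ref{A2}, together with $\E[\exp(\lambda(\sup_s|Y^{t,x}_s|)^{2/\alpha^*})]<\infty$ and $Z^{t,x}\in\mcal^q$ for all $q$, supplies an $L^{q'}$ dominating random variable for some $q'>q>1$; hence \eqref{eq:6.2} holds by dominated convergence and a subsequence argument. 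Thus \cref{thm:6.1} applies; \eqref{eq:6.3} and the uniform moment bound \eqref{eq:6.6} give $\E[\sup_{s\in\T}|Y^{t_m,x_m}_s-Y^{t,x}_s|]\to0$, and since $u(t_m,x_m)=Y^{t_m,x_m}_{t_m}$ and $u(t,x)=Y^{t,x}_t$ are deterministic,
\[
|u(t_m,x_m)-u(t,x)|\le \E\big[\sup\nolimits_{s\in\T}|Y^{t_m,x_m}_s-Y^{t,x}_s|\big]+\E\big[|Y^{t,x}_{t_m}-Y^{t,x}_t|\big]\to0,
\]
the last term vanishing by path-continuity of $Y^{t,x}$, $t_m\to t$, and the uniform integrability coming from its exponential moment. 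Hence $u\in C(\T\times\R^n)$.

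\emph{Viscosity property and main obstacle.} Here I follow the classical scheme of \citet{PardouxPeng1990SCL,ElKarouiPengQuenez1997MF} and \citet{BriandHu2008PTRF}: the flow property of $X$ and the uniqueness in \cref{thm:5.5} give $Y^{t_0,x_0}_s=u(s,X^{t_0,x_0}_s)$, $\ps$, for $s\in[t_0,T]$. To prove the super-solution inequality, take $\varphi\in C^{1,2}$ with $u-\varphi$ having a local minimum $0$ at $(t_0,x_0)\in(0,T)\times\R^n$ (normalised so $u(t_0,x_0)=\varphi(t_0,x_0)$) and suppose, for contradiction, that $\partial_t\varphi(t_0,x_0)+\mathcal{L}\varphi(t_0,x_0)+g(t_0,x_0,u(t_0,x_0),\sigma^*\nabla_x\varphi(t_0,x_0))>0$; pick $\delta,\eta>0$ with $u\ge\varphi$ and $F:=\partial_t\varphi+\mathcal{L}\varphi+g(\cdot,\cdot,\varphi,\sigma^*\nabla_x\varphi)\ge\eta$ on $[t_0,t_0+\delta]\times\bar B(x_0,\delta)$, and let $\tau$ be the first exit time of $(s,X^{t_0,x_0}_s)$ from this cylinder, so $\tau>t_0$ a.s. On $[t_0,\tau]$ the processes $u(\cdot,X^{t_0,x_0}_\cdot)$, $\varphi(\cdot,X^{t_0,x_0}_\cdot)$ and $\sigma^*\nabla_x\varphi(\cdot,X^{t_0,x_0}_\cdot)$ are bounded, and It\^o's formula shows $\varphi(\cdot,X^{t_0,x_0}_\cdot)$ solves there the BSDE with terminal $\varphi(\tau,X^{t_0,x_0}_\tau)\le u(\tau,X^{t_0,x_0}_\tau)=Y^{t_0,x_0}_\tau$ and generator $g(s,X^{t_0,x_0}_s,y,z)-F(s,X^{t_0,x_0}_s)\le g(s,X^{t_0,x_0}_s,y,z)-\eta$; since $g(s,X^{t_0,x_0}_s,\cdot,\cdot)$ obeys \ref{H1"}--\ref{H2'} and the integrability hypothesis of \cref{thm:5.4} is trivial on this bounded stochastic interval, the comparison theorem \cref{thm:5.4}---in the strict form obtained by carrying the strictly positive gap $\eta$ and $\tau-t_0>0$ through its $\theta$-technique proof (cf. \citet{BriandHu2008PTRF})---forces $\varphi(t_0,x_0)<Y^{t_0,x_0}_{t_0}=u(t_0,x_0)=\varphi(t_0,x_0)$, which is impossible. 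The sub-solution inequality follows by the symmetric argument (local maximum, reversed strict inequality). I expect the main obstacle to be the continuity step, specifically extracting the $L^q$-convergence \eqref{eq:6.2} from the mere continuity of $g$ in $x$ and the uniform exponential/polynomial moment bounds on $X^{t_m,x_m}$ from \eqref{eq:7.6}--\eqref{eq:7.9}; the viscosity step, though it relies on a strict comparison, is routine once continuity, uniqueness and \cref{thm:5.4} are in hand.
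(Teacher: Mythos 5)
Your first two steps are essentially the paper's own argument and are fine: the bound \eqref{eq:7.10} is obtained exactly as in the paper from \cref{rmk:3.5}, the a priori estimates \eqref{eq:3.4}--\eqref{eq:3.5} and the moment bounds \eqref{eq:7.8}--\eqref{eq:7.9}, and the continuity of $u$ is obtained, as in the paper, by verifying \eqref{eq:6.1}--\eqref{eq:6.2} (via \eqref{eq:7.6}, \eqref{eq:7.7} and dominated convergence) and invoking the stability \cref{thm:6.1}; your subsequence argument and the splitting $|u(t_m,x_m)-u(t,x)|\le \E[\sup_s|Y^{t_m,x_m}_s-Y^{t,x}_s|]+\E[|Y^{t,x}_{t_m}-Y^{t,x}_t|]$ are legitimate.

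The viscosity step, however, has a genuine gap. Your contradiction argument needs a \emph{strict} comparison: knowing only $\varphi(t_0,x_0)\le Y^{t_0,x_0}_{t_0}=\varphi(t_0,x_0)$ yields no contradiction, so everything hinges on the claim that \cref{thm:5.4} can be upgraded, ``by carrying the gap $\eta$ and $\tau-t_0>0$ through the $\theta$-technique,'' to a strict inequality. The paper proves no such statement, and the $\theta$-technique does not deliver it: in the proofs of \cref{thm:4.2} and \cref{thm:5.4} the term $\frac{\theta}{1-\theta}\bigl(g(s,Y'_s,Z'_s)-g'(s,Y'_s,Z'_s)\bigr)$ is simply discarded (bounded above by $0$) in order to verify \ref{H1'} for $\delta_\theta g$, and the resulting estimate $(Y_t-\theta Y'_t)^+\le(1-\theta)(\cdots)^{\alpha^*/2}$ gives, after $\theta\To 1$, only the non-strict conclusion. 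In the classical Lipschitz setting strictness comes from linearizing the generator and writing the difference as an expectation under an equivalent measure of $\int \eta$; here $g$ is merely continuous in $(y,z)$ (non-Lipschitz, non-convex under \ref{H2'}), so no linearization is available, and quantifying the gap through the nonlinear change of variable $\psi$ is exactly the difficulty. (A secondary unproved point is the Markov identification $Y^{t_0,x_0}_s=u(s,X^{t_0,x_0}_s)$, which needs the flow property plus uniqueness, but that is routine compared with the strict comparison.) This is precisely why the paper takes a different route for this step: it approximates $g$ by the Lipschitz inf/sup-convolutions $g^{m,l}$ of \citet{LepeltierSanMartin1997SPL}, uses the classical nonlinear Feynman--Kac formula to see that $u^{m,l}$ is a viscosity solution, passes to the limit monotonically using the comparison and stability theorems, upgrades pointwise to locally uniform convergence by Dini's theorem, and concludes by the stability theorem for viscosity solutions in \citet{BardiCapuzzo-Dolcetta1997Book}. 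Either adopt that approximation scheme, or supply an actual proof of a strict comparison theorem under \ref{H1"}--\ref{H2'}; as written, the final step of your argument does not go through.
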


\begin{proof}
Let us first show that $u$ is a continuous function. Indeed, we assume that the point sequence $\{(t_m,x_m)\}_{m=1}^\infty$ in the space $\T\times\R^n$ converges to a point $(t,x)\in \T\times\R^n$ as $m$ goes to $+\infty$. From the continuity of function $h$ and inequality \eqref{eq:7.6} it follows that $\ps$,
\begin{equation}\label{eq:7.11}
\lim_{m\To\infty} h\left(X^{t_m,x_m}_T\right)=h\left(X^{t,x}_T\right).
\end{equation}
And, since $g$ is a continuous function and satisfies assumption \ref{A2}, by Lebesgue's dominated convergence theorem and inequalities \eqref{eq:7.6} and \eqref{eq:7.7} together with the integrability condition of the process $(Y^{t,x}_\cdot,Z^{t,x}_\cdot)$ we can derive that for each $q>1$,
\begin{equation}\label{eq:7.12}
\lim_{m\To\infty} \E\left[\left(\int_0^T \left|g(s,X^{t_m,x_m}_s,Y^{t,x}_s, Z^{t,x}_s)-g(s,X^{t,x}_s,Y^{t,x}_s,Z^{t,x}_s)\right|{\rm d}s\right)^q\right]=0.
\end{equation}
Furthermore, in view of the growth condition of function $h$ and inequality \eqref{eq:7.7}, a similar argument to \eqref{eq:7.8} and \eqref{eq:7.9} yields that for each $\lambda>0$,
\begin{equation}\label{eq:7.13}
\sup_{m\geq 1}\E\left[\exp\left\{\lambda \left(\left|h\left(X^{t_m,x_m}_T\right)\right|+\int_0^T |X^{t_m,x_m}_s|^p{\rm d}s\right)^{\frac{2}{\alpha^*}} \right\}\right]<+\infty.
\end{equation}
In view of \eqref{eq:7.11}-\eqref{eq:7.13} and \ref{A2}, using the stability theorem (\cref{thm:6.1}) leads to $\ps$,
$$
\lim_{m\To\infty} \sup_{s\in\T}\left|Y^{t_m,x_m}_s-Y^{t,x}_s\right|=0,
$$
which together with the continuity of $Y^{t,x}_\cdot$ with respect to the time variable yields that $u$ is a continuous function on $\T\times\R^n$.\vspace{0.2cm}

Secondly, in view of assumption \ref{A2} and \cref{rmk:3.5}, the inequality \eqref{eq:7.10} follows from inequalities \eqref{eq:7.8} and \eqref{eq:7.9} with the estimates \eqref{eq:3.4} and \eqref{eq:3.5} in \cref{thm:3.1}.\vspace{0.2cm}

Finally, we use a double approximation procedure and a stability result to prove that the function $u$ is a viscosity solution to PDE \eqref{eq:7.1}. For each $(t,x,y,z)\in \T\times\R^n\times\R\times\R^d$ and each pair of positive integers $m$ and $l$, we define
$$
\begin{array}{lll}
g^{m,l}(t,x,y,z)&:=& \Dis \inf\left\{g^+(t,x,y,z)+m|y-y'|+m|z-z'|,\ \ (y',z')\in \mathbb{Q}\times \mathbb{Q}^d \right\}\vspace{0.2cm}\\
&& \Dis -\inf\left\{g^-(t,x,y,z)+l|y-y'|+l|z-z'|,\ \ (y',z')\in \mathbb{Q}\times \mathbb{Q}^d \right\}.
\end{array}
$$
By \citet{LepeltierSanMartin1997SPL} it is well known that $g^{m,l}$ is uniformly Lipschitz continuous in $(y,z)$, $g^{m,l}$ converges decreasingly uniformly on compact sets to a limit $g^{m,\infty}$ as $l$ tends to $+\infty$, and $g^{m,\infty}$ converges increasingly uniformly on compact sets to the generator $g$ as $m$ tends to $+\infty$.
For $(t,x)\in\T\times \R^n$, let $(Y^{m,l,t,x}_\cdot, Z^{m,l,t,x}_\cdot)$ be the unique solution in $\s^2\times \mcal^2$ to the BSDE with the terminal condition $h\left(X^{t,x}_T\right)$ and the generator
$g^{m,l}\left(\cdot, X^{t,x}_\cdot, \cdot,\cdot\right)$. We denote $u^{m,l}(t,x):=Y^{m,l,t,x}_t$. Then, by the classical nonlinear Feynman-Kac formula (see, e.g. \citet{ElKarouiPengQuenez1997MF} and \citet{PardouxPeng1992LectureNotes}), $u^{m,l}(\cdot,\cdot)$ is a viscosity solution to the following PDE
$$
\partial_t u(t,x)+\mathcal{L} u(t,x)+g^{m,l}(t,x,u(t,x),\sigma^* \nabla_x u(t,x))=0,\ \ \ u(T,\cdot)=h(\cdot).
$$
Moreover, by virtue of the classical comparison theorem and the stability \cref{thm:6.1} we can derive that $u^{m,l}(\cdot,\cdot)$ is decreasing and converges pointwisely to a continuous function $u^{m,\infty}(\cdot,\cdot)$ as $l$ tends to $+\infty$, and $u^{m,\infty}(\cdot,\cdot)$ is increasing and converges pointwisely to the continuous function $u(\cdot,\cdot)$ as $m$ tends to $+\infty$. Dini's theorem implies that the convergence is also uniform on compact sets of $\T\times\R^n$. Then, we can apply the stability theorem 1.7 in Chapter 5 of \citet{BardiCapuzzo-Dolcetta1997Book} to show that $u$ is a viscosity solution to the PDE \eqref{eq:7.1}. The proof is then complete.
\end{proof}

\begin{rmk}\label{rmk:7.4}
When the generator $g$ does not depend on the variable $y$ and is convex or concave on the variable $z$, it can be shown that the function $u$ defined by the formula \eqref{eq:7.3} is the unique viscosity solution with following growth: $|u(t,x)|\leq C(1+|x|^{\alpha^*})$. This follows from
the uniqueness results in \citet{DaLioLey2006SIAM} concerning Bellman-Isaacs equation.
\end{rmk}

\vspace{0.2cm}



\setlength{\bibsep}{2pt}
\bibliographystyle{elsarticle-harv}

\end{document}